\newcommand{\bu}{\mathbf u}
\theoremstyle{plain}
\newtheorem{theorem}{Theorem}[section]
\newtheorem{lemma}[theorem]{Lemma}
\newtheorem{remark}{Remark}[section]
\numberwithin{equation}{section} \numberwithin{theorem}{section}
\numberwithin{remark}{section} \linespread{1.15}
\begin{document}

\title{Semigroup Well-posedness of A Linearized, Compressible Fluid with An
Elastic Boundary \thanks{%
The research of G. Avalos was partially supported by the NSF Grants
DMS-1211232 and DMS-1616425. The research of J.T. Webster was partially
supported by the NSF Grant DMS-1504697.}}
\author{George Avalos,\thanks{
University of Nebraska-Lincoln, gavalos@math.unl.edu} \and Pelin G. Geredeli,%
\thanks{%
Hacettepe University, Ankara, Turkey, and University of Nebraska-Lincoln,
pguven@hacettepe.edu.tr} \and Justin T. Webster, \thanks{%
University of Maryland, Baltimore County, Maryland, websterj@umbc.edu} }
\maketitle

\begin{abstract}
We address semigroup well-posedness of the fluid-structure interaction of a
linearized compressible, viscous fluid and an elastic plate (in the
absence of rotational inertia). Unlike existing work in the literature, we
linearize the compressible Navier-Stokes equations about an arbitrary state
(assuming the fluid is barotropic), and so the fluid PDE component of the
interaction will generally include a nontrivial ambient flow profile $%
\mathbf{U}$. The appearance of this term introduces new challenges at the
level of the stationary problem. In addition, the boundary of the fluid
domain is unavoidably Lipschitz, and so the well-posedness argument takes
into account the technical issues associated with obtaining necessary
boundary trace and elliptic regularity estimates. Much of the previous work
on flow-plate models was done via Galerkin-type constructions after
obtaining good a priori estimates on solutions (specifically \cite%
{Chu2013-comp}---the work most pertinent to ours here); in contrast, we
adopt here a Lumer-Phillips approach, with a view of associating solutions
of the fluid-structure dynamics with a $C_{0}$-semigroup $\left\{ e^{%
\mathcal{A}t}\right\} _{t\geq 0}$ on the natural finite energy space of
initial data. So, given this approach, the major challenge in our work becomes establishing of the maximality of the
operator $\mathcal{A}$ which models the fluid-structure dynamics. In sum:
our main result is semigroup well-posedness for the fully coupled
fluid-structure dynamics, under the assumption that the ambient flow field $%
\mathbf{U}\in \mathbf{H}^{3}(\mathcal{O})$ has zero normal component trace
on the boundary (a standard assumption with respect to the
literature). 
In the final sections we address
well-posedness of the system in the presence of the von Karman plate
nonlinearity, as well as the stationary problem associated with the
dynamics.

\vskip.3cm \noindent \emph{Keywords}: fluid-structure interaction,
compressible fluid, well-posedness, semigroup

\vskip.3cm \noindent \emph{AMS Mathematics Subject Classification 2010}:
34A12, 74F10, 35Q35, 76N10
\end{abstract}

\begin{center}
\textit{{In memory of Igor D. Chueshov} }
\end{center}

\section{Introduction}

In this work, we consider a linearized fluid-structure model with respect to
some reference state, including an arbitrary spatial vector field. The
coupled system here describes the interaction between a plate and a
flow of \textit{compressible} barotropic, viscous fluid. Such interactive
dynamics are crucially considered in the design of many engineering systems
(e.g., aircraft, engines, and bridges). The study of compressible flows (gas
dynamics) itself has implications to high-speed aircraft, jet engines,
rocket motors, hyperloops, high-speed entry into a planetary atmosphere, gas
pipelines, commercial applications (such as abrasive blasting), and many
other fields (see \cite{BA62,bolotin,dowell1}, for instance). In these
applications, the density of a gas may change significantly along a
streamline. \emph{Compressibility}---i.e., the fractional change in volume
of the fluid element per unit change in pressure---becomes important, for
instance, in flows for which 
\begin{equation*}
\text{Mach Number}=M\equiv \frac{\text{velocity}}{\text{local speed of sound}%
}>0.3.
\end{equation*}%
The cases $M<0.3$ and $0.3<M<0.8$ are subsonic/incompressible and
subsonic/compressible regimes, respectively. Compressible flows can be
either transonic $(0.8<M<1.2)$ or supersonic $(1.2<M<3.0)$. In supersonic
flows, pressure effects are only transported downstream; the upstream flow
is not affected by conditions downstream.

In the study of incompressible flows, the associated analysis typically
involves only two unknowns: pressure and velocity. These are usually found
by solving two equations that describe conservation of mass and linear
momentum, with the fluid density presumed to be constant. By contrast, in
compressible flow, the gas density and temperature are variables.
Consequently, the solution of compressible flow problems will require two
more equations: namely, an equation of state for the gas, and a conservation
of energy equation.\footnote{%
Throughout, we will assume the fluid is barotropic---the pressure depends
only on the density.} Moreover, the imposition of external forces to the
governing equations may not immediately result in a uniform flow throughout
the system. In particular, the fluid may compress in the vicinity of the
applied force; that is to say, the density may increase locally in response
to the given force.

The effects due to compressibility and viscosity on an (uncoupled) fluid
dynamics will have to be taken in account when subsequently considering the
mathematical properties of PDE's describing interactions of said fluid
dynamics with some given structure. In aeroelasticity, the compressible gas
is often assumed to be inviscid---i.e., viscosity-free---and the flow
irrotational (potential flow). These assumptions are often invoked in
practice, as they reduce the flow dynamics to a wave equation \cite{dcds,
dowell1} (and see Section \ref{techreview} below). However, there are situations
where viscous effects cannot be neglected, e.g., in the transonic region 
\cite{dowell1}. The \emph{mathematical} literature---especially in the last
20 years---on fluid-structure interactions across each of these fluid
regimes is quite vast. We will certainly not attempt here a general overview
of this literature, but in Section \ref{techreview} we will provide an
in-depth discussion of those key modern references that pertain to the present work. At this point, we mention the primary motivating reference, 
\cite{Chu2013-comp}: in this work, the author considers the dynamics of a
nonlinear plate, located on a flat portion of the boundary of a three
dimensional cavity, as it interacts with a compressible, barotropic
(linearized) fluid that fills the cavity. In the present work, we will
analyze a comparable setup, but with additional physical terms in the equations; the focus here
will be on establishing and describing the essential semigroup dynamics
which drives the coupled PDE model.

\begin{remark}
The accommodation of physically relevant nonlinearities---i.e., those seen
in \cite{Chu2013-comp, cr-full-karman, supersonic}---can be readily made
subsequent to the present analysis, which develops a \textquotedblleft
good\textquotedblright\ linear theory. Linearities that are amenable to such
treatment include those of Berger, Kirchhoff, or von Karman type, inasmuch
as they are \emph{locally Lipschitz} \cite{springer,pazy} on the plate's
natural energy space. As a key illustrative example, we include a discussion of the
well-posedness of this fluid-structure model in the presence of the von
Karman plate nonlinearity in Section \ref{nonlinear}.

In many cases, it is the addition of structural nonlinearity which
ultimately leads to global-in-time boundedness of corresponding trajectories 
\cite{conequil2,webster}. Accordingly, long-time behavior of nonlinear
dynamics will be considered in a forthcoming work.
\end{remark}

We also consider a Lipschitz geometry, as opposed to the common assumption 
\cite{Chu2013-comp,dV} that the domain is smooth. Given the transition
between the elastic and inelastic components of the boundary, a Lipschitz
boundary is surely more natural and physically relevant---see Figure 1
below---and also more amenable to pertinent generalizations, e.g., tubular
domains (finite or infinite) \cite{tube,ChuRyz2012-pois}. Distinguishing our work from \cite{Chu2013-comp}, we take the linearization
of the compressible Navier-Stokes equations about a rest state which has a 
\emph{nonzero ambient flow component}. Since this linearization process
produces some additional terms that depend on the ambient flow, previous
techniques to obtain the well-posedness result cannot be directly applied%
\footnote{%
In fact, the late author of \cite{Chu2013-comp}---to whom this work is
dedicated---suggested in personal correspondence the precise model (\ref%
{system1})--(\ref{IC_2}); \cite{Igor-note}. In this communication, he
remarked that the approaches utilized in \cite{Chu2013-comp} with $\mathbf{U}%
\equiv 0$ were not amenable to the problem studied with $\mathbf{U}\neq 
\mathbf{0}$. He noted that a semigroup approach might be fruitful, and this
comment provided an impetus for the present work.}. We note that the
resultant terms, due to the presence of the ambient flow, \textbf{do not represent
bounded perturbations of principal spatial operators.}  To obtain
the primary result we utilize a semigroup approach, invoking the well known
Lumer-Phillips theorem \cite[p.13]{pazy}.

We believe that the present treatment fits nicely within the context of the
recent work of I. Chueshov, where the interactive dynamics between fluid and
a plate (or shell) are considered from various points of view \cite%
{Chu2013-comp,Chu2013-inviscid,tube,cr-full-karman,ChuRyz2012-pois,ChuRyz2011}%
. Moreover, one can draw comparisons and contrasts between the
well-posedness analysis here and that in \cite{clark} and \cite{george1},
which deal with \emph{incompressible }fluid-structure interactions: in \cite%
{clark} and \cite{george1}, there is also a two dimensional elastic
structure existing on the boundary of a three dimensional domain, in which a
fluid evolves. However, the earlier well-posedness work \cite{clark}
requires an appropriate \emph{mixed variational}, \emph{Babu\u{s}ka-Brezzi}
formulation, which is nonstandard and instrinsic to the particular dynamics
under consideration (see e.g., Theorem 3.1.5 of \cite{kesavan}); whereas the
present effort combines the Lax-Milgram Theorem with a critical
well-posedness result for (static versions of) the pressure PDE component of
the fluid-structure system (the first equation in (\ref{system1}) and
Theorem \ref{dV} of the Appendix.) For fluid-structure well-posedness
studies that involve a three dimensional solid immersed in a three
dimensional fluid, and which utilize semigroup techniques, see \cite{T1},%
\cite{T2},\cite{dvorak}.)

Eventually, we are interested in learning if and how the presence of the
dissipating fluid dynamics affects the stability of the structure (as in,
e.g., \cite{delay,Chu2013-comp,ChuRyz2011,ChuRyz2012-pois},\cite{george2}).
In particular, for the \emph{linear} compressible fluid-structure
interaction PDE model, we are interested in strong/uniform stability
properties of the associated $C_{0}$-semigroup. On the other hand, if one
inserts nonlinearity into the structural component of the interaction, the
existence and nature of global attractors become the primary objects of
interest for the associated PDE dynamical system. Qualitative properties of
fluid-structure models (such as well-posedness and stability of solutions,
and the existence of compact global attractors) have been intensely
investigated by many authors over the past 30 years. For the PDE model under
consideration, (\ref{system1})--(\ref{IC_2}) below, issues of long-time
behavior of solutions are addressed in the forthcoming work \cite{preprint}.

The paper is organized as follows: In Section \ref{model} we describe the
PDE model and discuss our standing hypotheses. In Section \ref{results} we
provide a discussion of the principal dynamics operator (on the natural
space of finite energy), as well its domain; we then formally state the
semigroup generation result which immediately yields well-posedness of
fluid-structure model, in the sense of Hadamard. We also include in this
section the notion of \emph{energy balance} for semigroup solutions. Section %
\ref{techreview} provides an in-depth discussion of the key pertinent
references, and their relationship to the result presented here. Section \ref%
{proof} gives the proof of the main result via the Lumer-Phillips theorem:
namely, we establish dissipativity and maximality of a certain bounded
perturbation of the modeling fluid-structure operator. Section \ref{static}
gives a description of stationary solutions to the dynamics at hand, and
proves that the PDE system can be recovered (in some sense) from the
stationary variational problem. Section \ref%
{nonlinear} discusses the von Karman plate nonlinearities and the
associated nonlinear dynamic well-posedness and stationary results, treating the nonlinearity as a
locally-Lipschitz perturbation of the linear dynamics.  Lastly, the Appendix provides a proof of a
key technical lemma on the well-posedness of solutions to the stationary
version of the decoupled pressure PDE component in (\ref{system1})--(\ref%
{IC_2}) below.

\subsection{Notation}

For the remainder of the text we write $\mathbf{x}$ for $(x_{1},x_{2},x_{3})%
\in \mathbb{R}_{+}^{3}$ or $(x_{1},x_{2})\in \Omega \subset \mathbb{R}%
_{\{(x_{1},x_{2})\}}^{2}$, as dictated by context. For a given domain $D$,
its associated $L^{2}(D)$ will be denoted as $||\cdot ||_D$ (or simply $%
||\cdot||$ when the context is clear). The symbols $\mathbf{n}$ and $%
\boldsymbol{\tau }$ will be used to denote, respectively, the unit external
normal and tangent vectors to $\mathcal{O}$. Inner products in $L^{2}(%
\mathcal{O})$ or $\mathbf{L}^{2}(\mathcal{O})$ are written $(\cdot ,\cdot)_{%
\mathcal{O}}$ (or simply $(\cdot ,\cdot)$ when the context is clear), while
inner products $L^{2}(\partial \mathcal{O})$ are written $\langle \cdot
,\cdot \rangle$. We will also denote pertinent duality pairings as $%
\left\langle \cdot ,\cdot \right\rangle _{X\times X^{\prime }}$, for a given
Hilbert space $X$. The space $H^{s}(D)$ will denote the Sobolev space of
order $s$, defined on a domain $D$, and $H_{0}^{s}(D)$ denotes the closure
of $C_{0}^{\infty }(D)$ in the $H^{s}(D)$-norm $\Vert \cdot \Vert
_{H^{s}(D)} $ or $\Vert \cdot \Vert _{s,D}$. We make use of the standard
notation for the boundary trace of functions defined on $\mathcal{O}$, which
are sufficently smooth: i.e., for a scalar function $\phi \in H^{s}(\mathcal{%
O})$, $\frac{1}{2}<s<\frac{3}{2}$, $\gamma (\phi )=\phi \big|_{\partial 
\mathcal{O}},$ a well-defined and surjective mapping on this range of $s$,
owing to the Sobolev Trace Theorem on Lipschitz domains (see e.g., \cite%
{necas}, or Theorem 3.38 of \cite{Mc}).

\section{PDE Model}

\label{model}

Let $\mathcal{O}\subset \mathbb{R}^{3}$ be a \emph{bounded} and \emph{convex 
}fluid domain (and so has Lipschitz boundary $\partial \mathcal{O}$; see
e.g., Corollary 1.2.2.3 of \cite{grisvard}). The boundary decomposes into
two pieces $\overline{S}$ and $\overline{\Omega }$ where $\partial \mathcal{O%
}=\overline{S}\cup \overline{\Omega }$, with $S\cap \Omega =\emptyset $. We
consider $S$ to be the solid boundary, with no interactive dynamics, and $%
\Omega $ to be the equilibrium position of the elastic domain, upon which
the interactive dynamics takes place. We also assume that: (i) the active
component $\Omega \subset \mathbb{R}^{2}$ is flat, w\i th L\i psch\i tz
boundary, and embedded in the $x_{1}-x_{2}$ plane; (ii) the inactive
component $S$ l\i es below the $x_{1}-x_{2}$ plane. This is to say, 
\begin{align}
\Omega \subset & ~\{\mathbf{x}=(x_{1},x_{2},0)\} \\
S\subset & ~\{\mathbf{x}=(x_{1},x_{2},x_{3})~:~x_{3}\leq 0\}.
\end{align}%
Letting $\mathbf{n}(\mathbf{x})$ denote the unit outward normal vector to $%
\partial \mathcal{O}$, we have $\left. \mathbf{n}\right\vert _{\Omega
}=(0,0,1).$ (See Figure 1.)

\begin{figure}[tbp]
\begin{center}
\includegraphics[scale=.8]{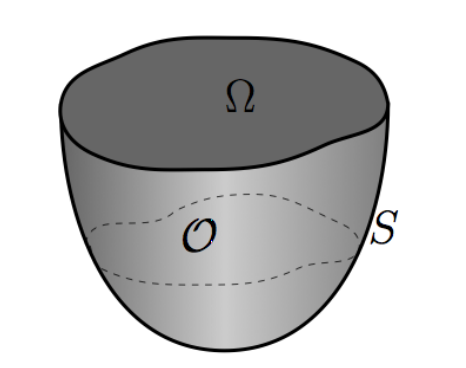}
\end{center}
\caption{The Fluid-Structure Geometry}
\end{figure}


We consider the compressible Navier-Stokes system \cite{chorin-marsden},
assuming the fluid is barotropic, and linearize the system with respect to
some reference rest state of the form $\left\{ p_{\ast },\mathbf{U},\varrho
_{\ast }\right\} $. The pressure and density components ${p_{\ast },\varrho
_{\ast }}$ are assumed to be scalar constants, and the arbitrary ambient
flow field $\mathbf{U}:\mathcal{O}\rightarrow \mathbb{R}^{3}$ is given by: 
\begin{equation}
\mathbf{U}%
(x_{1},x_{2},x_{3})=[U_{1}(x_{1},x_{2},x_{3}),U_{2}(x_{1},x_{2},x_{3}),U_{3}(x_{1},x_{2},x_{3})].
\label{flowfield}
\end{equation}%
Deleting non-critical lower order terms (see Remark \ref{delete} below), and
setting the pressure and density reference constants equal to unity, we
obtain the following \emph{perturbation equations}: 
\begin{align}
& \left\{ 
\begin{array}{l}
p_{t}+\mathbf{U}\cdot \nabla p+div~\mathbf{u}=0~\text{ in }~\mathcal{O}%
\times (0,\infty ) \\ 
\mathbf{u}_{t}+\mathbf{U}\cdot \nabla \mathbf{u}-div~\sigma (\mathbf{u}%
)+\eta \mathbf{u}+\nabla p=0~\text{ in }~\mathcal{O}\times (0,\infty ) \\ 
(\sigma (\mathbf{u})\mathbf{n}-p\mathbf{n})\cdot \boldsymbol{\tau }=0~\text{
on }~\partial \mathcal{O}\times (0,\infty ) \\ 
\mathbf{u}\cdot \mathbf{n}=0~\text{ on }~S\times (0,\infty ) \\ 
\mathbf{u}\cdot \mathbf{n}=w_{t}~\text{ on }~\Omega \times (0,\infty )%
\end{array}%
\right.  \label{system1} \\
&  \notag \\
& \left\{ 
\begin{array}{l}
w_{tt}+\Delta ^{2}w+\left[ 2\nu \partial _{x_{3}}(\mathbf{u})_{3}+\lambda 
\text{div}(\mathbf{u})-p\right] _{\Omega }=0~\text{ on }~\Omega \times
(0,\infty ) \\ 
w=\frac{\partial w}{\partial \nu }=0~\text{ on }~\partial \Omega \times
(0,\infty )%
\end{array}%
\right.  \label{IM2} \\
&  \notag \\
& 
\begin{array}{c}
\left[ p(0),\mathbf{u}(0),w(0),w_{t}(0)\right] =\left[ p_{0},\mathbf{u}%
_{0},w_{0},w_{1}\right] .%
\end{array}
\label{IC_2}
\end{align}%
Here, $p(t):\mathbb{R}^{3}\rightarrow \mathbb{R}$ and $\mathbf{u}(t):\mathbb{%
R}^{3}\rightarrow \mathbb{R}^{3}$ (pointwise in time) are given as the
pressure and the fluid velocity field, respectively. The quantity $\eta >0$
represents a drag force of the domain on the viscous fluid. In addition, the
quantity $\mathbf{\tau }$ in (\ref{system1}) is in the space $%
TH^{1/2}(\partial \mathcal{O)}$ of tangential vector fields of Sobolev index
1/2; that is,%
\begin{equation}
\mathbf{\tau }\in TH^{1/2}(\partial \mathcal{O)=}\{\mathbf{v}\in \mathbf{H}^{%
\frac{1}{2}}(\partial \mathcal{O})~:~\mathbf{v}\cdot \mathbf{n}=0~\text{ on }%
~\partial \mathcal{O}\}.\footnote{%
See e.g., p.846 of \cite{buffa2}.}  \label{TH}
\end{equation}

With respect to the \textquotedblleft ambient flow\textquotedblright\ field $%
\mathbf{U}$ appearing in (\ref{system1}), we define the space 
\begin{equation}
\mathbf{V}_{0}=\{\mathbf{v}\in \mathbf{H}^{1}(\mathcal{O})~:~\left. \mathbf{v%
}\right\vert _{\partial \mathcal{O}}\cdot \mathbf{n}=0~\text{ on }~\partial 
\mathcal{O}\};  \label{V_0}
\end{equation}%
and subsequently impose the standard assumption that 
\begin{equation}
\mathbf{U}\in \mathbf{V}_{0}\cap \mathbf{H}^{3}(\mathcal{O})  \label{min}
\end{equation}%
(see the analogous---and actually slightly stronger---specifications made on
ambient fields on p.529 of \cite{dV} and pp.102--103 of \cite{valli}).

\begin{remark}
As mentioned above, the presence of $\mathbf{U}$ in the modeling introduces
the term $\mathbf{U}\cdot \nabla p$ into the pressure equation, which {\bf
does not} represent a bounded perturbation of the dynamics.
\end{remark}

Given the \textit{Lam\'{e} Coefficients }$\lambda \geq 0$ and $\nu >0$, the 
\textit{stress tensor} $\sigma $ of the fluid is defined as 
\begin{equation*}
\sigma (\mathbf{\mu })=2\nu \epsilon (\mathbf{\mu })+\lambda \lbrack
I_{3}\cdot \epsilon (\mathbf{\mu })]I_{3},
\end{equation*}%
where the \textit{strain tensor }$\epsilon $ is given by 
\begin{equation*}
\epsilon _{ij}(\mu )=\dfrac{1}{2}\left( \frac{\partial \mu _{j}}{\partial
x_{i}}+\frac{\partial \mu _{i}}{\partial x_{j}}\right) \text{, \ }1\leq
i,j\leq 3
\end{equation*}%
(see \cite[p.129]{kesavan}). With this notation it is easy to see that 
\begin{equation*}
\text{div}~\sigma (\mathbf{\mu })=\nu \Delta \mathbf{\mu }+(\nu +\lambda
)\nabla \text{div}(\mathbf{\mu }),
\end{equation*}%
where $\lambda $ and $\nu $ are the non-negative viscosity coefficients.

The boundary conditions that are invoked in (\ref{system1}) for the fluid
PDE component are the so-called \emph{impermeability}-slip conditions \cite%
{bolotin,chorin-marsden}. Their physical interpretation is that no fluid
passes through the boundary (the normal component of the fluid field $%
\mathbf{u}$ on the active boundary portion $\Omega $ matches the plate
velocity $w_{t}$), and that there is no stress in the tangential direction $%
\tau $.
\begin{remark}
Other possible physically relevant boundary conditions have appeared in the
literature. We mention the \emph{Kutta-Joukowski} type condition \eqref{KJ} 
\cite{dcds}, as well as the \emph{adherence} condition \eqref{ad}. 
\begin{align}
\sigma (\mathbf{u})\mathbf{n}-p\mathbf{n}=\mathbf{0}~\text{ on }~S;& ~~~%
\mathbf{u}\cdot \mathbf{n}=w_{t}~\text{ on }~\Omega ;  \label{KJ} \\
\mathbf{u}=0~\text{ on }~S& ~~~\mathbf{u}\cdot \mathbf{n}=w_{t}~\text{ on }%
~\Omega.  \label{ad}
\end{align}
\end{remark}

Though the focus of this treatment is on the {\em linear dynamics} of the fluid-plate interaction, we do provide a brief discussion of nonlinearity in the model in Section \ref{nonlinear}. We now mention the principal nonlinear plate model of interest: the scalar von Karman plate. Writing the plate equation in \eqref{IM2} as 
\begin{equation}\label{IM2**}
w_{tt}+\Delta^2w+\left[ 2\nu \partial _{x_{3}}(\mathbf{u})_{3}+\lambda 
\text{div}(\mathbf{u})-p\right] _{\Omega }=f(w)~\text{ on }~\Omega \times
(0,\infty )
\end{equation} where we have
 $$f(w)=[w, v(w)+F_0],$$ where $F_0$ is a given function
  from $H^4(\Omega)$ and
the von Karman bracket $[u,v]$  is given by
\begin{equation*}\label{bracket}
[u,w] = \partial_{xx} u\cdot \partial_{yy} w +
\partial_{yy} u\cdot \partial_{xx} w -
2\cdot \partial_{xy} u\cdot \partial_{xy}
w,
\end{equation*} and
the Airy stress function $v(u,w) $ solves the following  elliptic
problem
\begin{equation}\label{airy-1}
\Delta^2 v(u,w)+[u,w] =0 ~~{\text in}~~  \Omega,\hskip.5cm \partial_{\nu} v(u,w) = v(u,w) =0 ~~{\text on}~~  \partial\Omega.
\end{equation}
Von  Karman equations are well known in nonlinear elasticity and
constitute a basic model describing nonlinear oscillations of a
plate accounting for  large deflections, see \cite{springer} and
references therein.
\begin{remark} In this paper we provide a discussion of the most physically relevant {\em large deflection} plate model. We do not fully discuss the breadth of nonlinear plate dynamics, as is done in \cite{Chu2013-comp}. However, the discussion we provide here is easily adapted to the other common plate nonlinearities of Berger or Kirchhoff type (see, for instance, \cite{supersonic} and \cite{Chueshov,gw}). 
\end{remark}

\begin{remark}
\label{delete} The above fluid equations in \eqref{system1} might be
referred to as the Oseen equations for viscous compressible barotropic
fluids. In the linearization procedure, without making additional
assumptions on $\mathbf{U}$, we obtain: 
\begin{align} \label{bar-model1-U}
& (\partial _{t}+\mathbf{U}\cdot \nabla )p+\mathrm{div\,}\,\mathbf{u}+%
\mathrm{div}(\mathbf{U})p=F_{1}(\mathbf{x})\quad \mathrm{in}~~\mathcal{O}%
\times \mathbb{R}_{+},   \\[2mm]
& (\partial _{t}+\mathbf{U}\cdot \nabla )\mathbf{u}-\nu \Delta \mathbf{u}%
-(\nu +{\lambda }){\nabla }\mathrm{div\,}\,\mathbf{u}+{\nabla }p+\nabla 
\mathbf{U}\cdot \mathbf{u}+(\mathbf{U}\cdot \nabla \mathbf{U})p=\mathbf{F}%
_{2}(\mathbf{x})\quad \mathrm{in}~\mathcal{O}\times \mathbb{R}_{+},
\label{flu-eq1U}
\end{align}
for a prescribed scalar function $F_{1}$ and vector field $\mathbf{F}_{2}$.
In our analysis we retain only the principal mathematical terms in %
\eqref{system1}--\eqref{IC_2}, as the others may be viewed as zeroth order
perturbations, and handled in a standard fashion.
\end{remark}

\section{Main Results}

\label{results} We are primarily nterested in Hadamard well-posedness of the
linearized coupled system given in (\ref{system1})--(\ref{IC_2}).
Specifically, we will ascertain well-posedness of the PDE model (\ref%
{system1})--(\ref{IC_2}) for arbitrary initial data in the natural space of
finite energy. To accomplish this, we will adopt a semigroup approach;
namely, we will pose and validate an explicit semigroup generator
representation for the fluid-structure dynamics (\ref{system1})--(\ref{IC_2}%
).

With respect to the coupled PDE system (\ref{system1})--(\ref{IC_2}), the
associated space of well-posedness will be 
\begin{equation}
\mathcal{H}\equiv L^{2}(\mathcal{O})\times \mathbf{L}^{2}(\mathcal{O})\times
H_{0}^{2}(\Omega )\times L^{2}(\Omega ).  \label{H}
\end{equation}%
$\mathcal{H}$ is a Hilbert space, topologized by the following
inner-product: 
\begin{equation}
(\mathbf{y}_{1},\mathbf{y}_{2})_{\mathcal{H}}=(p_{1},p_{2})_{L^{2}(\mathcal{O%
})}+(\mathbf{u}_{1},\mathbf{u}_{2})_{\mathbf{L}^{2}(\mathcal{O})}+(\Delta
w_{1},\Delta w_{2})_{L^{2}(\Omega )}+(v_{1},v_{2})_{L^{2}(\Omega )}
\label{innerp}
\end{equation}%
for any $\mathbf{y}_{i}=(p_{i},\mathbf{u}_{i},w_{i},v_{i})\in \mathcal{H}%
,~i=1,2.$

In what follows, we consider the linear operator $\mathcal{A}:D(\mathcal{A}%
)\subset \mathcal{H}\rightarrow \mathcal{H}$, which expresses the
compressible fluid-structure PDE system (\ref{system1})--(\ref{IC_2}) as the
abstract ODE: 
\begin{eqnarray}
\dfrac{d}{dt}%
\begin{bmatrix}
p \\ 
\mathbf{u} \\ 
w \\ 
w_{t}%
\end{bmatrix}
&=&\mathcal{A}%
\begin{bmatrix}
p \\ 
\mathbf{u} \\ 
w \\ 
w_{t}%
\end{bmatrix}%
;  \notag \\
\lbrack p(0),\mathbf{u}(0),w(0),w_{t}(0)] &=&[p_{0},\mathbf{u}%
_{0},w_{0},w_{1}].  \label{ODE}
\end{eqnarray}

\noindent To wit, 
\begin{equation}
\mathcal{A}=\left[ 
\begin{array}{cccc}
-\mathbf{U}\mathbb{\cdot }\nabla (\cdot ) & -\func{div}(\cdot ) & 0 & 0 \\ 
-\mathbb{\nabla (\cdot )} & \func{div}\sigma (\cdot )-\eta I-\mathbf{U}%
\mathbb{\cdot \nabla (\cdot )} & 0 & 0 \\ 
0 & 0 & 0 & I \\ 
\left. \left[ \cdot \right] \right\vert _{\Omega } & -\left[ 2\nu \partial
_{x_{3}}(\cdot )_{3}+\lambda \func{div}(\cdot )\right] _{\Omega } & -\Delta
^{2} & 0%
\end{array}%
\right] .  \label{AAA}
\end{equation}

\noindent Here, the domain $D(\mathcal{A})$ is given as 
\begin{equation*}
D(\mathcal{A})=\{(p_{0},\mathbf{u}_{0},w_{1},w_{2})\in L^{2}(\mathcal{O}%
)\times \mathbf{H}^{1}(\mathcal{O})\times H_{0}^{2}(\Omega )\times
H_{0}^{2}(\Omega )~:~(i)\text{--}(v)~~\text{hold below}\},
\end{equation*}%
where

\begin{enumerate}
\item[(A.i)] $\mathbf{U}\cdot \nabla p_{0}\in L^{2}(\mathcal{O})$

\item[(A.ii)] $\text{div}~\sigma (\mathbf{u}_{0})-\nabla p_{0}\in L^{2}(%
\mathcal{O})$

\item[(A.iii)] $-\Delta ^{2}w_{0}-\left[ 2\nu \partial _{x_{3}}(\mathbf{u}%
_{0})_{3}+\lambda \text{div}(\mathbf{u}_{0})\right] _{\Omega }+\left.
p_{0}\right\vert _{\Omega }\in L^{2}(\Omega )$

\item[(A.iv)] $\left( \sigma (\mathbf{u}_{0})\mathbf{n}-p_{0}\mathbf{n}%
\right) \bot ~TH^{1/2}(\partial \mathcal{O})$. That is, 
\begin{equation*}
\left\langle \sigma (\mathbf{u}_{0})\mathbf{n}-p_{0}\mathbf{n},\mathbf{\tau }%
\right\rangle _{\mathbf{H}^{-\frac{1}{2}}(\partial \mathcal{O})\times 
\mathbf{H}^{\frac{1}{2}}(\partial \mathcal{O})}=0\text{ \ for all }\mathbf{%
\tau }\in TH^{1/2}(\partial \mathcal{O}).
\end{equation*}

\item[(A.v)] $\mathbf{u}_{0}=\mathbf{\mu }_{0}+\widetilde{\mathbf{\mu }}_{0}$%
, where $\mathbf{\mu }_{0}\in \mathbf{V}_{0}$ and $\widetilde{\mathbf{\mu }}%
_{0}\in \mathbf{H}^{1}(\mathcal{O})$ satisfies\footnote{%
The existence of an $\mathbf{H}^{1}(\mathcal{O})$-function $\widetilde{%
\mathbf{\mu }}_{0}$ with such a boundary trace on Lipschitz domain $\mathcal{%
O}$ is assured; see e.g., Theorem 3.33 of \cite{Mc}, or see also the proof
of Lemma \ref{staticwellp} below.}%
\begin{equation*}
\left. \widetilde{\mathbf{\mu }}_{0}\right\vert _{\partial \mathcal{O}}=%
\begin{cases}
0 & ~\text{ on }~S \\ 
w_{2}\mathbf{n} & ~\text{ on}~\Omega%
\end{cases}%
\end{equation*}%
\noindent (and so $\left. \mathbf{\mu }_{0}\right\vert _{\partial \mathcal{O}%
}\in TH^{1/2}(\partial \mathcal{O})$).
\end{enumerate}

\bigskip

In the following theorem, we provide semigroup well-posedness for $\mathcal{A%
}:D(\mathcal{A})\in \mathcal{H}\rightarrow \mathcal{H}$, the proof of which
is based on the well known Lumer-Phillips Theorem.

\begin{theorem}
\label{wellp} The map $\left\{ p_{0},\mathbf{u}_{0};w_{0},w_{1}\right\}
\rightarrow \left\{ p(t),\mathbf{u}(t);w(t),w_{t}(t)\right\} $ defines a
strongly continuous semigroup $\{e^{\mathcal{A}t}\}$ on the space $\mathcal{H%
}$, and hence the system (\ref{system1})--(\ref{IC_2}) is well-posed (in the
sense of mild solutions---see Remark \ref{notion} below). In addition, the
semigroup enjoys the following estimate: 
\begin{equation}
\big|\big|e^{\mathcal{A}t}\big|\big|_{\mathcal{L}(\mathcal{H})}\leq \exp %
\Big(\dfrac{t}{2}||\text{div}(\mathbf{U})||_{\infty }\Big),~~\forall t>0.
\end{equation}
\end{theorem}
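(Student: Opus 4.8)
The plan is to apply the Lumer--Phillips theorem to a bounded perturbation of $\mathcal{A}$, since the energy estimate $\|e^{\mathcal{A}t}\| \le \exp\big(\tfrac{t}{2}\|\mathrm{div}(\mathbf{U})\|_\infty\big)$ already hints that $\mathcal{A}$ itself is not dissipative but $\mathcal{A} - \tfrac12\|\mathrm{div}(\mathbf{U})\|_\infty I$ (or a similar shift) should be. Concretely, I would first write down the formal energy identity: given $\mathbf{y}_0 = (p_0,\mathbf{u}_0,w_0,w_1) \in D(\mathcal{A})$, compute $(\mathcal{A}\mathbf{y}_0,\mathbf{y}_0)_{\mathcal{H}}$ using the inner product \eqref{innerp}. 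The key cancellations are the standard fluid-structure ones: the pressure-term coupling $(\mathrm{div}\,\mathbf{u}_0, p_0)$ against $-(\nabla p_0,\mathbf{u}_0)$ integrates by parts, the boundary terms it generates pair (via the impermeability condition $\mathbf{u}_0 \cdot \mathbf{n} = w_1$ on $\Omega$, $\mathbf{u}_0\cdot\mathbf{n}=0$ on $S$, and the free stress condition (A.iv)) against the plate coupling terms $-\big([2\nu\partial_{x_3}(\mathbf{u}_0)_3 + \lambda\,\mathrm{div}(\mathbf{u}_0) - p_0]\big|_\Omega, w_1\big)$, and the plate operator $-\Delta^2 w_0$ against $w_1$ cancels against $(\Delta w_0,\Delta w_1)$. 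What remains is the viscous dissipation $-(\sigma(\mathbf{u}_0),\epsilon(\mathbf{u}_0))$, which is $\le 0$ by Korn/coercivity of the stress form; the drag term $-\eta\|\mathbf{u}_0\|^2 \le 0$; and the transport terms $-(\mathbf{U}\cdot\nabla p_0, p_0)$ and $-(\mathbf{U}\cdot\nabla\mathbf{u}_0,\mathbf{u}_0)$.

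The transport terms are handled by integration by parts: $-(\mathbf{U}\cdot\nabla f, f) = \tfrac12(\mathrm{div}(\mathbf{U})f,f) - \tfrac12\langle (\mathbf{U}\cdot\mathbf{n}) f, f\rangle_{\partial\mathcal{O}}$, and the boundary term vanishes because $\mathbf{U} \in \mathbf{V}_0$, i.e. $\mathbf{U}\cdot\mathbf{n} = 0$ on $\partial\mathcal{O}$ (this is exactly where the standing hypothesis \eqref{min} is used). So $-(\mathbf{U}\cdot\nabla p_0,p_0) - (\mathbf{U}\cdot\nabla\mathbf{u}_0,\mathbf{u}_0) = \tfrac12(\mathrm{div}(\mathbf{U})(p_0^2 + |\mathbf{u}_0|^2)) \le \tfrac12\|\mathrm{div}(\mathbf{U})\|_\infty \|\mathbf{y}_0\|_{\mathcal{H}}^2$. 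Hence $\mathrm{Re}(\mathcal{A}\mathbf{y}_0,\mathbf{y}_0)_{\mathcal{H}} \le \tfrac12\|\mathrm{div}(\mathbf{U})\|_\infty\|\mathbf{y}_0\|_{\mathcal{H}}^2$, so $\widehat{\mathcal{A}} := \mathcal{A} - \tfrac12\|\mathrm{div}(\mathbf{U})\|_\infty I$ is dissipative; since $\tfrac12\|\mathrm{div}(\mathbf{U})\|_\infty I$ is bounded, it suffices to show $\widehat{\mathcal{A}}$ (equivalently, $\mathcal{A}$ shifted) generates a $C_0$-semigroup, and then $\|e^{\mathcal{A}t}\| = \|e^{\widehat{\mathcal{A}}t}\,e^{(t/2)\|\mathrm{div}(\mathbf{U})\|_\infty}\| \le e^{(t/2)\|\mathrm{div}(\mathbf{U})\|_\infty}$ gives the stated bound. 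I would also need to check dissipativity of $\widehat{\mathcal{A}}^*$ (or argue via a range condition directly) so that Lumer--Phillips applies — typically one shows $\mathrm{Range}(\lambda I - \widehat{\mathcal{A}}) = \mathcal{H}$ for some/all $\lambda > 0$.

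The main obstacle — and, per the introduction, the heart of the paper — is the maximality/range condition: showing that for suitable $\lambda$, given arbitrary $(f_1,\mathbf{f}_2,g_1,g_2) \in \mathcal{H}$, the static system $(\lambda + \tfrac12\|\mathrm{div}(\mathbf{U})\|_\infty)\mathbf{y} - \mathcal{A}\mathbf{y} = (f_1,\mathbf{f}_2,g_1,g_2)$ has a solution $\mathbf{y} \in D(\mathcal{A})$. My approach here would be: (1) use the first (pressure) equation, which reads $(\lambda + \mathbf{U}\cdot\nabla)p + \mathrm{div}\,\mathbf{u} = f_1$, to solve for $p$ in terms of $\mathbf{u}$ via the transport-type elliptic result quoted as Theorem \ref{dV} in the Appendix — this is the step requiring $\mathbf{U}\in\mathbf{H}^3(\mathcal{O})$ and the Lipschitz-domain trace theory, and it is precisely the novelty over \cite{Chu2013-comp} where $\mathbf{U}\equiv 0$ makes this step trivial; (2) substitute $p = p(\mathbf{u})$ into the momentum equation and the plate equation, reducing to a coupled elliptic problem for $(\mathbf{u},w)$; (3) set this up as a variational problem on $\mathbf{V}_0 \times H_0^2(\Omega)$ (incorporating the coupling $\mathbf{u}\cdot\mathbf{n} = \lambda w$ on $\Omega$ via a lifting $\widetilde{\boldsymbol{\mu}}_0$ as in (A.v), and the free-stress condition (A.iv) weakly) and apply Lax--Milgram, with coercivity coming from viscosity, the drag term $\eta$, the plate bending, and taking $\lambda$ large enough to dominate the indefinite contributions of the ambient-flow terms; (4) finally do elliptic regularity bootstrapping on a Lipschitz domain to verify the membership and boundary conditions (A.i)--(A.v) defining $D(\mathcal{A})$, and check $\mathbf{u}\in\mathbf{H}^1(\mathcal{O})$, $w\in H_0^2(\Omega)$. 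I expect steps (1) and (4) — the transport-elliptic pressure solve and the Lipschitz-boundary regularity/trace bookkeeping — to be where essentially all the real work lies; the dissipativity computation above is routine by comparison.
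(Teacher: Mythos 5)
Your proposal is correct and follows the same architecture as the paper: Lumer--Phillips applied to a bounded perturbation of $\mathcal{A}$, dissipativity via exactly the cancellations you list (with $\mathbf{U}\in \mathbf{V}_{0}$ killing the boundary terms produced by integrating the transport terms by parts), maximality resting on the transport solve of Theorem \ref{dV} for the pressure followed by Lax--Milgram for $\xi$ large, and finally the bounded-perturbation theorem to return to $\mathcal{A}$.

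The differences are organizational rather than substantive, but worth noting. First, the paper perturbs by the multiplication operator $\tfrac{1}{2}\mathrm{div}(\mathbf{U})$ acting only on the $(p,\mathbf{u})$ components (see (\ref{pertA})), rather than by the constant $\tfrac{1}{2}\Vert \mathrm{div}(\mathbf{U})\Vert _{\infty }I$; both are bounded perturbations yielding the stated exponential bound, and your commuting scalar shift in fact delivers the norm estimate most directly. Second, for maximality the paper does not assemble a single coupled variational problem on $\mathbf{V}_{0}\times H_{0}^{2}(\Omega )$: it first solves the fluid--pressure subsystem with prescribed plate-velocity boundary data $g$ (Lemma \ref{staticwellp}, via the decomposition $\mathbf{v}=\mathbf{u}+\widetilde{\mathbf{v}}_{0}$, the pressure maps $p[\mathbf{u}]$, $p[\widetilde{\mathbf{v}}_{0}]$, $p[p^{\ast }]$, and Lax--Milgram on $\mathbf{V}_{0}$ alone), and then composes the resulting solution maps into a second Lax--Milgram problem for $w_{1}$ in $H_{0}^{2}(\Omega )$ (the form $B$ in (\ref{AA})). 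Your monolithic coupled form should also work, with two points of emphasis: the indefinite term that forces $\xi$ large is chiefly the substituted-pressure coupling $-(p[\mathbf{u}],\mathrm{div}\,\boldsymbol{\psi })_{\mathcal{O}}$, controlled by the $C/\xi$ bound (\ref{first}) from Theorem \ref{dV} (whose hypothesis (\ref{v_a})(ii) is the other source of the largeness requirement in terms of $\mathbf{U}$), since the ambient-flow transport terms tested against $\mathbf{V}_{0}$ elements simply vanish by (\ref{there}); and there is no elliptic bootstrapping in your step (4)---on the Lipschitz domain no $H^{2}$ fluid regularity is available or needed, and conditions (A.i)--(A.v) are verified directly from the weak formulation, with the trace $\sigma (\mathbf{v})\mathbf{n}-p\mathbf{n}\in \mathbf{H}^{-\frac{1}{2}}(\partial \mathcal{O})$ obtained by a Green's-formula argument and the tangential condition (A.iv) then following from orthogonality to traces of $\mathbf{V}_{0}$ test functions.
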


\begin{remark}
\label{weaker}Given the existence of a semigroup $\{e^{\mathcal{A}t}\}$ for
the fluid-structure generator $\mathcal{A}:D(\mathcal{A})\subset \mathcal{H}%
\rightarrow \mathcal{H}$: if initial data $[p_{0},\mathbf{u}%
_{0};w_{0},w_{1}]\in D(\mathcal{A})$, the corresponding solution $[p(t),%
\mathbf{u}(t);w(t),w_{t}(t)]\in C([0,\infty ),D(\mathcal{A}))$. In
particular, the solution satisfies the condition (A.iv) in the definition of
the generator. This means that one has that the tangential boundary
condition 
\begin{equation*}
\lbrack \sigma (\mathbf{u}_{0})\mathbf{n}-p_{0}\mathbf{n}]\cdot \mathbf{\tau 
}=0\text{\ \ for all }\mathbf{\tau }\in TH^{1/2}(\partial \mathcal{O}),
\end{equation*}%
satisfied in the sense of distributions. That is to say, $\forall ~
\boldsymbol{\tau }\in TH^{1/2}(\partial \mathcal{O})$ and $\forall \phi \in 
\mathcal{D}(\partial \mathcal{O})$,%
\begin{equation}
\langle \sigma (\mathbf{u}_{0})\mathbf{n}-p_{0}\mathbf{n},\phi \boldsymbol{%
\tau }\rangle _{\partial \mathcal{O}}=0.  \label{weak2}
\end{equation}
\end{remark}

\begin{remark}[Notions of Solution]
\label{notion} We note that semigroup solutions, as arrived at in Theorem %
\ref{wellp} for initial data $\mathbf{y}\in \mathcal{H}$, correspond to so
called \emph{mild} solutions (satisfying an integral form of \eqref{system1}%
--\eqref{IC_2}) in the sense of \cite[Section 4.2]{pazy}. Moreover, for
initial data $\mathbf{y}\in D(\mathcal{A})$, we obtain so called \emph{%
strong solutions}, which satisfy the PDE in a pointwise sense.

In \cite{Chu2013-comp}, semigroup techniques are not used in demonstrating
well-posedness. As such, the author takes care to define an appropriate
notion of \emph{weak solution} corresponding to a Galerkin construction (see 
\cite[pp.653--654]{Chu2013-comp}). Such a notion of weak solution is
relevant here, and can be obtained by making minor modifications that take
into account the vector field $\mathbf{U}$. Here we assert that mild
solutions (obtained via our semigroup) are in fact weak solutions as in \cite%
{Chu2013-comp}. In this way, we recover the well-posedness result of \cite%
{Chu2013-comp} (in the linear and nonlinear cases, with $\mathcal{O}$ bounded) by
simply letting $\mathbf{U}\equiv \mathbf{0}$. (Note that in Section \ref%
{static} we discuss the relation between the weak and strong forms of the 
\emph{stationary problem} associated with \eqref{system1}--\eqref{IM2}, and
in Section \ref{nonlinear} we discuss the presence of plate nonlinearity, in
line with \cite{Chu2013-comp}.)
\end{remark}

Finally, we describe the energy balance equation for semigroup solutions to %
\eqref{system1}--\eqref{IC_2}. We introduce the natural notion of \emph{energy} into
the analysis. Semigroup solutions obtained on the finite energy space $%
\mathcal{H}$ are measured in the finite energy norm, which provides us with
the energy functional: for $\mathbf{y}_{0}=(p_{0},\mathbf{u}%
_{0},w_{0},w_{1})\in \mathcal{H}$, we have 
\begin{equation*}
\mathcal{E}(\mathbf{y}_{0})=\frac{1}{2}||\mathbf{y}_{0}||_{\mathcal{H}}^{2}=%
\frac{1}{2}\Big\{||p_{0}||_{\Omega }^{2}+||\mathbf{u}_{0}||_{\Omega
}^{2}+||\Delta w_{0}||_{\Omega }^{2}+||w_{1}||_{\Omega }^{2}\Big\}.
\end{equation*}%
Let us also introduce the convenient notation: 
\begin{equation}
a_{\mathcal{O}}(\mathbf{u},{\boldsymbol{\psi }})=(\sigma (\mathbf{u}%
),\epsilon ({\boldsymbol{\psi }}))_{\mathcal{O}}+\eta (\mathbf{u},%
\boldsymbol{\psi })_{\mathcal{O}}.  \label{bi}
\end{equation}

With strong solutions in hand (corresponding to smooth data in $D(\mathcal A)$), we may test %
\eqref{system1}--\eqref{IC_2} w\i th $p,\mathbf{u},$ and $w_{t}$
(respectively) to obtain the energy balance. The energy balance is then
obtained for semigroup (mild) solutions through the standard limiting
process. Equivalently, it is admissible to test with semigroup solutions
(for $\mathbf{y}_{0}\in \mathcal{H}$, $p\in L^{2}\big(0,t;L^{2}(\mathcal{O})%
\big)$, $\mathbf{u}\in L^{2}\big(0,t;\mathbf{L}^{2}(\mathcal{O})\big)$, and $%
w_{t}\in L^{2}\big(0,t;L^{2}(\Omega )\big)$) in the weak form of the problem
\i n \cite{Chu2013-comp}. This also yields the energy balance below.

\begin{lemma}
\label{energybalance} Consider $\mathbf{y}_0=(p_0,\mathbf{u}_0,w_0,w_1) \in 
\mathcal{H}$ and $\mathbf{U }\in \mathbf{V}_0$. Any mild solution $y(t)=e^{%
\mathcal{A }t}\mathbf{y}_0=(p(t),\mathbf{u}(t),w(t),w_t(t))$ to %
\eqref{system1}--\eqref{IC_2} satisfies for $t>0$: 
\begin{align}  \label{balancelaw}
\mathcal{E}\big(p(t),\mathbf{u}(t),w(t),w_t(t)\big) +\int_0^t a_{\mathcal{O}%
}(\mathbf{u}(\tau),\mathbf{u}(\tau)) d\tau =&~ \mathcal{E}\big(p_0,\mathbf{u}%
_0,w_0,w_1\big) \\
&+\frac{1}{2}\int_0^t\int_{\mathcal{O}} \text{div}(\mathbf{U})[|p(\tau)|^2+|%
\mathbf{u}(\tau)|^2]dx d\tau.  \notag
\end{align}
\end{lemma}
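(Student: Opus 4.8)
The plan is to establish the energy balance \eqref{balancelaw} first for strong solutions, i.e.\ for data $\mathbf{y}_0 \in D(\mathcal{A})$, and then pass to mild solutions by the usual density argument. For a strong solution, $y(t) = e^{\mathcal{A}t}\mathbf{y}_0 \in C([0,\infty),D(\mathcal{A})) \cap C^1([0,\infty),\mathcal{H})$, so all the manipulations below are rigorously justified. The starting point is the elementary identity $\frac{1}{2}\frac{d}{dt}\|y(t)\|_{\mathcal{H}}^2 = (\mathcal{A}y(t), y(t))_{\mathcal{H}}$; integrating in time over $(0,t)$ reduces the lemma to showing
\begin{equation*}
(\mathcal{A}\mathbf{y},\mathbf{y})_{\mathcal{H}} = -a_{\mathcal{O}}(\mathbf{u},\mathbf{u}) + \frac{1}{2}\int_{\mathcal{O}} \text{div}(\mathbf{U})\big[|p|^2 + |\mathbf{u}|^2\big]\,dx
\end{equation*}
for every $\mathbf{y} = (p,\mathbf{u},w,v) \in D(\mathcal{A})$, where $v = w_t$. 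This is precisely the dissipativity-type computation that underlies the Lumer--Phillips argument of Section \ref{proof}, and one expects it to have been carried out there; if so, one may simply invoke it.

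First I would compute $(\mathcal{A}\mathbf{y},\mathbf{y})_{\mathcal{H}}$ term by term against the inner product \eqref{innerp}. The pressure row contributes $-(\mathbf{U}\cdot\nabla p, p)_{\mathcal{O}} - (\text{div}\,\mathbf{u}, p)_{\mathcal{O}}$; integrating the first term by parts and using $\mathbf{U}\cdot\mathbf{n} = 0$ on $\partial\mathcal{O}$ (since $\mathbf{U} \in \mathbf{V}_0$) yields $(\mathbf{U}\cdot\nabla p, p)_{\mathcal{O}} = -\frac{1}{2}\int_{\mathcal{O}}\text{div}(\mathbf{U})|p|^2\,dx$, which accounts for the $|p|^2$ term on the right. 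The velocity row contributes $-(\nabla p, \mathbf{u})_{\mathcal{O}} + (\text{div}\,\sigma(\mathbf{u}), \mathbf{u})_{\mathcal{O}} - \eta\|\mathbf{u}\|_{\mathcal{O}}^2 - (\mathbf{U}\cdot\nabla\mathbf{u}, \mathbf{u})_{\mathcal{O}}$; the last term similarly gives $\frac{1}{2}\int_{\mathcal{O}}\text{div}(\mathbf{U})|\mathbf{u}|^2\,dx$, and Green's formula for the stress tensor gives $(\text{div}\,\sigma(\mathbf{u}),\mathbf{u})_{\mathcal{O}} = -(\sigma(\mathbf{u}),\epsilon(\mathbf{u}))_{\mathcal{O}} + \langle \sigma(\mathbf{u})\mathbf{n}, \mathbf{u}\rangle_{\partial\mathcal{O}}$. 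The plate rows contribute $(\Delta v, \Delta w)_{\Omega}$ from the $w$-equation and, from the $w_t$-equation, $(p|_\Omega, v)_\Omega - \big([2\nu\partial_{x_3}(\mathbf{u})_3 + \lambda\,\text{div}(\mathbf{u})]_\Omega, v\big)_\Omega - (\Delta^2 w, v)_\Omega$; integrating $(\Delta^2 w, v)_\Omega$ by parts twice using the clamped conditions $w = \partial_\nu w = 0$ on $\partial\Omega$ cancels the $(\Delta v,\Delta w)_\Omega$ term.

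The crux is to show that all the boundary terms collapse correctly. After the integrations by parts one is left with the boundary contribution $\langle \sigma(\mathbf{u})\mathbf{n}, \mathbf{u}\rangle_{\partial\mathcal{O}} - (\nabla p\text{ term already absorbed}) + (p|_\Omega, v)_\Omega - \big([2\nu\partial_{x_3}(\mathbf{u})_3 + \lambda\,\text{div}(\mathbf{u})]_\Omega, v\big)_\Omega$, together with any $\langle p\mathbf{n}, \cdot\rangle$ terms that arise if one integrates $(\nabla p, \mathbf{u})$ by parts instead. The key is the decomposition of the boundary: $\mathbf{u}\cdot\mathbf{n} = 0$ on $S$ and $\mathbf{u}\cdot\mathbf{n} = v$ on $\Omega$ with $\mathbf{n}|_\Omega = (0,0,1)$, so on $\Omega$ one writes $\mathbf{u} = (\mathbf{u}\cdot\mathbf{n})\mathbf{n} + \mathbf{u}_{\tau} = v\,\mathbf{n} + \mathbf{u}_\tau$ with $\mathbf{u}_\tau$ tangential. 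Pairing $(\sigma(\mathbf{u})\mathbf{n} - p\mathbf{n})$ against $\mathbf{u}$ on $\partial\mathcal{O}$ and splitting into normal and tangential parts: the tangential part vanishes by the tangential boundary condition (A.iv)/\eqref{weak2}, namely $(\sigma(\mathbf{u})\mathbf{n} - p\mathbf{n})\cdot\boldsymbol{\tau} = 0$ for all $\boldsymbol{\tau}\in TH^{1/2}(\partial\mathcal{O})$ — this is exactly why that condition is built into $D(\mathcal{A})$ — and the normal part survives only on $\Omega$, where $(\sigma(\mathbf{u})\mathbf{n} - p\mathbf{n})\cdot\mathbf{n}\big|_\Omega = 2\nu\partial_{x_3}(\mathbf{u})_3 + \lambda\,\text{div}(\mathbf{u}) - p$ matches precisely the bracket term in the plate equation \eqref{IM2}, and is paired against $\mathbf{u}\cdot\mathbf{n}|_\Omega = v$. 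Hence the fluid boundary term and the plate coupling terms cancel, leaving exactly $-a_{\mathcal{O}}(\mathbf{u},\mathbf{u})$ plus the divergence terms.

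The main obstacle is the rigorous justification of these Green's formulas and boundary pairings on the Lipschitz domain $\mathcal{O}$ with only $\mathbf{u} \in \mathbf{H}^1(\mathcal{O})$ and $\sigma(\mathbf{u})\mathbf{n}$ merely in $\mathbf{H}^{-1/2}(\partial\mathcal{O})$ — the pairings must be understood in the duality sense $\langle \cdot,\cdot\rangle_{\mathbf{H}^{-1/2}(\partial\mathcal{O})\times\mathbf{H}^{1/2}(\partial\mathcal{O})}$, and one needs that for $\mathbf{y} \in D(\mathcal{A})$ the quantity $\text{div}\,\sigma(\mathbf{u}) - \nabla p \in \mathbf{L}^2(\mathcal{O})$ (condition (A.ii)) so that the generalized Green identity for the Lamé/stress operator applies. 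Once the strong-solution identity is secured, the extension to mild solutions $\mathbf{y}_0 \in \mathcal{H}$ follows by choosing $\mathbf{y}_0^{(n)} \in D(\mathcal{A})$ with $\mathbf{y}_0^{(n)} \to \mathbf{y}_0$ in $\mathcal{H}$, using the continuity of $e^{\mathcal{A}t}$ on $\mathcal{H}$ together with the regularity $p \in L^2(0,t;L^2(\mathcal{O}))$, $\mathbf{u} \in L^2(0,t;\mathbf{L}^2(\mathcal{O}))$, $w_t \in L^2(0,t;L^2(\Omega))$ to pass to the limit in each term of \eqref{balancelaw} — note that $a_{\mathcal{O}}(\mathbf{u},\mathbf{u})$ involves $\|\mathbf{u}\|_{\mathbf{H}^1}$-type quantities, so one should observe that it is the time-integrated quantity $\int_0^t a_{\mathcal{O}}(\mathbf{u},\mathbf{u})\,d\tau$ that is controlled (it equals the difference of energies plus the divergence term, all of which pass to the limit), rather than pointwise-in-time $\mathbf{H}^1$ bounds.
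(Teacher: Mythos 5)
Your proposal is correct and is essentially the paper's own argument: the pointwise identity $\operatorname{Re}(\mathcal{A}\mathbf{y},\mathbf{y})_{\mathcal{H}}=-a_{\mathcal{O}}(\mathbf{u},\mathbf{u})+\tfrac12\int_{\mathcal{O}}\text{div}(\mathbf{U})\big[|p|^{2}+|\mathbf{u}|^{2}\big]\,dx$ for $\mathbf{y}\in D(\mathcal{A})$ is exactly the dissipativity computation \eqref{diss_1}--\eqref{diss_3} of Section \ref{diss} with the $\tfrac12\text{div}(\mathbf{U})$ perturbation of \eqref{pertA} removed, and the paper likewise obtains \eqref{balancelaw} by testing with $p,\mathbf{u},w_{t}$ for data in $D(\mathcal{A})$ and then passing to mild solutions by the standard density limit. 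The one refinement to make is that the normal/tangential splitting of $\mathbf{u}\big|_{\partial\mathcal{O}}$ should be effected through the domain condition (A.v) (writing $\mathbf{u}_{0}=\boldsymbol{\mu}_{0}+\widetilde{\boldsymbol{\mu}}_{0}$) rather than pointwise, since on the Lipschitz boundary multiplication by $\mathbf{n}\in L^{\infty}$ need not preserve $\mathbf{H}^{1/2}(\partial\mathcal{O})$; this is precisely the role of (A.iv)--(A.v) in the boundary cancellation leading to \eqref{diss_1.4}.
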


\begin{remark}
We note two features of the energy identity: first, when the field $\mathbf{%
U }\in \mathbf{V}_0$ is also divergence free, the energy identity remains
the same as in the case where $\mathbf{U }\equiv 0$ (like \cite{Chu2013-comp}%
). Secondly, the dissipation integral $\int_0^{\infty} a_{\mathcal{O}}(%
\mathbf{u},\mathbf{u})d\tau$ depends on the quantity $\text{\emph{div}}~%
\mathbf{U}$ as well: again, with $\text{\emph{div}}~\mathbf{U }\equiv 0$, we
see that 
\begin{equation*}
\int_0^{\infty} a_{\mathcal{O}}(\mathbf{u}(\tau),\mathbf{u}(\tau))d\tau <
+\infty,
\end{equation*}
with a bound that depends only on the initial data.
\end{remark}

We conclude this section by noting that we  provide a
discussion of solutions in the presence of nonlinear (von Karman) plate
dynamics, including well-posedness, energy-balance, and stationary
solutions, but we relegate this discussion to Section
\ref{nonlinear}.

\section{Discussion of Main Results in Relation to the Literature}

\label{techreview}

The model under consideration describes the case of a (possibly viscous) 
\emph{compressible} gas/fluid flow and was recently studied in \cite%
{Chu2013-comp} in the case with \emph{zero} speed ($\mathbf{U}=0$) of the
unperturbed flow. Beginning with compressible Navier-Stokes, one can obtain
several fluid-plate cases which are important from an applied point of view:

\begin{itemize}
\item \textbf{Incompressible Fluid}, i.e., $\text{div}~\mathbf{u}=0$ and
density constant: In the \emph{viscous} case, the standard linearized
Navier-Stokes equations arise; fluid-plate interactions in this case were
studied in \cite{ChuRyz2011,cr-full-karman,ChuRyz2012-pois,berlin11}.
Results on well-posedness and attractors for different elastic descriptions
and domains were obtained. In this case, we also mention the work \cite%
{clark,george1,george2} which addresses semigroup well-posedness of a
related linear fluid-plate model, and decay rates via \emph{frequency domain}
techniques. The \emph{inviscid} case was studied in \cite{Chu2013-inviscid}
in the same context.

\item \textbf{Compressible Fluid}: In the inviscid case we can obtain
wave-type dynamics for the (perturbed) velocity potential $(\mathbf{u}%
=\nabla \phi $, potential flow) of the form (see also \cite%
{BA62,bolotin,dowell1}): 
\begin{equation}
\begin{cases}
(\partial _{t}+\mathbf{U}\cdot \nabla )^{2}\phi =\Delta \phi & \text{ in }%
\mathcal{O}\times (0,T), \\ 
{\partial _{z}}\phi =L(w_{t},\nabla w) & \text{ on }\Omega \times (0,T) \\ 
{\partial _{z}}\phi =0 & \text{ on }\partial \mathcal{O}\setminus \Omega
\times (0,T).%
\end{cases}
\label{flow}
\end{equation}%
In these variables, the pressure/density of the fluid has the form $%
p=(\partial _{t}+\mathbf{U}\cdot \nabla )\phi $. Due to the impermeability
assumption, in the case of the perfect fluid, we have only one Neumann-type
boundary condition given above via the operator $L$. The (semigroup)
well-posedness \cite{supersonic, webster} and stability properties \cite%
{springer,delay,conequil2} of this model have been intensively studied.

The \emph{viscous} case was studied in \cite{Chu2013-comp}, and is the
motivation of the current work.
\end{itemize}

In all the papers cited above, the interactive dynamics between fluid and a
plate (or shell) are considered. These analyses are distinguished from those
for other fluid-structure interactive PDE models in that the elastic
structure is two dimensional, and evolves on the boundary of the three
dimensional fluid domain. One of the key issues for the present
configuration---and indeed, one of the main points in the bulk of the
literature above---is the determination of how, and to what extent, the
fluid (de)stabilizes the structure. In \cite%
{Chu2013-comp,ChuRyz2011,ChuRyz2012-pois}, after obtaining well-posedness of
the models (with structural nonlinearity), the existence of compact global
attractors for the dynamics is shown; in some cases the existence of this
invariant set is due strictly to the presence of the fluid, rather than some
underlying structural phenomenon.\footnote{%
We mention that one of the prominent tools utilized in these fluid-structure
interactions---with nonlinearity present in the structure---is the recently
developed \emph{quasi-stability} theory for dissipative dynamical systems
(see \cite{newigor,springer}).} In addition, it is sometimes possible
(perhaps under additional assumptions) to show strong stabilization to
equilibrium for the fluid-structure dynamics (e.g., \cite{conequil2,springer}%
). In all cases where the ambient flow field $\mathbf{U}\neq \mathbf{0}$,
the stability properties of the model depend greatly on the structure and
magnitude of the flow field $\mathbf{U}$ \cite{spectral}. This will
certainly be the case for the dynamics considered here, as one can see from %
\eqref{balancelaw}.

\begin{remark}
The survey papers \cite{berlin11,dcds} provides a nice overview of the
modeling, well-posedness, and long-time behavior results for the family of
dynamics described above.
\end{remark}

We emphasize that in any study involving \emph{compressible fluids}, the
enforced compressibility produces additional density/pressure variables,
and, as a result, well-posedness cannot be obtained in a straightforward
way. In fact, the primary difficulty lies in showing the maximality (range)
condition of the generator, since one has to address this density/pressure
component. This variable cannot be readily eliminated, and therefore
accounts for an elliptic equation which must be solved. To overcome this, we
develop a methodology based on the application of a static well-posedness
result given in the Appendix of \cite{dV} (see also \cite{LaxPhil}). That
paper, as well as \cite{valli}, deals with the stationary compressible
Navier-Stokes equations. Their principal result (obtained independently,
through different methodologies) is a small data well-posedness for the
fully nonlinear fluid problem. However, both approaches first necessarily
provide a framework for the linearized problem; in particular, \cite{dV}
provides a strategy for our analysis of the stationary compressible
fluid-structure PDE which is associated with maximality of the generator.

The pioneering work \cite{Chu2013-comp}, which we cite as the primary
motivating reference, considers the model presented here with $\mathbf{U}%
\equiv 0$. In this paper, solvability and dynamical properties of the model
are considered in the case of a general (possibly unbounded) smooth domain
and in the presence of plate nonlinearity. Along with the well-posedness
result, the existence of a finite dimensional compact global attractor is
proved when the domain is bounded. The techniques used are consistent with
those in \cite{ChuRyz2011,cr-full-karman,Chu2013-inviscid}, namely,
Galerkin-type procedures are implemented, along with good a priori
estimates, in order to produce solutions. As with many fluid-structure
interactions, the critical issue in \cite{Chu2013-comp} is the appearance of
ill-defined traces at the interface. In the incompressible case, one can
recover negative Sobolev trace regularity of the pressure $p$ at the
interface via properties of the Stokes' operator. However, in the viscous
compressible case this is no longer true. Our semigroup approach does not
require the use of approximate solutions. Indeed, we overcome the key
difficulty of trace regularity issues by exploiting cancellations at the
level of solutions with data in the generator. In this way we do not have to
work component-wise on the dynamic equations, though we must work carefully
(and component-wise) on the static problem associated with maximality of the
generator. We remark that, despite these trace regularity issues, when $%
\mathbf{U}\equiv 0$, uniform decay of finite energy solutions is obtained in 
\cite{Chu2013-comp} through a clever Lyapunov approach that makes use of a
Neumann lifting map with associated estimates; this construction is
fundamentally obstructed by the addition of the $\mathbf{U}\cdot \nabla p$
term in the pressure equation here. See the forthcoming work \cite{preprint} on
the decay properties of the model considered here.

\section{The Proof of Theorem \protect\ref{wellp}}

\label{proof}

Our proof of well-posedness hinges on showing that the matrix $\mathcal{A}:D(%
\mathcal{A})\subset \mathcal{H}\rightarrow \mathcal{H}$ generates a $C_{0}$%
-semigroup. At this point, we should note that due to the existence of the
generally nonzero ambient vector field $\mathbf{U}$ in the model, we have a
lack of dissipativity of the operator $\mathcal{A}$. Accordingly, we
introduce the following bounded perturbation $\widehat{\mathcal{A}}$ of our
generator $\mathcal{A}$: 
\begin{equation}
\widehat{\mathcal{A}}=\mathcal{A}-\dfrac{\text{div}(\mathbf{U})}{2}%
\begin{bmatrix}
I & 0 & 0 & 0 \\ 
0 & I & 0 & 0 \\ 
0 & 0 & 0 & 0 \\ 
0 & 0 & 0 & 0%
\end{bmatrix}%
\text{, \ \ }D(\widehat{\mathcal{A}})=D(\mathcal{A}).  \label{pertA}
\end{equation}%
Therewith, the proof of Theorem \ref{wellp} is geared towards establishing
the maximal dissipativity of the linear operator $\widehat{\mathcal{A}}$;
subsequently, an application of the Lumer-Phillips Theorem will yield that $%
\widehat{\mathcal{A}}$ generates a $C_{0}$ semigroup of contractions on $%
\mathcal{H}$. In turn, applying the standard perturbation result \cite{kato}
(given, for instance, in \cite[Theorem 1.1, p.76]{pazy}) yields semigroup
generation for the original modeling fluid-structure operator $\mathcal{A}$
of (\ref{AAA}), via (\ref{pertA}).

\subsection{Dissipativity}

\label{diss} Considering the inner-product for the state space $\mathcal{H}$
given in (\ref{innerp}), for any $\mathbf{y}=[p_{0},\mathbf{u}%
_{0},w_{1},w_{2}]^{T}\in D(\mathcal{A})$ we have 
\begin{align}
\left( \widehat{\mathcal{A}}%
\begin{bmatrix}
p_{0} \\ 
\mathbf{u}_{0} \\ 
w_{1} \\ 
w_{2}%
\end{bmatrix}%
,%
\begin{bmatrix}
p_{0} \\ 
\mathbf{u}_{0} \\ 
w_{1} \\ 
w_{2}%
\end{bmatrix}%
\right) _{\mathcal{H}}=& ~-(\mathbf{U}\cdot \nabla p_{0},p_{0})_{\mathcal{O}%
}-\frac{1}{2}(\text{div}(\mathbf{U})p_{0},p_{0})_{\mathcal{O}}-(\text{div}(%
\mathbf{u}_{0}),p_{0})_{\mathcal{O}}-\eta ||\mathbf{u}_{0}||_{\mathbf{L}^{2}(%
\mathcal{O})}^{2}  \notag \\
& +(\text{div}~\sigma (\mathbf{u}_{0})-\nabla p_{0},\mathbf{u}_{0})_{%
\mathcal{O}}-(\mathbf{U}\cdot \nabla \mathbf{u}_{0},\mathbf{u}_{0})_{%
\mathcal{O}}-\frac{1}{2}(\text{div}(\mathbf{U})\mathbf{u}_{0},\mathbf{u}%
_{0})_{\mathcal{O}}  \notag \\
&  \notag \\
& +(\Delta w_{2},\Delta w_{1})_{\Omega }-(\Delta ^{2}w_{1},w_{2})_{\Omega
}-( \left[ 2\nu \partial _{x_{3}}(\mathbf{u}_{0})_{3}+\lambda \text{ div}(%
\mathbf{u}_{0})\right] _{\Omega }-\left. p_{0}\right\vert _{\Omega
},w_{2})_{\Omega }.  \label{diss_1}
\end{align}

\smallskip

\noindent Applying Green's Theorems to right hand side, we subsequently have 
\begin{align}
\left( \widehat{\mathcal{A}}%
\begin{bmatrix}
p_{0} \\ 
\mathbf{u}_{0} \\ 
w_{1} \\ 
w_{2}%
\end{bmatrix}%
,%
\begin{bmatrix}
p_{0} \\ 
\mathbf{u}_{0} \\ 
w_{1} \\ 
w_{2}%
\end{bmatrix}%
\right) _{\mathcal{H}}=& ~-(\mathbf{U}\cdot \nabla p_{0},p_{0})_{\mathcal{O}%
}-\frac{1}{2}(\text{div}(\mathbf{U})p_{0},p_{0})_{\mathcal{O}}-(\text{div}(%
\mathbf{u}_{0}),p_{0})_{\mathcal{O}}  \notag \\
& -(\sigma (\mathbf{u}_{0}),\epsilon (\mathbf{u}_{0}))_{\mathcal{O}}-\eta ||%
\mathbf{u}_{0}||_{\mathbf{L}^{2}(\mathcal{O})}^{2}+(p_{0},\text{div}(\mathbf{%
u}_{0}))_{\mathcal{O}}+\left\langle \sigma (\mathbf{u}_{0})\mathbf{n}-p_{0}%
\mathbf{n},\mathbf{u}_{0}\right\rangle _{\partial \mathcal{O}}  \notag \\
&  \notag \\
& -(\mathbf{U}\cdot \nabla \mathbf{u}_{0},\mathbf{u}_{0})_{\mathcal{O}}-%
\frac{1}{2}(\text{div}(\mathbf{U})\mathbf{u}_{0},\mathbf{u}_{0})_{\mathcal{O}%
}  \notag \\
&  \notag \\
& +(\Delta w_{2},\Delta w_{1})_{\Omega }-(\Delta w_{1},\Delta w_{2})_{\Omega
}-(\left[ 2\nu \partial _{x_{3}}(\mathbf{u}_{0})_{3}+\lambda \text{div}(%
\mathbf{u}_{0})\right] _{\Omega }-\left. p_{0}\right\vert _{\Omega
},w_{2})_{\Omega }.  \label{dissi_1.2}
\end{align}%
Invoking now the boundary conditions (A.iv) and (A.v), in the definition of
the domain $D(\mathcal{A})$, there is then a cancellation of boundary terms
so as to have%
\begin{align}
\left( \widehat{\mathcal{A}}%
\begin{bmatrix}
p_{0} \\ 
\mathbf{u}_{0} \\ 
w_{1} \\ 
w_{2}%
\end{bmatrix}%
,%
\begin{bmatrix}
p_{0} \\ 
\mathbf{u}_{0} \\ 
w_{1} \\ 
w_{2}%
\end{bmatrix}%
\right) _{\mathcal{H}}=& ~-(\mathbf{U}\cdot \nabla p_{0},p_{0})_{\mathcal{O}%
}-\frac{1}{2}(\text{div}(\mathbf{U})p_{0},p_{0})_{\mathcal{O}}-2i\func{Im}(%
\text{div}(\mathbf{u}_{0}),p_{0})_{\mathcal{O}}  \notag \\
& -(\sigma (\mathbf{u}_{0}),\epsilon (\mathbf{u}_{0}))_{\mathcal{O}}-\eta ||%
\mathbf{u}_{0}||_{\mathbf{L}^{2}(\mathcal{O})}^{2}-(\mathbf{U}\cdot \nabla 
\mathbf{u}_{0},\mathbf{u}_{0})_{\mathcal{O}}-\frac{1}{2}(\text{div}(\mathbf{U%
})\mathbf{u}_{0},\mathbf{u}_{0})_{\mathcal{O}}  \notag \\
&  \notag \\
& -2i\func{Im}(\Delta w_{1},\Delta w_{2})_{\Omega }.  \label{diss_1.4}
\end{align}

Moreover, via Green's Theorem, as well as the assumption that $\mathbf{U}\in 
\mathbf{V}_{0}$ (as defined in (\ref{V_0})), we obtain 
\begin{equation}
2\func{Re}(\mathbf{U}\cdot \nabla p_{0},p_{0})_{\mathcal{O}}=-\int_{\mathcal{%
O}}\text{div}(\mathbf{U})\left\vert p_{0}\right\vert ^{2}d\mathcal{O};
\label{diss_2}
\end{equation}%
\begin{equation}
2\func{Re}(\mathbf{U}\cdot \nabla \mathbf{u}_{0},\mathbf{u}_{0})_{\mathcal{O}%
}=-\int_{\mathcal{O}}\text{div}(\mathbf{U})\left\vert \mathbf{u}%
_{0}\right\vert ^{2}d\mathcal{O}.  \label{diss_3}
\end{equation}

\bigskip

\noindent Applying these relations to the right hand of (\ref{diss_1.4}), we
then have%
\begin{equation*}
\func{Re}\left( \widehat{\mathcal{A}}%
\begin{bmatrix}
p_{0} \\ 
\mathbf{u}_{0} \\ 
w_{1} \\ 
w_{2}%
\end{bmatrix}%
,%
\begin{bmatrix}
p_{0} \\ 
\mathbf{u}_{0} \\ 
w_{1} \\ 
w_{2}%
\end{bmatrix}%
\right) _{\mathcal{H}}=-(\sigma (\mathbf{u}_{0}),\epsilon (\mathbf{u}_{0}))_{%
\mathcal{O}}-\eta ||\mathbf{u}_{0}||_{\mathbf{L}^{2}(\mathcal{O})}^{2}\leq 0,
\end{equation*}%
which establishes the dissipativity of $\widehat{\mathcal{A}}:D(\mathcal{A}%
)\subset \mathcal{H}\rightarrow \mathcal{H}.$

\subsection{Maximality}

\label{max} In this section we show the maximality property of the operator $%
\widehat{\mathcal{A}}$ on the space $\mathcal{H}$. To this end, we will need
to establish the \emph{range condition}, at least for parameter $\xi >0~~$%
sufficiently large. Namely, we must show 
\begin{equation}
Range(\xi I-\widehat{\mathcal{A}})=\mathcal{H},~~\text{for some}~~\xi >0.
\label{range_0}
\end{equation}%
This necessity is equivalent to finding $[p,{\mathbf{v}},w_{1},w_{2}]\in D(%
\mathcal{A})$ which satisfies, for given $[p^{\ast },{\mathbf{v}}^{\ast
},w_{1}^{\ast },w_{2}^{\ast }]\in \mathcal{H}$, the abstract equation 
\begin{equation}
(\xi I-\widehat{\mathcal{A}})%
\begin{bmatrix}
p \\ 
{\mathbf{v}} \\ 
w_{1} \\ 
w_{2}%
\end{bmatrix}%
=%
\begin{bmatrix}
p^{\ast } \\ 
{\mathbf{v}}^{\ast } \\ 
w_{1}^{\ast } \\ 
w_{2}^{\ast }%
\end{bmatrix}%
.  \label{range}
\end{equation}%
Given the definition of $\mathcal{A}$ in (\ref{AAA}), then in PDE terms,
solving the abstract equation (\ref{range_0}) is equivalent to proving that
the following system of equations, with given data $[p^{\ast },{\mathbf{v}}%
^{\ast },w_{1}^{\ast },w_{2}^{\ast }]\in \mathcal{H}$, has a (unique)
solution $[p,{\mathbf{v}},w_{1},w_{2}]\in D(\mathcal{A})$:%
\begin{align}
& \left\{ 
\begin{array}{l}
\xi p+\mathbf{U}\cdot \nabla p+\frac{1}{2}\text{div}(\mathbf{U})p+\text{div}(%
{\mathbf{v}})=\text{ }p^{\ast }~~\text{ in }~\mathcal{O} \\ 
\xi {\mathbf{v}}+\mathbf{U}\cdot \nabla {\mathbf{v}}+\frac{1}{2}\text{div}(%
\mathbf{U}){\mathbf{v}}-\text{div}~\sigma ({\mathbf{v}})+\eta {\mathbf{v}}%
+\nabla p=~{\mathbf{v}}^{\ast }~~\text{ in }~\mathcal{O} \\ 
(\sigma (\mathbf{v})\mathbf{n}-p\mathbf{n})\cdot \boldsymbol{\tau }=0~\text{
on }~\partial \mathcal{O} \\ 
\mathbf{v}\cdot \mathbf{n}=0~\text{ on }~S \\ 
\mathbf{v}\cdot \mathbf{n}=w_{2}~\text{ on }~\Omega%
\end{array}%
\right.  \label{statice1} \\
&  \notag \\
& \left\{ 
\begin{array}{l}
\xi w_{1}-w_{2}=~w_{1}^{\ast }~~\text{ on }~\Omega \\ 
\xi w_{2}+\Delta ^{2}w_{1}+\left[ 2\nu \partial _{x_{3}}(\mathbf{v)}%
_{3}+\lambda \text{div}(\mathbf{v})-p\right] _{\Omega }=w_{2}^{\ast }~\text{
on }~\Omega \\ 
w_{1}=\frac{\partial w_{1}}{\partial \nu }=0~\text{ on }~\partial \Omega .%
\end{array}%
\right.  \label{staticsys3.5}
\end{align}%
We will give our proof of maximality in two steps. In the first step, we
will show the existence and uniqueness of \textquotedblleft
uncoupled\textquotedblright\ versions of the compressible fluid-structure
PDE system (\ref{statice1}) which is satisfied by variables $\left\{ p,%
\mathbf{v}\right\} $. To this end, the key ingredient will be the
well-posedness result Theorem \ref{dV}, which is applicable to (uncoupled)
equations of the type satisfied by the pressure variable. (See also \cite{dV}
and \cite{LaxPhil}.)

Subsequently, we proceed to establish the range condition (\ref{range}), by
sequentially proving the existence of the pressure-fluid-structure
components $\left\{ p,\mathbf{v},w_{1},w_{2}\right\} $ which solve the
coupled system (\ref{statice1})--(\ref{staticsys3.5}). This work for
pressure-fluid-structure static well-posedness involves appropriate uses of
the Lax-Milgram Theorem.

\medskip

\noindent \textbf{STEP 1}:\smallskip\newline
Consider the following $\xi $-parameterized PDE system on the fluid domain $%
\mathcal{O}$, with given forcing terms $\left\{ p^{\ast },{\mathbf{v}}^{\ast
}\right\} \in {L}^{2}(\mathcal{O})\times \lbrack \mathbf{V}_{0}]^{\prime }$%
\textbf{\ }and boundary data $g\in H_{0}^{1/2+\epsilon }(\Omega )$, where $%
\epsilon >0$.

\begin{align}
\xi p+\mathbf{U}\cdot \nabla p+\frac{1}{2}\text{div}(\mathbf{U})p+\text{div}(%
{\mathbf{v}})=& ~p^{\ast }~~\text{ in }~\mathcal{O}  \label{a1} \\
\xi {\mathbf{v}}+\mathbf{U}\cdot \nabla {\mathbf{v}}+\frac{1}{2}\text{div}(%
\mathbf{U}){\mathbf{v}}-\text{div}~\sigma ({\mathbf{v}})+\eta {\mathbf{v}}%
+\nabla p=& ~{\mathbf{v}}^{\ast }~~\text{ in }~\mathcal{O}  \label{a2} \\
\left( \sigma ({\mathbf{v}})\mathbf{n}-p\mathbf{n}\right) \cdot \boldsymbol{%
\tau }=& ~0~~\text{ on }~\partial \mathcal{O}  \label{a3} \\
{\mathbf{v}}\cdot \mathbf{n}=& ~0~~\text{ on }~S  \label{a4} \\
{\mathbf{v}}\cdot \mathbf{n}=& ~g~~\text{ on}~\Omega  \label{a5}
\end{align}
(and, where again, the ambient vector field $\mathbf{U}\in \mathbf{V}%
_{0}\cap \mathbf{H}^{3}(\mathcal{O})$).

\smallskip

\noindent {STEP 1} consists of proving the following (driving) lemma for the
existence and uniqueness of the solution $\left\{p,\mathbf{v}\right\} $ of (%
\ref{a1})--(\ref{a5}).

\begin{lemma}
\label{staticwellp} (i) With reference to problem (\ref{a1})--(\ref{a5}):
with given data 
\begin{equation*}
\lbrack p^{\ast },{\mathbf{v}}^{\ast },g]\in {L}^{2}(\mathcal{O})\times
\lbrack \mathbf{V}_{0}]^{\prime }\times H_{0}^{\frac{1}{2}+\epsilon }(\Omega
)\text{,}
\end{equation*}
and with $\xi >0$ sufficiently large, there exists a unique solution $%
\left\{p,\mathbf{v}\right\} $ $\in {L}^{2}(\mathcal{O})\times \mathbf{H}^{1}(%
\mathcal{O})$ of (\ref{a1})--(\ref{a5}).

(ii) The fluid solution component ${\mathbf{v}}$ is of the form 
\begin{equation}
\mathbf{v}=\mathbf{u}+\widetilde{{\mathbf{v}}_{0}}\text{,}  \label{crucial}
\end{equation}%
where $\mathbf{u}\in \mathbf{V}_{0}$, and $\widetilde{{\mathbf{v}}_{0}}\in 
\mathbf{H}^{1}(\mathcal{O})$ satisfies%
\begin{equation}
\widetilde{{\mathbf{v}}_{0}}\Big|_{\partial \mathcal{O}}=%
\begin{cases}
0 & ~\text{ on }~S \\ 
g\mathbf{n} & ~\text{ on }~\Omega .%
\end{cases}
\label{crucial2}
\end{equation}%
(iii) The trace term $\left[ \sigma ({\mathbf{v}})\mathbf{n}-p\mathbf{n}%
\right] _{\partial \mathcal{O}}\in \mathbf{H}^{-\frac{1}{2}}(\partial 
\mathcal{O})$, and moreover satisfies 
\begin{equation}
\left\langle \sigma ({\mathbf{v}})\mathbf{n}-p\mathbf{n,\tau }\right\rangle
_{\mathbf{H}^{-\frac{1}{2}}(\partial \mathcal{O})\times \mathbf{H}^{\frac{1}{%
2}}(\partial \mathcal{O})}=0\text{ \ for all }\mathbf{\tau }\in
TH^{1/2}(\partial \mathcal{O}),  \label{crucial3}
\end{equation}%
and so the boundary condition (\ref{a3}) is satisfied in the sense of
distributions; see (\ref{weak2}) of Remark \ref{weaker}

(iv) The fluid and pressure solution components $(p,\mathbf{v})$ satisfies
the following estimates, for $\xi =\xi (\mathbf{U})$ large enough: 
\begin{eqnarray}
\left\Vert p\right\Vert _{L^2(\mathcal{O})} &\leq &\frac{C}{\xi }\left\Vert
[p^{\ast },{\mathbf{v}}^{\ast },g]\right\Vert _{\mathbf{L}^{2}(\mathcal{O}%
)\times \lbrack \mathbf{V}_{0}]^{\prime }\times H_{0}^{\frac{1}{2}+\epsilon
}(\Omega )};  \label{cdd_0} \\
\left\Vert {\mathbf{v}}\right\Vert _{\mathbf{H}^{1}(\mathcal{O})} &\leq
&C\left\Vert [p^{\ast },{\mathbf{v}}^{\ast },g]\right\Vert _{\mathbf{L}^{2}(%
\mathcal{O})\times \lbrack \mathbf{V}_{0}]^{\prime }\times H_{0}^{\frac{1}{2}%
+\epsilon }(\Omega )}.  \label{cdd}
\end{eqnarray}
\end{lemma}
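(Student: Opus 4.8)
\textbf{Proof proposal for Lemma \ref{staticwellp}.}

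The plan is to reduce the coupled elliptic system (\ref{a1})--(\ref{a5}) to a decoupled pressure equation of the type covered by Theorem \ref{dV} of the Appendix, and then run a fixed-point / Lax-Milgram argument for the remaining fluid unknown $\mathbf{v}$. First I would dispose of the inhomogeneous boundary data: using the Sobolev trace theorem on the Lipschitz domain $\mathcal{O}$ (e.g. Theorem 3.33 of \cite{Mc}), I would construct a lifting $\widetilde{\mathbf v_0}\in \mathbf{H}^{1}(\mathcal{O})$ with the prescribed trace in (\ref{crucial2}), controlled by $\|g\|_{H_{0}^{1/2+\epsilon}(\Omega)}$; then writing $\mathbf{v}=\mathbf{u}+\widetilde{\mathbf v_0}$ with $\mathbf{u}\in\mathbf{V}_0$ reduces the problem to one with homogeneous normal-trace condition (this also yields item (ii) for free). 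Substituting this splitting into (\ref{a1})--(\ref{a2}) produces modified right-hand sides still in ${L}^2(\mathcal{O})\times[\mathbf V_0]'$, since $\widetilde{\mathbf v_0}\in\mathbf H^1$ and $\mathbf U\in\mathbf H^3(\mathcal{O})\hookrightarrow \mathbf{W}^{1,\infty}$.

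The core step is then the following: for a \emph{given} $\mathbf{v}\in \mathbf H^1(\mathcal{O})$, equation (\ref{a1}) is a first-order transport equation for $p$,
\begin{equation*}
\xi p+\mathbf{U}\cdot\nabla p+\tfrac12\text{div}(\mathbf U)p = p^{\ast}-\text{div}(\mathbf v)\in H^{-1}(\mathcal{O})\ \text{(or }L^2\text{ if }\mathbf v\in \mathbf H^1\text{ with }\text{div}\,\mathbf v\in L^2),
\end{equation*}
and Theorem \ref{dV} (together with \cite{dV,LaxPhil}) gives, for $\xi$ large relative to $\|\mathbf U\|_{\mathbf H^3(\mathcal{O})}$, a unique $p=p(\mathbf v)\in L^2(\mathcal{O})$ with the sharp bound $\|p\|_{L^2(\mathcal{O})}\le C\xi^{-1}\big(\|p^\ast\|+\|\mathbf v\|_{\mathbf H^1}\big)$. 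Inserting $p(\mathbf v)$ into the momentum equation (\ref{a2}), I would pose the variational problem on $\mathbf V_0$: find $\mathbf u\in\mathbf V_0$ such that for all $\boldsymbol\psi\in\mathbf V_0$,
\begin{equation*}
\xi(\mathbf u,\boldsymbol\psi)_{\mathcal{O}}+(\mathbf U\cdot\nabla\mathbf u,\boldsymbol\psi)_{\mathcal{O}}+\tfrac12(\text{div}(\mathbf U)\mathbf u,\boldsymbol\psi)_{\mathcal{O}}+a_{\mathcal{O}}(\mathbf u,\boldsymbol\psi)=(\mathbf v^\ast,\boldsymbol\psi)+(p(\mathbf v),\text{div}\,\boldsymbol\psi)_{\mathcal{O}}+(\text{lifting terms}),
\end{equation*}
where the pressure contributes through integration by parts and the tangential boundary condition (\ref{a3}) is encoded by testing only against $\boldsymbol\psi\in\mathbf V_0$ (whose boundary trace lies in $TH^{1/2}(\partial\mathcal{O})$). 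Korn's inequality plus the $\eta$-term gives coercivity of $a_{\mathcal{O}}(\cdot,\cdot)$ on $\mathbf V_0$, and for $\xi$ large the skew-ish transport terms are absorbed, so Lax-Milgram applies — but because $p$ depends on $\mathbf u$, this must be set up as a contraction in $\mathbf v$: the map $\mathbf v\mapsto\mathbf u\mapsto \mathbf u+\widetilde{\mathbf v_0}$ is a contraction on a ball of $\mathbf H^1(\mathcal{O})$ once $\xi$ is large, since the $\mathbf v$-dependence enters only through $p(\mathbf v)$ with the gain $C/\xi$. The fixed point furnishes the solution $(p,\mathbf v)$, and the estimates (\ref{cdd_0})--(\ref{cdd}) fall out of the $C/\xi$ bound on $p$ and the coercivity bound on $\mathbf u$; item (iii) — that $[\sigma(\mathbf v)\mathbf n-p\mathbf n]_{\partial\mathcal{O}}\in\mathbf H^{-1/2}(\partial\mathcal{O})$ and annihilates $TH^{1/2}(\partial\mathcal{O})$ — follows by a standard distributional argument: the momentum equation holds in $[L^2]$, so the normal stress has a well-defined trace by a Green's-formula/integration-by-parts characterization, and its pairing with any $\boldsymbol\tau\in TH^{1/2}$ vanishes precisely because such $\boldsymbol\tau$ is the trace of an element of $\mathbf V_0$ against which the variational identity was tested.

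The main obstacle I anticipate is the pressure step: unlike the incompressible case there is no Stokes operator to recover pressure regularity, so everything hinges on the transport-equation well-posedness result of \cite{dV} and on tracking the dependence of its constants on $\|\mathbf U\|_{\mathbf H^3(\mathcal{O})}$ and on $\xi$, since it is exactly the smallness factor $C/\xi$ that both closes the contraction and yields the decay estimate (\ref{cdd_0}). A secondary technical point is justifying the integration by parts that produces the boundary pairing $\langle\sigma(\mathbf v)\mathbf n-p\mathbf n,\boldsymbol\psi\rangle_{\partial\mathcal{O}}$ at the low regularity available ($\mathbf v\in\mathbf H^1$, $p\in L^2$ only), which requires care with the definition of the normal-stress trace on the Lipschitz boundary $\partial\mathcal{O}$ and the density of smooth tangential fields in $TH^{1/2}(\partial\mathcal{O})$; I would handle this exactly as in Remark \ref{weaker}, interpreting (\ref{a3}) in the distributional sense (\ref{weak2}).
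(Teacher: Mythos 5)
Your proposal is correct in substance and rests on the same pillars as the paper's proof: lifting the boundary datum $g$ to $\widetilde{{\mathbf{v}}_{0}}\in \mathbf{H}^{1}(\mathcal{O})$ via the trace theorem on the Lipschitz domain, splitting $\mathbf{v}=\mathbf{u}+\widetilde{{\mathbf{v}}_{0}}$ with $\mathbf{u}\in \mathbf{V}_{0}$, resolving the pressure through the transport result of Theorem \ref{dV} with its $C/\xi$ bound, using Korn's inequality plus the $\eta$-term and the vanishing of the skew transport terms $\big(\mathbf{U}\cdot \nabla \mathbf{\psi }+\tfrac{1}{2}\mathrm{div}(\mathbf{U})\mathbf{\psi },\mathbf{\psi }\big)_{\mathcal{O}}=0$ for coercivity on $\mathbf{V}_{0}$, and recovering (iii) by exactly the distributional trace argument of Remark \ref{weaker}. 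Where you genuinely diverge is in how the $p$--$\mathbf{u}$ coupling is closed. You freeze $\mathbf{v}$, solve for $p(\mathbf{v})$ and then for $\mathbf{u}$ by Lax--Milgram, and iterate, closing with Banach's fixed point because the $\mathbf{v}$-dependence enters only through $p(\mathbf{v})$ with gain $C/\xi$ while the coercivity constant from Korn is $\xi$-independent. The paper instead exploits the linearity of the solution map $\mathbf{u}\mapsto p[\mathbf{u}]$ of (\ref{duh1}) and folds the term $-\big(p[\mathbf{u}],\mathrm{div}(\mathbf{\psi })\big)_{\mathcal{O}}$ directly into a single bilinear form $A$ on $\mathbf{V}_{0}$, whose $\mathbf{V}_{0}$-ellipticity for large $\xi$ follows from the estimate (\ref{with}); one application of Lax--Milgram then finishes, with the three-way pressure splitting $p=p[\mathbf{u}]+p[\widetilde{{\mathbf{v}}_{0}}]+p[p^{\ast }]$ making the continuous-dependence bounds (\ref{cdd_0})--(\ref{cdd}) transparent. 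Your iteration buys a scheme that would survive a mildly nonlinear pressure-to-velocity coupling, at the cost of having to track that the contraction constant is uniformly below one; two small remarks: since the map $\mathbf{v}\mapsto T(\mathbf{v})$ is affine, it is a global contraction and no invariant ball is needed, and the parenthetical about $H^{-1}$ data for the transport equation is unnecessary, since $\mathrm{div}\,\mathbf{v}\in L^{2}(\mathcal{O})$ automatically for $\mathbf{v}\in \mathbf{H}^{1}(\mathcal{O})$, which is what Theorem \ref{dV} requires. Finally, as in the paper, uniqueness should be noted to follow because any solution of (\ref{a1})--(\ref{a5}) is necessarily a fixed point of your map (the pressure being determined by $\mathbf{v}$ through the uniqueness assertion of Theorem \ref{dV}).
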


\begin{proof}[Proof of Lemma \ref{staticwellp}] We give the proof in two parts.
Our beginning point is to resolve the pressure term; this will be
accomplished by applying Theorem \ref{dV} of the Appendix. To this end: If
we initially consider the equation 
\begin{equation}
\xi p+\mathbf{U}\cdot \nabla p+\frac{1}{2}\text{div}(\mathbf{U})p=\sigma ~~%
\text{ in }~\mathcal{O},  \label{above}
\end{equation}%
where $\sigma \in L^{2}(\mathcal{O})$ and $\mathbf{U}\in \mathbf{V}_{0}\cap 
\mathbf{H}^{3}(\mathcal{O})$, we have by Theorem \ref{dV} the existence of a
unique $L^{2}(\mathcal{O})$-function $p$ which is a weak solution of (\ref%
{above}); namely, it satisfies the variational relation%
\begin{equation}
\xi \int_{\mathcal{O}}p\phi d\mathbf{x}-\int_{\mathcal{O}}pdiv(\phi \mathbf{U%
})d\mathbf{x+}\frac{1}{2}\int_{\mathcal{O}}div(\mathbf{U})p\phi =\int_{%
\mathcal{O}}\sigma \phi d\mathbf{x}\text{, \ for all }\phi \in H^{1}(%
\mathcal{O})  \label{weak}
\end{equation}%
(and in particular for $\phi \in \mathcal{D}(\mathcal{O})$; so we infer that
for given $L^{2}$-function $\sigma $, corresponding $L^{2}$-solution $p$
satisfies the PDE (\ref{above}) pointwise a.e.)

Moreover, for $\xi =\xi (\mathbf{U})$ sufficiently large we have the
estimate---see (\ref{est}) of Theorem \ref{dV}, 
\begin{equation}
||p||_{L^{2}(\mathcal{O})}\leq \dfrac{1}{\xi }||\sigma ||_{L^{2}(\mathcal{O}%
)}.  \label{large}
\end{equation}

\medskip

With the well-posedness above, in order to find the existence and uniqueness
of the fluid component $\mathbf{v}$, we now turn our attention to (\ref{a1}%
)--(\ref{a5}). In view of the well-posedness of \ (\ref{above}), we
decompose the fluid term $\mathbf{v}$ and pressure term $p$ as follows: 
\begin{equation}
\mathbf{v}=\mathbf{u}+\widetilde{{\mathbf{v}}_{0}};  \label{v_com}
\end{equation}%
\begin{equation}
p=p[\mathbf{u}]+p[\widetilde{{\mathbf{v}}_{0}}]+p[p^{\ast }],  \label{p_com}
\end{equation}%
where $\mathbf{u}\in \mathbf{V}_{0}$ is the new fluid solution variable, and $%
\widetilde{{\mathbf{v}}_{0}}\in \mathbf{H}^{1}(\mathcal{O})$ is a vector
field which is chosen to satisfy 
\begin{equation}
\widetilde{{\mathbf{v}}_{0}}\Big|_{\partial \mathcal{O}}=%
\begin{cases}
0 & ~\text{ on }~S \\ 
g\mathbf{n} & ~\text{ on }~\Omega .%
\end{cases}
\label{bc}
\end{equation}%
To wit: since boundary data $g\in H^{\frac{1}{2} +\epsilon}_0(\Omega)$ -- with necessarily $\epsilon>0$ -- we can extend by zero the function $g\mathbf n\Big|_{\Omega}=g[1,0,0],$ so as to have a $ \mathbf H^{\frac{1}{2} +\epsilon} $-- function on all of $\partial \mathcal{O}.$ (See e.g., Theorem 3.33, p.95 of \cite{Mc}.) In turn, since the Sobolev Dirichlet trace map from $H^{s}(\mathcal{O})$ into $H^{s-\frac{1}{2}}(\partial \mathcal{O})$ is surjective for $\frac{1}{2}<s<\frac{3}{2},$ then the existence of given $\widetilde{{\mathbf{v}}_{0}}\in \mathbf H^{1+\epsilon}(\mathcal{O})$ is assured. (See e.g., Theorem 3.38 of \cite{Mc}, valid for Lipschitz domains.)

\smallskip

Moreover, the functions $p[\mathbf{u}],~p[\widetilde{{\mathbf{v}}_{0}}],$
and $p[p^{\ast }]$ solve the respective versions of (\ref{above}): 
\begin{align}
\xi p[\mathbf{u}]+\mathbf{U}\cdot \nabla p[\mathbf{u}]+\frac{\text{div}(%
\mathbf{U})}{2}p[\mathbf{u}]=& ~-\text{div}(\mathbf{u})  \label{duh1} \\
\xi p[\widetilde{{\mathbf{v}}_{0}}]+\mathbf{U}\cdot \nabla p[\widetilde{{%
\mathbf{v}}_{0}}]+\frac{\text{div}(\mathbf{U})}{2}p[\widetilde{{\mathbf{v}}%
_{0}}]=& ~-\text{div}(\widetilde{{\mathbf{v}}_{0}}) \\
\xi p[p^{\ast }]+\mathbf{U}\cdot \nabla p[p^{\ast }]+\frac{\text{div}(%
\mathbf{U})}{2}p[p^{\ast }]=& ~p^{\ast },
\end{align}
with estimates---for $\xi =\xi (\mathbf{U})$ large enough, see (\ref{large})
and (\ref{bc}) --- 
\begin{align}
||p[\mathbf{u}]||_{L^2(\mathcal{O})}\leq & ~\dfrac{C}{\xi }||\mathbf{u}||_{%
\mathbf{H}^{1}(\mathcal{O})}  \label{first} \\
||p[\widetilde{{\mathbf{v}}_{0}}]||_{L^2(\mathcal{O})}\leq & ~\dfrac{C}{\xi }||%
\widetilde{{\mathbf{v}}_{0}}||_{\mathbf{H}^{1}(\mathcal{O})}  \notag \\
\leq & ~\dfrac{C}{\xi }||g||_{H_{0}^{1/2+\epsilon }(\Omega )}  \label{second}
\\
||p[p^{\ast }]||_{L^2(\mathcal{O})}\leq & ~\dfrac{C}{\xi }||p^{\ast }||_{\mathcal{%
O}},  \label{third}
\end{align}%
where again, fluid variable $\mathbf{u}$ will be sought after.

\bigskip

Now, the rest part of the proof relies on the application of Lax-Milgram
theorem, by way of solving for $\mathbf{u}$ in (\ref{v_com}). For this
reason, we firstly define the operator $A\in \mathcal{L}(\mathbf{V}_{0},[%
\mathbf{V}_{0}]^{\prime })$ to be, for all $\mathbf{\psi }\in \mathbf{V}_{0}$, 
\begin{align}
\left\langle A\mathbf{u},\mathbf{\psi }\right\rangle _{\mathbf{v}_{0}\times
\lbrack \mathbf{v}_{0}]^{\prime }}=& ~\big(\xi \mathbf{u}+\mathbf{U}\cdot
\nabla \mathbf{u}+\frac{1}{2}\text{div}(\mathbf{U})\mathbf{u},\mathbf{\psi }%
\big)_{\mathcal{O}}  \notag \\
& +\big(\sigma (\mathbf{u}),\epsilon (\mathbf{\psi })\big)_{\mathcal{O}%
}+\eta \big(\mathbf{u},\mathbf{\psi }\big)_{\mathcal{O}}-\big(p[\mathbf{u}],%
\text{div}(\mathbf{\psi })\big)_{\mathcal{O}},  \label{A}
\end{align}%
where again, $p[\mathbf{u}]$ solves (\ref{duh1}).

So, with a view of finding the solution pair $(p,\mathbf{v})$, via the
expressions (\ref{v_com}) and (\ref{p_com}), we are led to consider the
following variational problem: Find $\mathbf{u}\in \mathbf{V}_{0}$ which
solves, for every $\mathbf{\psi }\in \mathbf{V}_{0}$,

\begin{equation}
\left\langle A{\mathbf{u}},\mathbf{\psi }\right\rangle _{\mathbf{v}%
_{0}\times \lbrack \mathbf{v}_{0}]^{\prime }}=\left\langle F,\mathbf{\psi }%
\right\rangle _{\mathbf{v}_{0}\times \lbrack \mathbf{v}_{0}]^{\prime }};
\label{vareq1}
\end{equation}%
where the forcing term $F\in \lbrack \mathbf{V}_{0}]^{\prime }$ is given by 
\begin{align}
\left\langle F,\mathbf{\psi }\right\rangle _{\mathbf{v}_{0}\times \lbrack 
\mathbf{v}_{0}]^{\prime }}& =\left( {\mathbf{v}}^{\ast },\mathbf{\psi }%
\right) _{\mathcal{O}}-\big(\sigma (\widetilde{{\mathbf{v}}_{0}}),\epsilon (%
\mathbf{\psi })\big)_{\mathcal{O}}-\eta (\widetilde{{\mathbf{v}}_{0}},%
\mathbf{\psi })_{\mathcal{O}}  \notag \\
& -\big(\xi \widetilde{{\mathbf{v}}_{0}}+\mathbf{U}\cdot \nabla \widetilde{{%
\mathbf{v}}_{0}}+\frac{1}{2}\text{div}(\mathbf{U})\widetilde{{\mathbf{v}}_{0}%
},\mathbf{\psi }\big)_{\mathcal{O}}  \notag \\
& +\big(p[\widetilde{{\mathbf{v}}_{0}}]+p[p^{\ast }],\text{div}(\mathbf{\psi 
})\big)_{\mathcal{O}},  \label{force}
\end{align}%
for given $\mathbf{\psi }\in \mathbf{V}_{0}.$ After considering the
definition of $\mathbf{V}_{0}$ and using the divergence theorem we note that 
\begin{equation}
\big(\mathbf{U}\cdot \nabla \mathbf{\psi }+\frac{1}{2}\text{div}(\mathbf{U})%
\mathbf{\psi },\mathbf{\psi }\big)_{\mathcal{O}}=0,~~\forall ~\mathbf{\psi }%
\in \mathbf{V}_{0}.  \label{there}
\end{equation}%
Moreover, by estimate (\ref{first}) we have for all $\mathbf{\psi }\in 
\mathbf{V}_{0}$, 
\begin{equation}
\Big\vert\left( p[\mathbf{\psi }],\func{div}(\mathbf{\psi })\right) _{%
\mathcal{O}}\Big\vert\leq \frac{C}{\xi }||\mathbf{\psi }||_{\mathbf{H}^{1}(%
\mathcal{O})}^{2}.  \label{with}
\end{equation}

Combining (\ref{there}) and (\ref{with}) with Korn's inequality---see e.g.,
Theorem 2.6.5, p.93 of \cite{kesavan}---we then have, for $\xi >0$
sufficiently large, 
\begin{equation}
\begin{array}{l}
\left\langle A\mathbf{\psi },\mathbf{\psi }\right\rangle _{\mathbf{v}%
_{0}\times \lbrack \mathbf{v}_{0}]^{\prime }}= 
(\xi +\eta )||\mathbf{\psi }||_{\mathcal{O}}^{2}+\sigma (\mathbf{\psi }%
),\epsilon (\mathbf{\psi })\big)-\big(p[\mathbf{\psi }],\text{div}(\mathbf{%
\psi })\big)_{\mathcal{O}}\geq ~c||\mathbf{\psi }||_{\mathbf{H}^{1}(\mathcal{%
O})}^{2},~~\forall ~\mathbf{\psi }\in \mathbf{V}_{0},%
\end{array}
\label{V_e}
\end{equation}%
where coercivity constant $c>0$ is independent of sufficiently large $\xi >0$%
. Therefore, $A\in \mathcal{L}(\mathbf{V}_{0},[\mathbf{V}_{0}]^{\prime })$
is $\mathbf{V}_{0}$-elliptic for $\xi >0$ large enough. Consequently, by the
Lax-Milgram Theorem, the variational equation (\ref{vareq1}) has a unique
solution ${\mathbf{u}}\in \mathbf{V}_{0}$, which will in turn yield the
solution pair $(\mathbf{v},p)$ of (\ref{a1})--(\ref{a5} through the
relations (\ref{v_com}) and (\ref{p_com}). In particular, from (\ref{v_com})
and (\ref{bc}), $\mathbf{v}\in \mathbf{H}^{1}(\mathcal{O})$ admits of the
decomposition (\ref{crucial}); and since 
\begin{equation*}
\left. \mathbf{v}\right\vert _{\partial \mathcal{O}}\cdot \mathbf{n}=%
\begin{cases}
0 & ~\text{ on }~S \\ 
g & ~\text{ on }~\Omega ,%
\end{cases}%
\end{equation*}%
the obtained solution component $\mathbf{v}$ satisfies the boundary
conditions (\ref{a4})--(\ref{a5}).

\medskip

In addition, from (\ref{vareq1}), (\ref{v_com}) and (\ref{p_com}), $(p,\mathbf{%
v})$ satisfies the variational relation 
\begin{align}
\big(\xi {\mathbf{v}}+\mathbf{U}\cdot \nabla {\mathbf{v}}+\frac{1}{2}\text{%
div}(\mathbf{U}){\mathbf{v}},\mathbf{\psi }\big)_{\mathcal{O}}+(\sigma ({%
\mathbf{v}}),\epsilon (\mathbf{\psi }))_{\mathcal{O}}+\eta ({\mathbf{v}},%
\mathbf{\psi })& -(p,\text{div}(\mathbf{\psi }))_{\mathcal{O}}  \notag \\
=& ({\mathbf{v}}^{\ast },\mathbf{\psi })_{\mathcal{O}},~~\forall ~\mathbf{%
\psi }\in \mathbf{V}_{0}.  \label{L0}
\end{align}

\noindent In particular, if $\mathbf{\psi }\in \lbrack \mathcal{D}(\mathcal{O%
})]^{3}$, we then have 
\begin{align}
-(\text{div}~\sigma ({\mathbf{v}}),\mathbf{\psi })_{\mathcal{O}}+\eta ({\mathbf{v}},%
\mathbf{\psi })_{\mathcal{O}}& +(\nabla p,\mathbf{\psi })_{\mathcal{O}} 
\notag \\
=& -(\xi {\mathbf{v}}+\mathbf{U}\cdot \nabla {\mathbf{v}}+\frac{1}{2}\func{%
div}(\mathbf{U}){\mathbf{v}},\mathbf{\psi })_{\mathcal{O}}+({\mathbf{v}}%
^{\ast },\mathbf{\psi })_{\mathcal{O}}.  \label{L1}
\end{align}

\noindent Since ${\mathbf{v}}\in \mathbf{H}^{1}(\mathcal{O})$, this relation
and the density of $[\mathcal{D}(\mathcal{O})]^{3}$ in $\mathbf{L}^{2}(%
\mathcal{O})$, yield that 
\begin{equation}
-\func{div}\sigma ({\mathbf{v}})+\eta {\mathbf{v}}+\nabla p=-\big(\xi {%
\mathbf{v}}+\mathbf{U}\cdot \nabla {\mathbf{v}}+\frac{1}{2}\func{div}(%
\mathbf{U}){\mathbf{v}}\big)+{\mathbf{v}}^{\ast }  \label{L2}
\end{equation}%
in $L^{2}$-sense. And so $(p,{\mathbf{v}})$ satisfies the coupled system (%
\ref{a2}) and (\ref{a1}) pointwise. (See the remark below (\ref{weak})).

\medskip

Finally, since $[-\text{div}~\sigma ({\mathbf{v}})+\nabla p]\in \mathbf{L}^{2}(%
\mathcal{O})$, and $(p,{\mathbf{v}})\in L^{2}(\mathcal{O})\times \mathbf{H}^{1}(\mathcal{O})$, a classic integration by parts argument will yield the
following trace regularity: 
\begin{equation}
\lbrack \sigma ({\mathbf{v}})\mathbf{n}-p\mathbf{n}]\in \mathbf{H}%
^{-1/2}(\partial \mathcal{O})  \label{trace}
\end{equation}%
(see e.g., Theorem 13.2.3, p.326, of \cite{aubin}). Integrating by parts in
(\ref{L0}), we consequently have for all $\mathbf{\psi }\in \mathbf{V}_{0}.$ 
\begin{align*}
({\mathbf{v}}^{\ast },\mathbf{\psi })_{\mathcal{O}}=& ~(\xi {\mathbf{v}}+%
\mathbf{U}\cdot \nabla {\mathbf{v}}+\frac{1}{2}\text{div}(\mathbf{U}){%
\mathbf{v}},\mathbf{\psi })_{\mathcal{O}}-(\func{div}\sigma ({\mathbf{v}}%
)-\nabla p,\mathbf{\psi })_{\mathcal{O}} \\
& +\eta ({\mathbf{v}},\mathbf{\psi })_{\mathcal{O}}+\langle \sigma ({\mathbf{%
v}})\mathbf{n-}p\mathbf{n},\mathbf{\psi }\rangle _{\partial \mathcal{O}},
\end{align*}%
or, after invoking (\ref{L2}): 
\begin{equation*}
\langle \sigma ({\mathbf{v}})\mathbf{n}-p\mathbf{n},\mathbf{\psi }\rangle
_{\partial \mathcal{O}}=0\text{ for every }\mathbf{\psi }\in \mathbf{V}_{0}.
\end{equation*}%
This orthogonality and the surjectivity of the trace mapping from $\mathbf{H}%
^{1}(\mathcal{O})\rightarrow \mathbf{H}^{1/2}(\partial \mathcal{O})$, allow
us to deduce that the obtained solution pair $(p,{\mathbf{v}})$ satisfies
the boundary condition 
\begin{equation*}
\lbrack \sigma ({\mathbf{v}})\mathbf{n}-p\mathbf{n}]\cdot \boldsymbol{\tau }%
=0\text{ for all }\boldsymbol{\tau }\in TH^{1/2}(\partial \mathcal{O}),
\end{equation*}%
in \emph{weak sense}; i.e., as in (\ref{weak2}) of Remark \ref{weaker}.

\smallskip

Lastly, the necessary continuous dependence estimates (\ref{cdd_0})--(\ref%
{cdd}) come from collecting (\ref{p_com}) and (\ref{first})--(\ref{third})
-- for $p$ in (\ref{cdd_0}); and (\ref{v_com}), (\ref{bc}), (\ref{V_e}), and
(\ref{force}) -- for $\mathbf v$ in (\ref{cdd}). This concludes the proof of Lemma %
\ref{staticwellp}, and so {STEP 1} of the maximality argument. \end{proof}

\bigskip

\noindent \textbf{STEP 2}\smallskip\newline
With Lemma \ref{staticwellp} in hand, we properly deal with the coupled
fluid-structure PDE system (\ref{statice1})--(\ref{staticsys3.5}). Our
solution here will be predicated on finding the structural variable $w_{1}$
which solves the $\Omega $-problem 
\begin{align}
\xi ^{2}w_{1}+\Delta ^{2}w_{1}+[2\nu \partial _{x_{3}}({\mathbf{v}}%
)_{3}]_{\Omega }+\lambda \text{div}({\mathbf{v}})|_{\Omega }-p|_{\Omega }=&
~w_{2}^{\ast }+\xi w_{1}^{\ast }~~\text{ on}~\Omega  \label{staticsys5} \\
w_{1}\big |_{\partial \Omega }=& ~\partial _{\nu }w_{1}\big|_{\partial
\Omega }=0.  \label{staticsys6}
\end{align}

\medskip

Let $(p^{\ast },{\mathbf{v}}^{\ast })\in L^{2}(\mathcal{O})\times \mathbf{L}%
^{2}(\mathcal{O})$ be the pressure and fluid data from (\ref{statice1}). Let 
$z\in H_{0}^{2}(\Omega )$ be given. Then from Lemma \ref{staticwellp}, we
know that the following problem has a unique solution $\left\{ p(z;p^{\ast };%
\mathbf{v}^{\ast }), \mathbf{v}(z;p^{\ast };\mathbf{v}^{\ast })\right\} $: 
\begin{equation}
\begin{array}{l}
\xi p(z;p^{\ast };\mathbf{v}^{\ast })+\mathbf{U}\cdot \nabla p(z;p^{\ast };%
\mathbf{v}^{\ast })+\frac{1}{2}\text{div}(\mathbf{U})p(z;p^{\ast };\mathbf{v}%
^{\ast })+\text{div}({\mathbf{v}}(z;p^{\ast };\mathbf{v}^{\ast }))=~p^{\ast
}~~\text{ in }~\mathcal{O} \\ 
\xi {\mathbf{v}}(z;p^{\ast };\mathbf{v}^{\ast })+\mathbf{U}\cdot \nabla {%
\mathbf{v}}(z;p^{\ast };\mathbf{v}^{\ast })+\frac{1}{2}\text{div}(\mathbf{U})%
{\mathbf{v}}(z;p^{\ast };\mathbf{v}^{\ast }) \\ 
\text{ \ \ \ \ \ \ \ \ \ \ \ \ \ \ \ \ \ \ \ }-\text{div}~\sigma ({\mathbf{v}%
}(z;p^{\ast };\mathbf{v}^{\ast }))+\eta {\mathbf{v}}(z;p^{\ast };\mathbf{v}%
^{\ast })+\nabla p(z;p^{\ast };\mathbf{v}^{\ast })=~{\mathbf{v}}^{\ast }~~%
\text{ in }~\mathcal{O} \\ 
\left( \sigma ({\mathbf{v}}(z;p^{\ast };\mathbf{v}^{\ast }))\mathbf{n}%
-p(z;p^{\ast };\mathbf{v}^{\ast })\mathbf{n}\right) \cdot \boldsymbol{\tau }%
=~0~~\text{ on }~\partial \mathcal{O} \\ 
{\mathbf{v}}(z;p^{\ast };\mathbf{v}^{\ast })\cdot \mathbf{n}=~0~~\text{ on }%
~S \\ 
{\mathbf{v}}(z;p^{\ast };\mathbf{v}^{\ast })\cdot \mathbf{n}=~z~~\text{ on}%
~\Omega .%
\end{array}
\label{staticsys1}
\end{equation}

\medskip

Akin to what was done in {STEP 1}, we decompose the solution of the BVP (\ref%
{staticsys1}) into two parts:%
\begin{eqnarray}
\mathbf{v}(z;p^{\ast };\mathbf{v}^{\ast }) &=&\mathbf{v(z)+v}(p^{\ast };%
\mathbf{v}^{\ast });  \label{d1} \\
p(z;p^{\ast };\mathbf{v}^{\ast }) &=&p(z)+p(p^{\ast };\mathbf{v}^{\ast }),
\label{d2}
\end{eqnarray}

where $(p(z),{\mathbf{v}}(z))\in \mathbf{H}^{1}(\mathcal{O})\times L^{2}(%
\mathcal{O})$ is the solution of the problem 
\begin{align}
\xi p(z)+\mathbf{U}\cdot \nabla p(z)+\frac{1}{2}\text{div}(\mathbf{U})p(z)+%
\text{div}({\mathbf{v}}(z))=& 0~~\text{ in }~\mathcal{O}  \label{huh} \\
\xi {\mathbf{v}(z)}+\mathbf{U}\cdot \nabla {\mathbf{v}}(z)+\frac{1}{2}\text{%
div}(\mathbf{U}){\mathbf{v}}(z)-\text{div}~\sigma ({\mathbf{v}}(z))+\eta {%
\mathbf{v}}(z)+\nabla p(z)=& ~0~~\text{ in }~\mathcal{O} \\
\left( \sigma ({\mathbf{v}}(z))\mathbf{n}-p(z)\mathbf{n}\right) \cdot 
\boldsymbol{\tau }=& ~0~~\text{ on }~\partial \mathcal{O} \\
{\mathbf{v}}(z)\cdot \mathbf{n}=& ~0~~\text{ on }~S \\
{\mathbf{v}}(z)\cdot \mathbf{n}=& ~z~~\text{ on}~\Omega ,  \label{givensys2}
\end{align}%
and $\big(p(p^{\ast },{\mathbf{v}}^{\ast }),{\mathbf{v}}(p^{\ast };{\mathbf{v%
}}^{\ast })\big)\equiv (\overline{p},\overline{{\mathbf{v}}})\in L^{2}(%
\mathcal{O})\times \mathbf{V}_{0}$ ~~is the solution of the problem 
\begin{align}
\xi \overline{p}+\mathbf{U}\cdot \nabla \overline{p}+\frac{1}{2}\text{div}(%
\mathbf{U})\overline{p}+\text{div}\overline{{\mathbf{v}}}=& ~p^{\ast }~\text{
in }~~\mathcal{O}  \label{insert} \\
\xi \overline{{\mathbf{v}}}+\mathbf{U}\cdot \nabla \overline{{\mathbf{v}}}+%
\frac{1}{2}\text{div}(\mathbf{U})\overline{{\mathbf{v}}}-\text{div}~\sigma (%
\overline{{\mathbf{v}}})+\eta \overline{{\mathbf{v}}}+\nabla \overline{p}=& ~%
{\mathbf{v}}^{\ast }~~\text{ in }~~\mathcal{O} \\
\left( \sigma ({\overline{{\mathbf{v}}}})\mathbf{n}-p\mathbf{n}\right) \cdot 
\boldsymbol{\tau }=& ~0~~\text{ on }~\partial \mathcal{O} \\
\overline{{\mathbf{v}}}\cdot \mathbf{n}=& ~0~~\text{ on }~~S \\
\overline{{\mathbf{v}}}\cdot \mathbf{n}=& ~0~~\text{ on }~~\Omega .
\label{insert1}
\end{align}%
Therewith: if we multiply the structural PDE component (\ref{staticsys5}%
)---in solution variables $(p,\mathbf{v},w_{1},w_{2})$---by given $z\in
H_{0}^{2}(\Omega ),$ with associated fluid-pressure solution $(p(z),{\mathbf{%
v}}(z))\,$of (\ref{huh})--(\ref{givensys2}), integrate by parts, and utilize
the boundary conditions in the BVP (\ref{huh})--(\ref{givensys2}), we then
have: 
\begin{align}
\langle w_{2}^{\ast }+\xi w_{1}^{\ast },z\rangle _{\Omega }=& ~\xi
^{2}\langle w_{1},z\rangle _{\Omega }+\langle \Delta w_{1},\Delta z\rangle
_{\Omega }  \notag \\
& +\langle \sigma ({\mathbf{v}})\mathbf{n}-p\mathbf{n},{\mathbf{v}}%
(z)\rangle _{\partial \mathcal{O}}  \notag \\
& \text{(after using (\ref{crucial})--(\ref{crucial3}) of Lemma \ref%
{staticwellp})}  \notag \\
=& ~\xi ^{2}\langle w_{1},z\rangle _{\Omega }+\langle \Delta w_{1},\Delta
z\rangle _{\Omega }+(\text{div}(\sigma ({\mathbf{v}}))-\nabla p,{\mathbf{v}}%
(z))_{\mathcal{O}}  \notag \\
& +(\sigma ({\mathbf{v}}),\epsilon ({\mathbf{v}}(z))_{\mathcal{O}}-(p,\text{%
div}({\mathbf{v}}(z)))_{\mathcal{O}}  \notag \\
=& ~\xi ^{2}\langle w_{1},z\rangle _{\Omega }+\langle \Delta w_{1},\Delta
z\rangle _{\Omega }+\xi ({\mathbf{v}},{\mathbf{v}}(z))_{\mathcal{O}}+(%
\mathbf{U}\cdot \nabla {\mathbf{v}},\mathbf{v}(z))_{\mathcal{O}}  \notag \\
& +(\sigma ({\mathbf{v}}),\epsilon ({\mathbf{v}}(z)))_{\mathcal{O}}+\frac{1}{%
2}(\text{div}(\mathbf{U}){\mathbf{v}},{\mathbf{v}}(z))_{\mathcal{O}}  \notag
\\
& +\eta ({\mathbf{v}},{\mathbf{v}}(z))_{\mathcal{O}}-(p,\text{div}({\mathbf{v%
}}(z)))_{\mathcal{O}}-({\mathbf{v}}^{\ast },{\mathbf{v}}(z))_{\mathcal{O}}.
\label{almost}
\end{align}%
Now, using the first resolvent relation in (\ref{staticsys3.5}) and invoking
the respective solution maps for (\ref{huh})--(\ref{givensys2}) and (\ref%
{insert})--(\ref{insert1}), we may express the (prospective) solution
component $(p,{\mathbf{v}})$ of (\ref{statice1}) as 
\begin{align}
{\mathbf{v}}& ={\mathbf{v}}(\xi w_{1}-w_{1}^{\ast };p^{\ast };\mathbf{v}%
^{\ast })={\mathbf{v}}(\xi w_{1}-w_{1}^{\ast })~+\overline{{\mathbf{v}}}
\label{r1} \\
p& =~p(\xi w_{1}-w_{1}^{\ast };p^{\ast };\mathbf{v}^{\ast })=p(\xi
w_{1}-w_{1}^{\ast })+\overline{p}  \label{r2}
\end{align}%
(cf. (\ref{d1})--(\ref{d2})). With (\ref{almost}) and (\ref{r1})--(\ref{r2})
in mind: Accordingly, if we define an operator $B\in \mathcal{L}%
(H_{0}^{2}(\Omega ),H^{-2}(\Omega ))$ as 
\begin{align}
\big(B(w),z\big)\equiv & ~\xi ^{2}\langle w,z\rangle _{\Omega }+\langle
\Delta w,\Delta z\rangle _{\Omega }  \notag \\
& +\xi ^{2}({\mathbf{v}}(w),{\mathbf{v}}(z))_{\mathcal{O}}+\xi (\mathbf{U}%
\cdot \nabla {\mathbf{v}}(w),{\mathbf{v}}(z))_{\mathcal{O}}+\frac{\xi }{2}%
\big(\text{div}(\mathbf{U}){\mathbf{v}}(w),{\mathbf{v}}(z)\big)_{\mathcal{O}}
\notag \\
& +\big(\xi \sigma ({\mathbf{v}}(w)),\epsilon ({\mathbf{v}}(z))\big)_{%
\mathcal{O}}+\eta \xi \big({\mathbf{v}}(w),{\mathbf{v}}(z))_{\mathcal{O}%
}-\xi (p(w),\text{div}({\mathbf{v}}(z)))_{\mathcal{O}},  \label{AA}
\end{align}%
-- where $(p(w),{\mathbf{v}}(w))$ solves (\ref{huh})--(\ref{givensys2}) with 
$H_{0}^{2}(\Omega )$ boundary data $w$ -- then finding solution $w_{1}\in
H_{0}^{2}(\Omega )$ of the structural PDE component (\ref{staticsys5})--(\ref%
{staticsys6}) is tantamount to finding solution $w_{1}\in H_{0}^{2}(\Omega )$
of the variational equation 
\begin{equation}
\left\langle B(w_{1}),z\right\rangle _{(H^{-2}\times H_{0}^{2})(\Omega )}=%
\mathcal{F}(z)\text{, for all }z\in H_{0}^{2}(\Omega )\text{;}  \label{vareq}
\end{equation}%
where the functional $\mathcal{F}\in H^{-2}(\Omega )$ is given by 
\begin{align*}
\mathcal{F}(z)\equiv & ~\big({\mathbf{v}}^{\ast },{\mathbf{v}}(z)\big)_{%
\mathcal{O}}+\langle w_{2}^{\ast }+\xi w_{1}^{\ast },z\rangle _{\Omega } \\
& +\xi ({\mathbf{v}}(w_{1}^{\ast }),{\mathbf{v}}(z))_{\mathcal{O}}-\xi (%
\overline{{\mathbf{v}}},{\mathbf{v}}(z))_{\mathcal{O}} \\
& +(\mathbf{U}\cdot \nabla {\mathbf{v}}(w_{1}^{\ast }),{\mathbf{v}}(z))_{%
\mathcal{O}}-(\mathbf{U}\cdot \nabla \overline{{\mathbf{v}}},{\mathbf{v}}%
(z))_{\mathcal{O}} \\
& +\frac{1}{2}(\text{div}(\mathbf{U}){\mathbf{v}}(w_{1}^{\ast }),{\mathbf{v}}%
(z))_{\mathcal{O}}-\frac{1}{2}(\text{div}(\mathbf{U})\overline{{\mathbf{v}}},%
{\mathbf{v}}(z))_{\mathcal{O}} \\
& +\eta ({\mathbf{v}}(w_{1}^{\ast }),{\mathbf{v}}(z))_{\mathcal{O}}-\eta (%
\overline{{\mathbf{v}}},{\mathbf{v}}(z))_{\mathcal{O}} \\
& +(\sigma ({\mathbf{v}}(w_{1}^{\ast })),\epsilon ({\mathbf{v}}(z)))_{%
\mathcal{O}}-(\sigma (\overline{{\mathbf{v}}}),\epsilon ({\mathbf{v}}(z)))_{%
\mathcal{O}} \\
& -(p(w_{1}^{\ast }),\text{div}({\mathbf{v}}(z)))_{\mathcal{O}}+(\overline{p}%
,\text{div}({\mathbf{v}}(z)))_{\mathcal{O}}.
\end{align*}

Recall that by Lemma \ref{staticwellp}(iv), one has the following estimate
for the pressure term in (\ref{AA}), for $\xi =\xi (\mathbf{U})$ large
enough: 
\begin{equation*}
\left\Vert p(w)\right\Vert_{L^2(\mathcal{O})} \leq \frac{C}{\xi }\left\Vert
w\right\Vert _{H_{0}^{2}(\Omega )}\text{, \ \ for all }w\in H_{0}^{2}(\Omega
).
\end{equation*}%
By means of this estimate and Korn's inequality, we will have, in a manner
analogous to that in the proof of Lemma \ref{staticwellp}, that the operator 
$B$ is $H_{0}^{2}(\Omega )$-elliptic, for $\xi =\xi (\mathbf{U})$ large
enough. Thus we can use the Lax-Milgram Theorem to solve the variational
equation (\ref{vareq}), or what is the same, recover the solution component $%
w_{1}$ of the resolvent equations (\ref{staticsys5})--(\ref{staticsys6}). In
turn, we will have 
\begin{equation*}
\begin{array}{c}
w_{2}=~\xi w_{1}-w_{1}^{\ast }, \\ 
{\mathbf{v}}=~{\mathbf{v}}(w_{2})+\overline{{\mathbf{v}}}, \\ 
p=~p(w_{2})+\overline{p}.%
\end{array}%
\end{equation*}%
where again $[p(w_{2}),{\mathbf{v}}(w_{2})]$ is the solution to (\ref{huh}%
)--(\ref{givensys2}), and $[\overline p,\overline{{\mathbf{v}}}]$ solves the
system (\ref{insert})--(\ref{insert1}).

This finally establishes the range condition in (\ref{range_0}) for $\xi >0$
sufficiently large. A subsequent application of Lumer-Philips Theorem yields
a contraction semigroup for the $\widehat{\mathcal{A}}:D(\mathcal{A})\subset 
\mathcal{H}\rightarrow \mathcal{H}$. As a consequence, the application of
Theorem 1.1 \cite[Chapter 3.1]{pazy}, p.76, gives the desired result for the
(unperturbed) compressible flow-structure generator $\mathcal{A}$.

\section{Stationary Problem}

\label{static}

Since the stationary problem associated with a dissipative dynamical system
is of interest when studying long-time behavior of solutions \cite{springer}%
, we discuss the linear stationary problem associated with \eqref{system1}--%
\eqref{IC_2}. We briefly discuss the inclusion of nonlinearity in the plate
for the stationary problem in Section \ref{nonlinear}. (Such a discussion is
in line with \cite[p.658]{Chu2013-comp}.) Formally, we introduce the
following problem: 
\begin{align}
& \left\{ 
\begin{array}{l}
\mathbf{U}\cdot \nabla p+\text{div}~\mathbf{u}=0~\text{ in }~\mathcal{O}%
\times (0,\infty ) \\ 
\mathbf{U}\cdot \nabla \mathbf{u}-\text{div}~\sigma (\mathbf{u})+\eta 
\mathbf{u}+\nabla p=0~\text{ in }~\mathcal{O}\times (0,\infty ) \\ 
(\sigma (\mathbf{u})\mathbf{n}-p\mathbf{n})\cdot \boldsymbol{\tau }=0~\text{
on }~\partial \mathcal{O}\times (0,\infty ) \\ 
\mathbf{u}\cdot \mathbf{n}=0~\text{ on }~\partial \mathcal{O}\times
(0,\infty )%
\end{array}%
\right.   \label{system1*} \\
&  \notag \\
& \left\{ 
\begin{array}{l}
\Delta ^{2}w+\left[ 2\nu \partial _{x_{3}}(\mathbf{u})_{3}+\lambda \text{div}%
(\mathbf{u})-p\right] _{\Omega }=0~\text{ on }~\Omega \times (0,\infty ) \\ 
w=\frac{\partial w}{\partial \nu }=0~\text{ on }~\partial \Omega \times
(0,\infty )%
\end{array}%
\right.   \label{IM2*}
\end{align}%
(We note here, as in Remark \ref{weaker}, the boundary condition~ $(\sigma (%
\mathbf{u})\mathbf{n}-p\mathbf{n})\cdot \boldsymbol{\tau }=0$---for $%
\mathbf{\tau }\in TH^{1/2}(\partial \mathcal{O})$---is to be interpreted in
the sense of distributions.)

Note that in terms of the fluid-structure generator $\mathcal{A}:D(\mathcal{A%
})\subset \mathcal{H}\rightarrow \mathcal{H}$, solving the PDE system (\ref%
{system1*})--(\ref{IM2*}) is equivalent to identifying an element in $Null(%
\mathcal{A})$. Alternatively, as in \cite{Chu2013-comp}, the problem (\ref%
{system1*})--(\ref{IM2*}) can be interpreted variationally. This is to say,
we define a \emph{weak solution} to \eqref{system1*}--\eqref{IM2*} to be a
triple $(p,\mathbf{u},w)\in L^{2}(\mathcal{O})\times \mathbf{V}_{0}\times
H_{0}^{2}(\Omega )$, which must satisfy, 
\begin{align}
(\text{div}~\mathbf{u},q)_{\mathcal{O}}& =~\int_{\mathcal{O}}(\text{div}~%
\mathbf{U})(pq)dx+\int_{\mathcal{O}}(\mathbf{U}\cdot \nabla q)pdx
\label{variationally1} \\
a_{\mathcal{O}}(\mathbf{u},\boldsymbol{\psi })-(p,\text{div}~\boldsymbol{%
\psi })_{\mathcal{O}}=& ~-\int_{\mathcal{O}}(\mathbf{U}\cdot \nabla \mathbf{u%
})\boldsymbol{\psi }dx-(\Delta w,\Delta \beta )_{\Omega },
\label{variationally2}
\end{align}%
where the bilinear form $a_{\mathcal{O}}(\cdot ,\cdot )$ is as given in (%
\ref{bi}) for all $q\in W_{\mathbf{U}}$ and all $\boldsymbol{\psi }\in 
\mathbf{V}_{\Omega }$. We take:%
\begin{equation}
W_{\mathbf{U}}=\left\{ q\in L^{2}(\mathcal{O}):\mathbf{U}\cdot \nabla q\in
L^{2}(\mathcal{O})\right\} .  \label{W_U}
\end{equation}%
\begin{equation}
\mathbf{V}_{\Omega }\equiv \left\{ \mathbf{v=v(}\beta )\in \mathbf{H}^{1}(%
\mathcal{O})~:~\left[ \mathbf{v}\cdot \mathbf{n}\right] _{\partial \mathcal{O%
}}=\begin{cases}
0~\text{ on }~S \\ 
\beta \in H_{0}^{2}(\Omega )\text{ \ on \ }\Omega .%
\end{cases}.  \label{V_omega} \right\}
\end{equation}%
Note that by Theorem 5 of \cite{buffa}, and extension by zero of $\beta \in
H_{0}^{2}(\Omega )$, the space $\mathbf{V}_{\Omega }$ is well-defined on the
Lipschitz geometry of $\mathcal{O}.$

We note that (\ref{variationally1})--(\ref{variationally2}) is a \emph{natural%
} definition of a weak (variational) solution, in line with \emph{weak}
solutions \cite{Chu2013-comp} to the dynamic equations \eqref{system1}--%
\eqref{IC_2}. Indeed, we demonstrate that such weak solutions---{\bf should they exist}---are classical
solutions of the PDE system \eqref{system1*}--(\ref{IM2*}).

\begin{lemma}
\label{mark}Suppose that $(p,\mathbf{u},w)\in L^{2}(\mathcal{O})\times 
\mathbf{V}_{0}\times H_{0}^{2}(\Omega )$ and satisfies \eqref{variationally1}--\eqref{variationally2}. Then $(p,\mathbf{u},w)$ satisfies \eqref{system1*}--\eqref{IM2*} almost everywhere, and in fact $\left[ p,\mathbf{u},w,0\right]
\in Null(\mathcal{A})$.
\end{lemma}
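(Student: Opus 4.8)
The plan is to show that any weak solution $(p,\mathbf u,w)$ as in the statement is in fact a classical (a.e.) solution of \eqref{system1*}--\eqref{IM2*}, and then conclude by unwinding definitions that $[p,\mathbf u,w,0]$ lies in $Null(\mathcal A)$. The natural order of attack mirrors the structure of the test-function spaces: first extract the pressure PDE, then the momentum PDE, then the boundary conditions, then the plate equation.

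\medskip
\noindent\textbf{Step 1 (pressure equation).} First I would test \eqref{variationally1} against $q\in\mathcal D(\mathcal O)\subset W_{\mathbf U}$. Integrating the term $\int_{\mathcal O}(\mathbf U\cdot\nabla q)p\,dx$ by parts (legitimate since $\mathbf U\in\mathbf V_0\cap\mathbf H^3(\mathcal O)$ and $q$ has compact support) produces $-\int_{\mathcal O}\mathrm{div}(p\mathbf U)q\,dx = -\int_{\mathcal O}\big(\mathbf U\cdot\nabla p + (\mathrm{div}\,\mathbf U)p\big)q\,dx$ in the distributional sense. Combining with the $\int(\mathrm{div}\,\mathbf U)(pq)$ term, the first variational identity collapses to $(\mathrm{div}\,\mathbf u,q)_{\mathcal O} = -(\mathbf U\cdot\nabla p,q)_{\mathcal O}$ for all $q\in\mathcal D(\mathcal O)$, hence $\mathbf U\cdot\nabla p + \mathrm{div}\,\mathbf u = 0$ in $\mathcal D'(\mathcal O)$; since $\mathrm{div}\,\mathbf u\in L^2(\mathcal O)$ (as $\mathbf u\in\mathbf H^1(\mathcal O)$), this forces $\mathbf U\cdot\nabla p\in L^2(\mathcal O)$, so $p\in W_{\mathbf U}$ and the first equation of \eqref{system1*} holds a.e. This also certifies $p_0:=p$ satisfies (A.i).

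\medskip
\noindent\textbf{Step 2 (momentum equation and interior identification).} Next I would test \eqref{variationally2} against $\boldsymbol\psi\in[\mathcal D(\mathcal O)]^3\subset\mathbf V_\Omega$ (these have $\beta\equiv 0$, so the $(\Delta w,\Delta\beta)_\Omega$ term drops). Expanding $a_{\mathcal O}(\mathbf u,\boldsymbol\psi)=(\sigma(\mathbf u),\epsilon(\boldsymbol\psi))_{\mathcal O}+\eta(\mathbf u,\boldsymbol\psi)_{\mathcal O}$ and integrating by parts, one gets $-\mathrm{div}\,\sigma(\mathbf u)+\eta\mathbf u+\nabla p + \mathbf U\cdot\nabla\mathbf u = 0$ in $\mathcal D'(\mathcal O)$; by density of $[\mathcal D(\mathcal O)]^3$ in $\mathbf L^2(\mathcal O)$ and the fact that the left side is a priori in $\mathbf L^2$ (all terms are, since $\mathbf u\in\mathbf H^1$, $p\in L^2$, and $\mathbf U\cdot\nabla\mathbf u\in\mathbf L^2$), this is an $L^2$ identity, so the second equation of \eqref{system1*} holds a.e. In particular $\mathrm{div}\,\sigma(\mathbf u)-\nabla p\in\mathbf L^2(\mathcal O)$, i.e. (A.ii) holds.

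\medskip
\noindent\textbf{Step 3 (boundary conditions and the trace term).} Since $\mathbf u\in\mathbf V_0$, by definition $\mathbf u\cdot\mathbf n=0$ on $\partial\mathcal O$, which gives the fourth equation of \eqref{system1*} directly (and in particular $\mathbf u\cdot\mathbf n=0$ on $S$ and $=w_t=0$ on $\Omega$, consistent with the stationary case $w_t\equiv 0$). For the tangential stress condition, having established $-\mathrm{div}\,\sigma(\mathbf u)+\nabla p\in\mathbf L^2(\mathcal O)$, a standard integration-by-parts argument (as in the proof of Lemma \ref{staticwellp}, via e.g. Theorem 13.2.3 of \cite{aubin}) shows $[\sigma(\mathbf u)\mathbf n-p\mathbf n]\in\mathbf H^{-1/2}(\partial\mathcal O)$. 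Then, testing \eqref{variationally2} against general $\boldsymbol\psi\in\mathbf V_0\subset\mathbf V_\Omega$ (with $\beta=0$) and using Green's theorem together with the interior momentum identity from Step 2, all interior terms cancel and I would be left with $\langle\sigma(\mathbf u)\mathbf n-p\mathbf n,\boldsymbol\psi\rangle_{\partial\mathcal O}=0$ for all $\boldsymbol\psi\in\mathbf V_0$; surjectivity of the $\mathbf H^1(\mathcal O)\to\mathbf H^{1/2}(\partial\mathcal O)$ trace onto $TH^{1/2}(\partial\mathcal O)$ then yields $(\sigma(\mathbf u)\mathbf n-p\mathbf n)\cdot\boldsymbol\tau=0$ in the distributional sense, i.e. the third equation of \eqref{system1*} and condition (A.iv).

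\medskip
\noindent\textbf{Step 4 (plate equation) and conclusion.} Finally, for general $\boldsymbol\psi=\mathbf v(\beta)\in\mathbf V_\Omega$ with $\beta\in H_0^2(\Omega)$, I would subtract the already-known relations: plug the interior momentum identity and the now-established trace identity into \eqref{variationally2}, integrating $a_{\mathcal O}(\mathbf u,\boldsymbol\psi)$ by parts and keeping track of the boundary contribution on $\Omega$, which reduces to $\langle[2\nu\partial_{x_3}(\mathbf u)_3+\lambda\,\mathrm{div}\,\mathbf u-p]_\Omega,\beta\rangle_\Omega$ (the $\boldsymbol\tau$-components of the stress vanish by Step 3). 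What survives is $(\Delta w,\Delta\beta)_\Omega + \langle[2\nu\partial_{x_3}(\mathbf u)_3+\lambda\,\mathrm{div}\,\mathbf u - p]_\Omega,\beta\rangle_\Omega = 0$ for all $\beta\in H_0^2(\Omega)$, which is exactly the weak form of $\Delta^2 w + [2\nu\partial_{x_3}(\mathbf u)_3+\lambda\,\mathrm{div}\,\mathbf u - p]_\Omega = 0$ with clamped boundary conditions $w=\partial_\nu w=0$ (the latter being built into $H_0^2(\Omega)$); this establishes (A.iii) with $w_0:=w$, $w_1:=w_2:=0$. Having verified all of (A.i)--(A.v) and that every equation of \eqref{system1*}--\eqref{IM2*} holds a.e., one reads off from the definition \eqref{AAA} of $\mathcal A$ that $\mathcal A[p,\mathbf u,w,0]=0$, i.e. $[p,\mathbf u,w,0]\in Null(\mathcal A)$, completing the proof.

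\medskip
I expect the main obstacle to be Step 3 --- pinning down the trace term $[\sigma(\mathbf u)\mathbf n-p\mathbf n]$ rigorously on the Lipschitz boundary and justifying the integration-by-parts / Green's identity at low regularity (only $\mathbf u\in\mathbf H^1$, $p\in L^2$), so that the cancellation of interior terms is valid and the orthogonality against $TH^{1/2}(\partial\mathcal O)$ genuinely yields the distributional boundary condition. The careful bookkeeping of which boundary term lives on $S$ versus $\Omega$ (and the role of $\mathbf u\cdot\mathbf n=0$ killing the normal stress contribution on $S$) also needs attention, though it is routine once the trace functionals are well-defined.
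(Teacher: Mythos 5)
Your proposal is correct and follows essentially the same route as the paper's proof: test \eqref{variationally1} with $q\in\mathcal D(\mathcal O)$ and \eqref{variationally2} with $\boldsymbol\psi\in[\mathcal D(\mathcal O)]^3$ to recover the interior equations in the $L^2$-sense, use the resulting $L^2$-regularity of $\operatorname{div}\sigma(\mathbf u)-\nabla p$ to define $[\sigma(\mathbf u)\mathbf n-p\mathbf n]\in\mathbf H^{-1/2}(\partial\mathcal O)$, test against $\mathbf V_0$ for the tangential condition (A.iv), and then against $\mathbf V_\Omega$ for the plate equation. The only (immaterial) difference is in your Step 4: rather than taking a general $\boldsymbol\psi\in\mathbf V_\Omega$ and invoking Step 3 to kill tangential boundary contributions---which would require a normal/tangential splitting of an $\mathbf H^{1/2}$ trace on the Lipschitz boundary---the paper simply specializes to the purely normal lifting $\boldsymbol\psi=\beta\mathbf n$ on $\Omega$, $\boldsymbol\psi=0$ on $S$, which yields the $\Omega$-term directly.
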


\begin{proof} Here, we essentially mimic the final part of the proof of
Lemma 5.1.

Firstly, in (\ref{variationally1}) we consider $q\in \mathcal{D}(\mathcal{O})$.
An invocation of Green's Theorem then yields
\begin{eqnarray*}
(\func{div}(\mathbf{u}),q)_{\mathcal{O}} &=&~\int_{\mathcal{O}}\func{div}(%
\mathbf{U})(pq)dx+0-\left( \int_{\mathcal{O}}\func{div}(\mathbf{U}%
)(qp)dx+\int_{\mathcal{O}}(\mathbf{U}\cdot \nabla p)qdx\right)  \\
&=&-\int_{\mathcal{O}}(\mathbf{U}\cdot \nabla p)qdx~~\text{ \ for every }q\in 
\mathcal{D}(\mathcal{O}).
\end{eqnarray*}%
The density of $\mathcal{D}(\mathcal{O})$ in $L^{2}(\mathcal{O})$ then
yields that 
\begin{equation}
\mathbf{U}\cdot \nabla p+\func{div}(\mathbf{u})=0\text{ \ in the } L^{2}(%
\mathcal{O})\text{-sense}  \label{r_1}
\end{equation}%
(and so, with $\mathbf u \in \mathbf V_0$, in particular, $p\in W_{\mathbf{U}}$ of (\ref{W_U})).

Secondly, if in (\ref{variationally2}) we take test function $\mathbf{\psi }%
\in \left[ \mathcal{D}(\mathcal{O})\right] ^{3}$, then we infer, upon
integration by parts, that 
\begin{equation}
-\func{div}\sigma (\mathbf{u})+\eta \bu+\nabla p+\mathbf{U}\cdot \nabla 
\mathbf{u}=0\text{ \ in }\mathbf{L}^{2}(\mathcal{O})\text{-sense.}
\label{r_2}
\end{equation}%
Subsequently, from (\ref{r_2}), the fact that $\left\{ p,\mathbf{u}\right\}
\in L^{2}(\mathcal{O})\times \mathbf{H}^{1}(\mathcal{O)}$, and by an
integration by parts, we can (as before) assign a meaning to the boundary trace term~  $%
\left[ \sigma (\mathbf{u)n}-p\mathbf{n}\right] _{\partial \mathcal{O}}$,
viz.,%
\begin{equation}
\sigma (\mathbf{u)n}-p\mathbf{n\in H}^{-\frac{1}{2}}(\partial \mathcal{O}).
\label{r_3}
\end{equation}
Applying this boundary trace to the relation (\ref{variationally2}), we then
have for a test function $\mathbf{\psi }\in \mathbf{V}_{0}\subset \mathbf{V}%
_{\Omega }$, upon an integration by parts (and considering (\ref{r_2})) we
make the inference
\begin{equation}
\left\langle \sigma (\mathbf{u)n}-p\mathbf{n},\mathbf{\psi }\right\rangle
_{\partial \mathcal{O}}=0\text{ \ for every }\mathbf{\psi }\in \mathbf{V}_{0}%
\text{,}  \label{r_3.5}
\end{equation}%
Thus, the following tangential boundary condition is satisified:%
\begin{equation}
\left[ \sigma (\mathbf{u)n}-p\mathbf{n}\right] \cdot \mathbf{\tau }=0\text{
\ for all }\mathbf{\tau }\in TH^{1/2}(\partial \mathcal{O}).\footnote{See again Remark \ref{weaker}.}  \label{r_4}
\end{equation}%

Thirdly, with respect to (\ref{variationally2}), we have upon integration
by parts with variational relation, and an invocation of (\ref{r_2}),%
\begin{equation}
\left\langle \sigma (\mathbf{u)n}-p\mathbf{n},\mathbf{\psi }\right\rangle
_{\partial \mathcal{O}}=-(\Delta w,\Delta \beta )_{\Omega }\text{ \ for
every }\mathbf{\psi }\in \mathbf{V}_{\Omega }.  \label{r_5}
\end{equation}%
In particular, if for given $\beta \in H_{0}^{2}(\Omega )$, we set 
\begin{equation}
\mathbf{\psi }=\left\{ 
\begin{array}{c}
0\text{ \ in }S\text{,} \\ 
\beta \mathbf{n}\text{ \ in }\Omega 
\end{array}%
\right.,  \label{r_6}
\end{equation}%
then this $\mathbf{\psi }\in \mathbf{V}_{\Omega }$ (see e.g., Theorem 3.33
of \cite{Mc}). Applying this test function in (\ref{r_6}) to (\ref{r_5}) --
and using $\left. \mathbf{n}\right\vert _{\Omega }=\left[ 0,0,1\right] $---we have 
\begin{equation*}
\left( \left[ 2\nu \partial _{x_{3}}(\mathbf{u})_{3}+\lambda \text{div}(%
\mathbf{u})-p\right] _{\Omega },\beta \right) _{\Omega }=-(\Delta w,\Delta
\beta )_{\Omega }\text{ \ for every }\mathbf{\beta }\in H_{0}^{2}(\Omega ).
\end{equation*}%
In particular, this holds for ${\beta }\in \mathcal{D}(\Omega )$,
whence we obtain%
\begin{equation}
\Delta ^{2}w+\left[ 2\nu \partial _{x_{3}}(\mathbf{u})_{3}+\lambda \text{div}%
(\mathbf{u})-p\right] =0\text{ in }~L^{2}(\Omega )\text{-sense.}
\label{r_7}
\end{equation}

Upon collecting (\ref{r_1}), (\ref{r_2}), (\ref{r_4}) and (\ref{r_7}), we
then have that variational solution $\left[ p,\mathbf{u},w\right] $ of (\ref%
{variationally1})--(\ref{variationally2}) solves the coupled PDE system (\ref%
{system1*})--(\ref{IM2*}) a.e. (Note that since solution component $\mathbf{u}%
\in \mathbf{V}_{0}$, then its normal component on $\partial \mathcal{O}$ is
zero.)\end{proof}

We note that for any $\boldsymbol{\psi }$ and $\beta $ as above, we can
utilize the weak identities to see that a stationary solution of (\ref%
{variationally1})--(\ref{variationally2}) must satisfy, for all $q\in W_{%
\mathbf{U}}$ and $\mathbf{\psi }\in \mathbf{V}_{\Omega }$, 
\begin{align}
a_{\mathcal{O}}(\mathbf{u},\boldsymbol{\psi })-(p,\text{div}~\boldsymbol{%
\psi })_{\mathcal{O}}+& \big(\Delta w,\Delta \beta \big)_{\Omega }+(\text{div%
}~\mathbf{u},q)  \label{tester} \\
=& -\int_{\mathcal{O}}(\mathbf{U}\cdot \nabla \mathbf{u})\boldsymbol{\psi }%
dx+~\int_{\mathcal{O}}(\text{div}~\mathbf{U})(pq)dx+\int_{\mathcal{O}}(%
\mathbf{U}\cdot \nabla q)pdx.  \notag
\end{align}%
Since $p\in W_{\mathbf{U}}$ (as we saw in the proof of Lemma \ref{mark}), we
may choose $q=p$ and $\boldsymbol{\psi }=\mathbf{u}\in \mathbf{V}_{0}$ in (%
\ref{tester}), invoke Green's theorem and the divergence theorem as above,
to see that 
\begin{equation}
a_{\mathcal{O}}(\mathbf{u},\mathbf{u})=\frac{1}{2}\int_{\mathcal{O}}(\text{%
div}~\mathbf{U})[|p|^{2}+|\mathbf{u}|^{2}]dx.  \label{zero}
\end{equation}

As discussed above, it is clear that the long-time behavior (stability)
properties of solutions to the dynamics \eqref{system1*}--\eqref{IM2*}
depend on the structure of the flow field $\mathbf{U}$. Thus, from (\ref%
{zero}) it is not an unwelcome assumption (see \cite{preprint}) to consider
a divergence-free flow field $\mathbf{U}$. This yields the following theorem.

\begin{lemma}
\label{weakchar} If the ambient vector field $\mathbf{U}$ is divergence free, a
weak solution to (\ref{variationally1})-(\ref{variationally2}) (or
equivalently, a solution to \eqref{system1*}--\eqref{IM2*}) will be a triple
of the form $(c,\mathbf{0},w_{c})$, with $c=$~const, and where $w_{c}\in
H_{0}^{2}(\Omega )$ solves the boundary value  problem 
\begin{equation}
\Delta ^{2}w=c~~\text{ in }~~\Omega \text{, \ }w=\nabla w=0\text{ \ on }%
\partial \Omega \text{.}  \label{bvp_2}
\end{equation}
\end{lemma}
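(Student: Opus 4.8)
We start from the identity \eqref{zero}, which for a divergence-free field $\mathbf{U}$ forces $a_{\mathcal{O}}(\mathbf{u},\mathbf{u}) = 0$. Since $a_{\mathcal{O}}(\mathbf{u},\mathbf{u}) = (\sigma(\mathbf{u}),\epsilon(\mathbf{u}))_{\mathcal{O}} + \eta\|\mathbf{u}\|_{\mathcal{O}}^2$ and $\eta > 0$, the second term alone gives $\mathbf{u} \equiv \mathbf{0}$ in $\mathbf{L}^2(\mathcal{O})$, hence in $\mathbf{V}_0$. (Equivalently, one could use Korn's inequality on the first term together with the boundary conditions defining $\mathbf{V}_0$, but the drag term $\eta$ makes this immediate.) This is the easy direction and I expect no obstacle here.

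**Recovering the pressure.** With $\mathbf{u} \equiv \mathbf{0}$, the weak momentum relation \eqref{variationally2} collapses: for all $\boldsymbol{\psi} \in \mathbf{V}_\Omega$ (written $\boldsymbol{\psi} = \mathbf{v}(\beta)$), we get $-(p,\text{div}~\boldsymbol{\psi})_{\mathcal{O}} = -(\Delta w,\Delta\beta)_{\Omega}$. Taking $\boldsymbol{\psi} \in [\mathcal{D}(\mathcal{O})]^3$ (so $\beta = 0$) yields $(p,\text{div}~\boldsymbol{\psi})_{\mathcal{O}} = 0$, i.e. $\nabla p = 0$ in the distributional sense, so $p = c$ is constant on the (connected) domain $\mathcal{O}$. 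Then reinserting $p \equiv c$ into the relation for general $\boldsymbol{\psi} = \mathbf{v}(\beta) \in \mathbf{V}_\Omega$: by the divergence theorem, $(c,\text{div}~\boldsymbol{\psi})_{\mathcal{O}} = c\langle \boldsymbol{\psi}\cdot\mathbf{n}, 1\rangle_{\partial\mathcal{O}} = c\int_\Omega \beta\, d\Omega$ (using that $\boldsymbol{\psi}\cdot\mathbf{n} = 0$ on $S$ and $= \beta$ on $\Omega$). Hence $(\Delta w,\Delta\beta)_{\Omega} = c\int_\Omega \beta\,d\Omega$ for all $\beta \in H_0^2(\Omega)$, which is precisely the weak form of $\Delta^2 w = c$ with clamped boundary conditions; restricting first to $\beta \in \mathcal{D}(\Omega)$ gives the PDE in $L^2(\Omega)$-sense and then the boundary conditions $w = \partial_\nu w = 0$ come from $w \in H_0^2(\Omega)$ by definition of the space. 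This identifies the solution as $(c,\mathbf{0},w_c)$ with $w_c$ solving \eqref{bvp_2}.

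**Consistency check via the continuity equation.** I would also verify the remaining weak equation \eqref{variationally1} is automatically satisfied: with $\mathbf{u}\equiv\mathbf 0$ and $p\equiv c$, the left side is $0$, and the right side is $\int_{\mathcal{O}}(\text{div}~\mathbf{U})(cq)dx + \int_{\mathcal{O}}(\mathbf{U}\cdot\nabla q)c\,dx$, which vanishes since $\text{div}~\mathbf{U} = 0$ and $\int_{\mathcal{O}}\mathbf{U}\cdot\nabla q\,dx = \langle \mathbf{U}\cdot\mathbf{n}, q\rangle_{\partial\mathcal{O}} = 0$ because $\mathbf{U} \in \mathbf{V}_0$ has zero normal trace. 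So \eqref{variationally1} imposes no further constraint, consistent with $p$ being any constant. The equivalence with \eqref{system1*}--\eqref{IM2*} then follows from Lemma \ref{mark}.

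**Expected main obstacle.** The argument is essentially a bookkeeping exercise once \eqref{zero} is in hand, so there is no deep obstruction. The one point requiring mild care is the use of the divergence theorem on the Lipschitz domain $\mathcal{O}$ to evaluate $(c,\text{div}~\boldsymbol{\psi})_{\mathcal{O}}$ for $\boldsymbol{\psi}\in \mathbf{V}_\Omega$ — one must ensure the boundary pairing $\langle\boldsymbol{\psi}\cdot\mathbf{n},1\rangle_{\partial\mathcal{O}}$ is well defined, which it is since $\boldsymbol{\psi}\in\mathbf{H}^1(\mathcal{O})$ has a well-defined $\mathbf{H}^{1/2}(\partial\mathcal{O})$ trace on the Lipschitz boundary, and its normal component is the extension-by-zero of $\beta\in H_0^2(\Omega)$ (as recorded in the definition \eqref{V_omega} and the remark citing \cite{buffa}). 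Connectedness of $\mathcal{O}$ (here, convexity) is needed to conclude $p$ is a single constant rather than locally constant.
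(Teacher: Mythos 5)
Your proof is correct and takes essentially the same route as the paper: the identity \eqref{zero} with $\operatorname{div}\mathbf{U}=0$ forces $\mathbf{u}\equiv\mathbf{0}$ (the paper invokes Korn's inequality, while you use the drag term $\eta>0$ --- both work), then the momentum relation yields $\nabla p=0$ hence $p=c$, and the plate relation reduces to the clamped biharmonic problem \eqref{bvp_2}. The only cosmetic difference is that the paper argues at the level of the a.e.\ equations \eqref{system1*}--\eqref{IM2*} obtained through Lemma \ref{mark}, whereas you re-derive the same conclusions directly from the weak identities \eqref{variationally1}--\eqref{variationally2} (your consistency check of \eqref{variationally1} is a harmless extra, not needed for the stated implication).
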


\begin{proof} If div$(\mathbf{U})=0$, then from (\ref{zero}) and Korn's
Inequality we have that $\mathbf{u}=0$. Subsequently, the fluid equation in (%
\ref{system1*}) gives $\nabla p=0$, and so $p=c$. In turn, the structural
equation in (\ref{IM2*}) with such $\left\{ \mathbf{u},p\right\} $ becomes
(\ref{bvp_2}). \end{proof}

\section{Plate Nonlinearity}

\label{nonlinear}

The treatment of semilinear, cubic-type nonlinearities in fluid-plate
problems has become popular (see the surveys \cite{berlin11,dcds} and
references therein). In this section we demonstrate well-posedness of mild
solutions to the dynamic problem, as well as discuss the stationary problem, 
\emph{in the presence of the scalar von Karman nonlinearity} \cite{springer}%
. We begin with some basic facts about the von Karman nonlinearity,
introduced in \eqref{IM2**}--\eqref{airy-1}. The first of which revolves
around the \emph{local Lipschitz} property of $f$ from $H_{0}^{2}(\Omega
)\rightarrow L^{2}(\Omega )$. By way of availing ourselves of said Lipschitz
continuity for von Karman plates, we further assume that bounded $\Omega
\subset \mathbb{R}^{2}$ is sufficiently smooth.

This property relies on the so called sharp regularity of the Airy stress
function: Corollary 1.4.4 in \cite{springer}. \ To begin, one has the estimate,%
\begin{equation}
\Vert \left( \Delta _{D}^{2}\right) ^{-1}[u,w]\Vert _{W^{2,\infty }(\Omega
)}\leq C\Vert u\Vert _{2,\Omega }\Vert w\Vert _{2,\Omega },  \label{bi_D}
\end{equation}%
where $\Delta _{D}^{2}$ denotes the biharmonic operator with clamped
boundary conditions. With $v(w)=v(w,w)$ in (\ref{airy-1}), the estimate (%
\ref{bi_D}) above yields 
\begin{equation*}
\Vert v(w)\Vert _{W^{2,\infty }(\Omega )}\leq C\Vert w\Vert _{2,\Omega }^{2},
\end{equation*}%
which, in turn, implies that the Airy stress function $v(w)$ satisfies the inequality 
\begin{equation}
\Vert \lbrack u_{1},v(u_{1})]-[u_{2},v(u_{2})]\Vert _{L^{2}(\Omega )}\leq C%
\Big(\Vert u_{1}\Vert _{H_{0}^{2}(\Omega )}^{2}+\Vert u_{2}\Vert
_{H_{0}^{2}(\Omega )}\Vert _{H_{0}^{2}(\Omega )}^{2}\Big)\Vert
u_{1}-u_{2}\Vert _{H_{0}^{2}(\Omega )}  \label{airy-lip}
\end{equation}%
(see Corollary 1.4.5 in \cite{springer}). Thus, the nonlinearity $%
f(w)=[w,v(w)+F_{0}]$ is locally Lipschitz from $H_{0}^{2}({\Omega })$ \i nto 
$L^{2}(\Omega )$.

The second critical property of the nonlinearity involves the existence of a
potential energy functional $\Pi$ associated with $f$. In the case of the
von Karman nonlinearity, it has the form 
\begin{equation*}
\Pi(w)=\frac14\int_{\Omega}\Big(|\Delta v(w)|^2 -2w[w,F_0] \Big) dx,
\end{equation*}
and possesses the properties that $\Pi$ is a $C^1$-functional on $%
H^2_0(\Omega)$ such that $f$ is a Fr\'echet derivative of $\Pi$: $%
-f(w)=\Pi^{\prime }(w)$. From this it follows that for a smooth function $w$%
: 
\begin{equation*}
\dfrac{d}{dt}\Pi(w)=(\Pi^{\prime }(w),w_t)=-(f(w),w_t)_{\Omega}.
\end{equation*}
Moreover $\Pi(\cdot)$ is locally bounded on $H^2_0(\Omega)$, and there exist 
$\eta<1/2$ and $C\ge 0$ such that 
\begin{equation}  \label{8.1.1c1}
\eta \|\Delta w\|_{\Omega}^2 +\Pi(w)+C \ge 0\;,\quad \forall\, w\in
H^2_0(\Omega)\;.
\end{equation}
The latter fact follows from the bound \cite[Chapter 1.4]{springer} 
\begin{equation*}
||w||^2_{\theta} \le \epsilon\big[ ||\Delta w||^2+||\Delta v(w)||^2\big]%
+C_{\epsilon},~~\theta \in [0,2).
\end{equation*}

\begin{remark}
We note that the Berger and Kirchhoff nonlinearities, for instance discussed
in \cite{supersonic,Chu2013-comp}, satisfy the above properties; they:
(i) are locally Lipschitz $H_0^2(\Omega)\to L^2(\Omega)$, (ii) have a $C^1$
antiderivative $\Pi$ satisfying the above properties.
\end{remark}

\subsection{Nonlinear Dynamic Problem}

We now address the system \eqref{system1}--\ref{IC_2}, taken with act\i ve
plate nonlinearity: 
\begin{equation*}
w_{tt}+\Delta ^{2}w+\left[ 2\nu \partial _{x_{3}}(\mathbf{u})_{3}+\lambda 
\text{div}(\mathbf{u})-p\right] _{\Omega }=[w,v(w)+F_{0}]~\text{ on }~\Omega
\times (0,\infty ).
\end{equation*}%
We will show the well-posedness of \emph{mild solutions} (in the sense of 
\cite{pazy}) in the presence of the von Karman nonlinearity. To th\i s end,
we define a nonlinear operator $\mathcal{F}:\mathcal{H}\rightarrow \mathcal{H%
}$, given by 
\begin{equation*}
\mathcal{F}\big(\lbrack p,\mathbf{u},w_{1},w_{2}]\big)=[0,\mathbf{0}%
,0,f(w_{1})].
\end{equation*}%
This mapping is locally Lipschitz (by the properties of $f$ above), and thus
will be considered as a perturbation to the l\i near flu\i d-structure
Cauchy problem wh\i ch \i s modelled by generator $\mathcal{A}:\mathcal{H}%
\rightarrow \mathcal{H}$ In part\i cular we have the abstract problem \i n
var\i able $\mathbf{y}\in \mathcal{H}$,%
\begin{align}
\mathbf{y}^{\prime }=& ~\mathcal{A}\mathbf{y}+\mathcal{F}(\mathbf{y})
\label{Cauchy2} \\
\mathbf{y}(0)=& ~\mathbf{y}_{0}\in \mathcal{H}.  \label{Cauchy3}
\end{align}

\begin{theorem}
\label{th:nonlin} The nonlinear Cauchy problem in \eqref{Cauchy2}--%
\eqref{Cauchy3} is well-posed in the sense of mild solutions. This is to
say: there is a unique local-in-time mild solution $\mathbf{y}(t)$ on $t\in
\lbrack 0,t_{\text{max}})$ (which is also a weak solut\i on). Moreover, for $%
\mathbf{y}_{0}\in {D}(\mathcal{A})$, the corresponding solution is
strong.

In e\i ther case, when $t_{\text{max}}(\mathbf{y}_{0})<\infty $, we have
that $||\mathbf{y}(t)||_{\mathcal{H}}\rightarrow \infty $ as $t\nearrow t_{%
\text{max}}(y_{0})$.
\end{theorem}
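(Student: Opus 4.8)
The plan is to treat the Cauchy problem \eqref{Cauchy2}--\eqref{Cauchy3} as a locally Lipschitz perturbation of the linear semigroup dynamics generated by $\mathcal A$, and then invoke the standard theory of semilinear evolution equations (see e.g. \cite[Section 6.1]{pazy}). By Theorem \ref{wellp}, $\mathcal A$ generates a $C_0$-semigroup $\{e^{\mathcal A t}\}_{t\ge 0}$ on $\mathcal H$ with $\|e^{\mathcal A t}\|_{\mathcal L(\mathcal H)}\le \exp\big(\tfrac t2\|\text{div}(\mathbf U)\|_\infty\big)$, so in particular $\|e^{\mathcal A t}\|_{\mathcal L(\mathcal H)}\le M$ on every compact time interval. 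Since the nonlinear map $\mathcal F([p,\mathbf u,w_1,w_2])=[0,\mathbf 0,0,f(w_1)]$ only acts through $f$ in the last slot, the first step is to record that $\mathcal F:\mathcal H\to\mathcal H$ is locally Lipschitz: this is exactly the content of estimate \eqref{airy-lip} together with the local Lipschitz property of $f=[w,v(w)+F_0]$ from $H^2_0(\Omega)$ into $L^2(\Omega)$ established in Section \ref{nonlinear}, bounded on balls of $\mathcal H$ by a constant depending on the radius.

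Given these two ingredients, local existence and uniqueness of a mild solution follows by the usual contraction-mapping argument applied to the integral (variation-of-parameters) formulation
\begin{equation*}
\mathbf y(t)=e^{\mathcal A t}\mathbf y_0+\int_0^t e^{\mathcal A(t-s)}\mathcal F(\mathbf y(s))\,ds,
\end{equation*}
on the Banach space $C([0,T];\mathcal H)$ for $T=T(\|\mathbf y_0\|_{\mathcal H})$ sufficiently small: one checks that the right-hand side maps a closed ball of $C([0,T];\mathcal H)$ into itself and is a strict contraction there, using the local Lipschitz bound on $\mathcal F$ and the uniform bound on $e^{\mathcal A t}$. This produces a unique local mild solution $\mathbf y(t)$ on a maximal interval $[0,t_{\max})$, and a standard continuation argument gives the blow-up alternative: either $t_{\max}=\infty$ or $\limsup_{t\nearrow t_{\max}}\|\mathbf y(t)\|_{\mathcal H}=\infty$. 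The improvement to $\limsup\to\lim$ (i.e. $\|\mathbf y(t)\|_{\mathcal H}\to\infty$ as $t\nearrow t_{\max}<\infty$) is routine from the local existence time depending only on the norm of the data: if the norm stayed bounded along a sequence $t_n\nearrow t_{\max}$ one could restart the solution from each $\mathbf y(t_n)$ with a uniform existence time and extend past $t_{\max}$, a contradiction. For $\mathbf y_0\in D(\mathcal A)$, the regularity upgrade to a strong solution follows once one knows $t\mapsto \mathcal F(\mathbf y(t))$ is (locally) Lipschitz in $t$ — which holds because $\mathbf y(\cdot)$ is continuous into $\mathcal H$ and $\mathcal F$ is locally Lipschitz on $\mathcal H$ — via \cite[Theorem 6.1.5 or 6.1.6]{pazy}. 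Finally, one verifies that the mild solution is also a weak solution in the sense of \cite{Chu2013-comp} (as in Remark \ref{notion}) by testing the integral equation against appropriate test functions and passing limits, exactly as in the linear case with the extra term $(f(w),\beta)_\Omega$ carried along.

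The only genuinely nontrivial step is the local Lipschitz continuity of $\mathcal F$ on $\mathcal H$, and this is precisely why the added smoothness assumption on $\Omega$ is invoked in Section \ref{nonlinear}: it is needed for the sharp $W^{2,\infty}$-regularity of the Airy stress function, Corollary 1.4.4 of \cite{springer}, from which \eqref{bi_D} and hence \eqref{airy-lip} follow. Everything else is an application of the abstract semilinear theory of \cite[Chapter 6]{pazy}, with the only book-keeping issue being to carry the $\mathbf U$-dependent exponential bound on $e^{\mathcal A t}$ through the fixed-point estimates (harmless, since on a finite interval it is just a constant). I do not expect the coupling with the fluid-pressure components to cause difficulty here, since $\mathcal F$ does not touch those components and the linear coupling is already absorbed into $\mathcal A$.
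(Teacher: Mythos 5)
Your proposal is correct and follows essentially the same route as the paper, whose proof is simply a citation of Pazy's results on locally Lipschitz perturbations of $C_0$-semigroups (Theorem 1.4, p.~185, and a localized Theorem 1.6, p.~189, of \cite{pazy}) once Theorem \ref{wellp} and the local Lipschitz property of $f:H^2_0(\Omega)\to L^2(\Omega)$ are in hand; your contraction-mapping and continuation arguments are exactly what those theorems package. One small caveat on the strong-solution step: continuity of $\mathbf y(\cdot)$ into $\mathcal H$ composed with the locally Lipschitz map $\mathcal F$ yields only continuity of $t\mapsto\mathcal F(\mathbf y(t))$, not Lipschitz continuity in $t$; the upgrade for $\mathbf y_0\in D(\mathcal A)$ should instead be justified by the reflexivity of the Hilbert space $\mathcal H$ (the localized version of Pazy's Theorem 6.1.5/1.6) or by the $C^1$ (Fr\'echet differentiable) character of the von Karman nonlinearity, which are precisely the theorems you cite.
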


\begin{proof}
With the fluid-structure semigroup $e^{\mathcal At}$ in hand from Theorem \ref{wellp}, this is a direct application of Theorem 1.4~\cite[p.185]{pazy}
and  localized version of Theorem 1.6~\cite[p.189]{pazy}, pertaining to locally Lipschitz perturbations of semigroup solutions.  
\end{proof}In order to guarantee global solutions, i.e., valid solutions of (%
\ref{Cauchy2})--(\ref{Cauchy3}) on $[0,T]$ for any $T>0$, we must utilize the
\textquotedblleft good" structure of $\Pi $. The energy identity, in the
presence of nonlinearity (i.e., when \eqref{IM2} has the term $f(w)$), is
obtained in a standard way using the properties of $\Pi $ (see \cite%
{supersonic,Chu2013-comp}). Consider $\mathbf{y}_{0}=(p_{0},\mathbf{u}%
_{0},w_{0},w_{1})\in \mathcal{H}$ and $\mathbf{U}\in \mathbf{V}_{0}$. Any
mild solution corresponding to (\ref{Cauchy2})--(\ref{Cauchy3}) satisfies: 
\begin{align}
\mathcal{E}\big(p(t),\mathbf{u}(t),w(t),w_{t}(t)\big)+\int_{0}^{t}a_{%
\mathcal{O}}(\mathbf{u}(\tau ),\mathbf{u}(\tau ))d\tau +\Pi (w(t))=& ~%
\mathcal{E}\big(p_{0},\mathbf{u}_{0},w_{0},w_{1}\big)+\Pi (w(0))
\label{balancelaw*} \\
& +\frac{1}{2}\int_{0}^{t}\int_{\mathcal{O}}\text{div}(\mathbf{U})[|p(\tau
)|^{2}+|\mathbf{u}(\tau )|^{2}]dxd\tau .  \notag
\end{align}%
From this a priori relation, Gronwall's inequality, and the bound on the $%
\Pi$ in \eqref{8.1.1c1}, we have the final (nonlinear) well-posedness theorem.

\begin{theorem}
\label{final} For any $T>0$, the Cauchy problem (\ref{Cauchy2})-(\ref%
{Cauchy3}) is well-posed on $\mathcal{H}$ for all $[0,T]$. This is to say
that the PDE problem in \eqref{system1}--\eqref{IC_2}, taking into account
the nonlinear plate equation \eqref{IM2**}, is well-posed in the sense of
mild solutions.

Moreover, in the case of $\text{div}~\mathbf{U }\equiv 0$, we have the \emph{%
global-in-time} estimate for solutions: 
\begin{equation}  \label{globalbound}
\sup_{t \in [0,\infty)} \mathcal{E}(p(t),\mathbf{u}(t),w(t),w_t(t)) \le 
\mathbf{C}(p_0,\mathbf{u}_0,w_0,w_1, F_0).
\end{equation}
\end{theorem}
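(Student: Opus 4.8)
The plan is to bootstrap the local mild solution produced by Theorem \ref{th:nonlin} into a global one. Since that theorem already supplies a local-in-time solution on a maximal interval $[0,t_{\max})$ together with the blow-up alternative ($\|\mathbf y(t)\|_{\mathcal H}\to\infty$ as $t\nearrow t_{\max}$ whenever $t_{\max}<\infty$), it suffices to exhibit an a priori bound on $\mathcal E(\mathbf y(t))=\tfrac12\|\mathbf y(t)\|_{\mathcal H}^{2}$ which is finite on every bounded time interval; then necessarily $t_{\max}=\infty$. That bound will be extracted from the nonlinear energy identity \eqref{balancelaw*}, the nonnegativity of the dissipation form $a_{\mathcal O}(\cdot,\cdot)$, and the ``good-sign'' bound \eqref{8.1.1c1} for the von Karman potential $\Pi$.

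First I would record that, by the computation in Section \ref{diss}, one has $a_{\mathcal O}(\mathbf u,\mathbf u)=(\sigma(\mathbf u),\epsilon(\mathbf u))_{\mathcal O}+\eta\|\mathbf u\|^{2}_{\mathcal O}\ge 0$, so the dissipation integral in \eqref{balancelaw*} may simply be discarded. Using the trivial bounds $\|\Delta w(t)\|^{2}_{\Omega}\le 2\mathcal E(\mathbf y(t))$ and $\|p(t)\|^{2}_{\mathcal O}+\|\mathbf u(t)\|^{2}_{\mathcal O}\le 2\mathcal E(\mathbf y(t))$ (immediate from the definition of $\mathcal E$), together with \eqref{8.1.1c1}, which furnishes a constant $\kappa<\tfrac12$ with $-\Pi(w(t))\le \kappa\|\Delta w(t)\|^{2}_{\Omega}+C\le 2\kappa\,\mathcal E(\mathbf y(t))+C$, the identity \eqref{balancelaw*} yields
\begin{equation*}
\mathcal E(\mathbf y(t))\ \le\ \mathcal E(\mathbf y_0)+\Pi(w_0)+2\kappa\,\mathcal E(\mathbf y(t))+C+\|\text{div}(\mathbf U)\|_{\infty}\int_0^t\mathcal E(\mathbf y(\tau))\,d\tau .
\end{equation*}
Absorbing $2\kappa\,\mathcal E(\mathbf y(t))$ on the left (legitimate because $2\kappa<1$) gives
\begin{equation*}
(1-2\kappa)\,\mathcal E(\mathbf y(t))\ \le\ \mathcal E(\mathbf y_0)+\Pi(w_0)+C+\|\text{div}(\mathbf U)\|_{\infty}\int_0^t\mathcal E(\mathbf y(\tau))\,d\tau .
\end{equation*}

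Next I would control the data side. The explicit form of $\Pi$ together with the sharp Airy regularity \eqref{bi_D}--\eqref{airy-lip} yields $|\Pi(w_0)|\le C\big(1+\|w_0\|^{4}_{H_{0}^{2}(\Omega)}\big)\big(1+\|F_0\|_{H^{4}(\Omega)}\big)$, so $\mathcal E(\mathbf y_0)+\Pi(w_0)+C$ is bounded above by a continuous function of the data, which we call $\mathbf C(p_0,\mathbf u_0,w_0,w_1,F_0)$. Gronwall's inequality applied to the preceding estimate then produces, on $[0,t_{\max})$,
\begin{equation*}
\mathcal E(\mathbf y(t))\ \le\ \frac{\mathbf C(p_0,\mathbf u_0,w_0,w_1,F_0)}{1-2\kappa}\,\exp\!\Big(\frac{\|\text{div}(\mathbf U)\|_{\infty}}{1-2\kappa}\,t\Big),
\end{equation*}
which is finite on every bounded interval. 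Hence $\|\mathbf y(t)\|_{\mathcal H}$ cannot blow up in finite time, the blow-up alternative of Theorem \ref{th:nonlin} forces $t_{\max}=\infty$, and the local mild (resp. strong, for $\mathbf y_0\in D(\mathcal A)$) solution is global; this is well-posedness on $[0,T]$ for every $T>0$. When in addition $\text{div}\,\mathbf U\equiv 0$, the last integral in \eqref{balancelaw*} vanishes, so the same chain of inequalities gives directly $(1-2\kappa)\,\mathcal E(\mathbf y(t))\le \mathbf C(p_0,\mathbf u_0,w_0,w_1,F_0)$ for all $t\ge 0$, which is precisely \eqref{globalbound} (no Gronwall step needed).

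The one step I expect to require genuine care is the validity of the energy identity \eqref{balancelaw*} for \emph{mild} rather than strong solutions. For $\mathbf y_0\in D(\mathcal A)$ the corresponding strong solution satisfies \eqref{balancelaw*} by testing \eqref{system1}--\eqref{IC_2} (with the nonlinear plate equation \eqref{IM2**}) against $p$, $\mathbf u$ and $w_t$, invoking the boundary conditions encoded in $D(\mathcal A)$ (the cancellation of boundary terms being exactly as in Section \ref{diss}) and using $\tfrac{d}{dt}\Pi(w)=-(f(w),w_t)_{\Omega}$; the extension to general $\mathbf y_0\in\mathcal H$ then follows by density of $D(\mathcal A)$ in $\mathcal H$, the continuous dependence guaranteed by the local well-posedness of \eqref{Cauchy2}--\eqref{Cauchy3}, and the continuity of $\Pi$ and local Lipschitz continuity of $f$ on $H_{0}^{2}(\Omega)$. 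This is the ``standard'' argument alluded to above, and once it is in place everything else is bookkeeping.
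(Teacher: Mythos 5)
Your proposal is correct and follows exactly the route the paper indicates (but only sketches by citation): local solution plus blow-up alternative from Theorem \ref{th:nonlin}, the nonlinear energy identity \eqref{balancelaw*}, absorption of $-\Pi(w(t))$ via \eqref{8.1.1c1}, and Gronwall, with the $\text{div}\,\mathbf{U}\equiv 0$ case giving \eqref{globalbound} directly. Your attention to justifying \eqref{balancelaw*} for mild solutions by density from strong solutions is precisely the ``standard'' step the paper delegates to the cited references.
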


\begin{proof}
The proof follows a standard tack, and is along the lines of \cite{Chu2013-comp} (in the case of these dynamics, taken with $\mathbf U=0$), or \cite{supersonic,webster} (for the case of compressible, inviscid gas dynamics). See also \cite[Chapter 2.3]{springer} for an abstract discussion of nonlinear second order evolutions with locally Lipschitz perturbations.
\end{proof}

\subsection{Nonlinear Stationary Problem}

We now briefly mention the nonlinear stationary problem in the case when $%
\text{div}~\mathbf{U }\equiv 0$. As noted above (and, as is evident from %
\eqref{balancelaw*} and \eqref{globalbound}), this is the primary case of
interest for \emph{long time behavior} analysis of the dynamics %
\eqref{system1}--\eqref{IM2}.

We note that the analysis above in Section \ref{static} obtains identically
in the presence of plate nonlinearity. Thus, with $\mathbf{U}$ divergence
free as above, we have the equivalence of weak solutions to the system 
\begin{align}
& \left\{ 
\begin{array}{l}
\mathbf{U}\cdot \nabla p+\text{div}~\mathbf{u}=0~\text{ in }~\mathcal{O}%
\times (0,\infty ) \\ 
\mathbf{U}\cdot \nabla \mathbf{u}-\text{div}~\sigma (\mathbf{u})+\eta 
\mathbf{u}+\nabla p=0~\text{ in }~\mathcal{O}\times (0,\infty ) \\ 
(\sigma (\mathbf{u})\mathbf{n}-p\mathbf{n})\cdot \boldsymbol{\tau }=0~\text{
on }~\partial \mathcal{O}\times (0,\infty ) \\ 
\mathbf{u}\cdot \mathbf{n}=0~\text{ on }~S\times (0,\infty ) \\ 
\mathbf{u}\cdot \mathbf{n}=0~\text{ on }~\Omega \times (0,\infty )%
\end{array}%
\right.  \label{system1***} \\
&  \notag \\
& \left\{ 
\begin{array}{l}
\Delta ^{2}w+\left[ 2\nu \partial _{x_{3}}(\mathbf{u})_{3}+\lambda \text{div}%
(\mathbf{u})-p\right] _{\Omega }=[w,v(w)+F_{0}]~\text{ on }~\Omega \times
(0,\infty ) \\ 
w=\frac{\partial w}{\partial \nu }=0~\text{ on }~\partial \Omega \times
(0,\infty )%
\end{array}%
\right.  \label{IM2***}
\end{align}%
and the following biharmonic problem: Find $w\in H_{0}^{2}(\Omega )$ such that 
\begin{equation}
(\Delta w,\Delta v)_{\Omega }-(f(w),v)=(c,v)_{\Omega },~~\forall ~~v\in
H_{0}^{2}(\Omega ).  \label{nonstat}
\end{equation}%
(Note, this reduction is equivalent to that in \cite[Section 3.3]%
{Chu2013-comp}.) With property \eqref{8.1.1c1} of $\Pi $, it is well known
that (for a given $c$) the solutions to \eqref{nonstat}, denoted by $%
\mathcal{N}_{c}$, form a nonempty, compact set in $H_{0}^{2}(\Omega )$ \cite%
{Chu2013-comp,ChuRyz2011,springer}\footnote{%
The structure of $\mathcal{N}_{c}$ is dependent upon the in-plane forcing $%
F_{0}$}. Th\i s leaves us with the final theorem:

\begin{theorem}
Assume $\text{div}~\mathbf{U }\equiv 0$. Weak solutions to \eqref{system1***}%
--\eqref{IM2***} are fully characterized by points of the form: 
\begin{equation*}
(c,\mathbf{0},w_c),~~w_c \in \mathcal{N}_c \subset \subset H_0^2(\Omega).
\end{equation*}
\end{theorem}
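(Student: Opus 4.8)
The plan is to establish the two inclusions separately: every weak solution of \eqref{system1***}--\eqref{IM2***} has the asserted form, and conversely every point $(c,\mathbf{0},w_c)$ with $w_c\in\mathcal{N}_c$ is a weak solution; the compactness assertion $\mathcal{N}_c\subset\subset H_0^2(\Omega)$ is then quoted from the known theory of the von Karman stationary problem.

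For the forward inclusion, I would first note that the weak-to-strong passage of Lemma \ref{mark} goes through unchanged when the right-hand side of the plate equation is the nonlinear term $f(w)$ rather than $0$: indeed $f(w)\in L^2(\Omega)$ for $w\in H_0^2(\Omega)$ by the local Lipschitz property of $f$ recorded above, so a weak solution $(p,\mathbf{u},w)$ solves \eqref{system1***}--\eqref{IM2***} almost everywhere. The derivation of the identity \eqref{zero} used only the fluid momentum and mass equations together with the fact that a test field $\boldsymbol{\psi}=\mathbf{u}\in\mathbf{V}_0$ has vanishing normal trace (which annihilates the plate contribution); hence \eqref{zero} persists in the present setting, and with $\text{div}~\mathbf{U}\equiv 0$ it collapses to $a_{\mathcal{O}}(\mathbf{u},\mathbf{u})=0$. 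Coercivity of $a_{\mathcal{O}}(\cdot,\cdot)$ on $\mathbf{V}_0$ --- via Korn's inequality and the $\eta\|\mathbf{u}\|^2$ term in \eqref{bi} --- then forces $\mathbf{u}=\mathbf{0}$. Inserting this into the momentum equation yields $\nabla p=0$ on the connected set $\mathcal{O}$, so $p\equiv c$ for some constant $c$; the plate equation in \eqref{IM2***} with $\mathbf{u}=\mathbf{0}$, $p\equiv c$ reads $\Delta^2 w - c = f(w)$, i.e., $w$ satisfies the variational identity \eqref{nonstat}, so $w\in\mathcal{N}_c$.

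For the reverse inclusion, I would substitute $p\equiv c$, $\mathbf{u}=\mathbf{0}$, $w=w_c\in\mathcal{N}_c$ directly into the weak formulation \eqref{variationally1}--\eqref{variationally2} (with $f(w)$ inserted in the plate balance). Every term carrying $\mathbf{U}$ drops: $\int_{\mathcal{O}}(\text{div}~\mathbf{U})(pq)\,dx=0$ and $\int_{\mathcal{O}}(\mathbf{U}\cdot\nabla\mathbf{u})\boldsymbol{\psi}\,dx=0$ are immediate, while $\int_{\mathcal{O}}(\mathbf{U}\cdot\nabla q)\,c\,dx = c\langle q,\mathbf{U}\cdot\mathbf{n}\rangle_{\partial\mathcal{O}} - c\int_{\mathcal{O}}q\,\text{div}~\mathbf{U}=0$ because $\mathbf{U}\in\mathbf{V}_0$ and $\text{div}~\mathbf{U}\equiv 0$. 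What survives of \eqref{variationally2} is $-(c,\text{div}~\boldsymbol{\psi})_{\mathcal{O}} = -c\int_\Omega\beta$ on the left (using that the normal component of $\boldsymbol{\psi}$ is $\beta$ on $\Omega$ and $0$ on $S$) and $-(\Delta w_c,\Delta\beta)_\Omega + (f(w_c),\beta)_\Omega$ on the right, and these agree precisely by the defining relation \eqref{nonstat} of $\mathcal{N}_c$; \eqref{variationally1} reduces to $0=0$. Hence $(c,\mathbf{0},w_c)$ is a weak solution.

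Finally, nonemptiness and compactness of $\mathcal{N}_c$ in $H_0^2(\Omega)$ follow from property \eqref{8.1.1c1} of the potential $\Pi$ by the standard argument for von Karman (and Berger/Kirchhoff) plates; see \cite{springer,Chu2013-comp,ChuRyz2011}. The only point requiring any care is the first --- confirming that Lemma \ref{mark} is genuinely insensitive to the nonlinearity --- and this is dispatched by the $L^2$-regularity of $f(w)$; everything else is a direct transcription of the linear analysis of Section \ref{static} combined with the already-established structure of $\Pi$.
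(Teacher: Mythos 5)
Your proposal is correct and follows essentially the same route as the paper: it observes that the stationary analysis of Section \ref{static} (Lemma \ref{mark} and the reduction via \eqref{zero} and Korn's inequality) carries over verbatim in the presence of $f(w)$, reduces the problem to the biharmonic identity \eqref{nonstat}, and quotes nonemptiness and compactness of $\mathcal{N}_c$ from the von Karman literature. Your write-up is simply a more explicit verification (both directions of the equivalence, plus the $L^2$-regularity of $f(w)$ justifying the nonlinear version of Lemma \ref{mark}) of what the paper states in compressed form.
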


\section{Acknowledgments}

The authors would like to thank both anonymous referees for their careful
reading of the paper, and thoughtful feedback which improved the quality of
the paper.

The first and third author would like to thank the National Science
Foundation, and acknowledge their partial funding from NSF Grants
DMS-1211232 and DMS-1616425 (G. Avalos) and NSF Grant DMS-1504697 (J.T.
Webster).

\section{Appendix}

For the reader's convenience, we provide an explicit proof for the
well-posedness of the (uncoupled) pressure equation (\ref{above}).

\begin{theorem}
\label{dV}(See \cite{dV} and \cite{LaxPhil}) Consider the linear equation 
\begin{equation}
k\lambda +\mathbf{v}\cdot \nabla \lambda +\frac{1}{2}\lambda \func{div}(%
\mathbf{v})=G\text{ \ in }\mathcal{O},  \label{linear}
\end{equation}%
(as before, $\mathcal{O}\subset \mathbb{R}^{3}$ is a bounded, convex
domain). Also, the parameter $k>0$ and forcing term $G\in L^{2}(\mathcal{O})$%
. Moreover, the fixed vector field $\mathbf{v}$ in (\ref{linear}) is in $%
\mathbf{H}^{3}(\mathcal{O})$ and further satisfies:%
\begin{equation}
\begin{array}{l}
\text{(i) }\mathbf{v}\cdot \mathbf{n}=0\text{ \ on }\partial \mathcal{O}%
\text{;} \\ 
\text{(ii) }2\left\Vert \nabla \mathbf{v}\right\Vert _{L^{\infty }(\mathcal{O%
})}+\frac{C_{S}}{2}meas(\mathcal{O})^{\frac{1}{6}}\left\Vert \Delta \func{div%
}(\mathbf{v})\right\Vert _{\mathcal{O}}\leq k,%
\end{array}
\label{v_a}
\end{equation}%
where $C_{S}>0$ is a constant which gives rise to the Sobolev embedding
inequality,%
\begin{equation}
\left\Vert f\right\Vert _{L^{6}(\mathcal{O})}\leq \sqrt{C_{S}}\left\Vert
\nabla f\right\Vert _{\mathcal{O}}\text{ }\ \text{for all }f\in H_{0}^{1}(%
\mathcal{O}).  \label{sob}
\end{equation}%
Then for given $G\in L^{2}(\mathcal{O})$, there exists a unique $\lambda \in
L^{2}(\mathcal{O})$ which is a weak solution of (\ref{linear}). Moreover,
the solution satisfies the bound 
\begin{equation}
\Vert \lambda \Vert _{\mathcal{O}}\leq \dfrac{1}{k}\Vert G\Vert _{\mathcal{O}%
}.  \label{est}
\end{equation}%
By the $L^{2}(\mathcal{O})$-function $\lambda $ being a weak solution of (%
\ref{linear}), we mean that it satisfies the following variational relation:%
\begin{equation}
k\int_{\mathcal{O}}\lambda \varphi d\mathcal{O-}\int_{\mathcal{O}}\lambda (%
\mathbf{v}\cdot \nabla \varphi )d\mathcal{O-}\frac{1}{2}\int_{\mathcal{O}%
}\lambda \func{div}(\mathbf{v})\varphi d\mathcal{O}=\int_{\mathcal{O}%
}G\varphi d\mathcal{O}\text{ \ for every }\varphi \in H^{1}(\mathcal{O}).
\label{var}
\end{equation}
\end{theorem}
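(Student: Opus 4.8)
The plan is to treat the map $L\lambda := k\lambda + \mathbf{v}\cdot\nabla\lambda + \tfrac12\lambda\,\text{div}(\mathbf{v})$ as a first–order transport-type operator and to prove well-posedness by the classical ``transpose'' (duality) method, which simultaneously delivers the sharp bound (\ref{est}). The starting point is a G\aa rding-type identity. Since $\mathbf{v}\in\mathbf{H}^{3}(\mathcal{O})\hookrightarrow \mathbf{W}^{1,\infty}(\mathcal{O})$ (Sobolev embedding in $\mathbb{R}^{3}$), both $L$ and its formal adjoint $L^{*}\varphi := k\varphi - \mathbf{v}\cdot\nabla\varphi - \tfrac12\varphi\,\text{div}(\mathbf{v})$ send $H^{1}(\mathcal{O})$ boundedly into $L^{2}(\mathcal{O})$, and an integration by parts using $\mathbf{v}\cdot\mathbf{n}=0$ on $\partial\mathcal{O}$ gives, for every $\psi\in H^{1}(\mathcal{O})$ (taking real parts if the spaces are complexified),
\begin{equation*}
\text{Re}\,(L\psi,\psi)_{\mathcal{O}} = k\|\psi\|_{\mathcal{O}}^{2} + \tfrac12\int_{\mathcal{O}}\mathbf{v}\cdot\nabla\big(|\psi|^{2}\big) + \tfrac12\int_{\mathcal{O}}|\psi|^{2}\,\text{div}(\mathbf{v}) = k\|\psi\|_{\mathcal{O}}^{2},
\end{equation*}
since $\int_{\mathcal{O}}\mathbf{v}\cdot\nabla(|\psi|^{2}) = -\int_{\mathcal{O}}|\psi|^{2}\,\text{div}(\mathbf{v})$ by the divergence theorem and $\mathbf{v}\cdot\mathbf{n}=0$. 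The same identity holds for $L^{*}$, which is $L$ with $\mathbf{v}$ replaced by $-\mathbf{v}$ (and $-\mathbf{v}$ satisfies all the hypotheses (\ref{v_a})). Hence $\|L\psi\|_{\mathcal{O}}\ge k\|\psi\|_{\mathcal{O}}$ and $\|L^{*}\varphi\|_{\mathcal{O}}\ge k\|\varphi\|_{\mathcal{O}}$ for all $\psi,\varphi\in H^{1}(\mathcal{O})$; in particular $L^{*}$ is injective on $H^{1}(\mathcal{O})$. (The $\mathbf{H}^{3}$-smoothness and hypothesis (\ref{v_a})(ii) are what make all these manipulations legitimate and, in the characteristic picture, keep the transported coefficient $k+\tfrac12\text{div}(\mathbf{v})$ bounded below by a positive constant.)

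With this estimate in hand, existence of a weak solution satisfying (\ref{est}) follows from Hahn--Banach together with the Riesz representation theorem. On the linear subspace $\mathcal{W}:=L^{*}\big(H^{1}(\mathcal{O})\big)\subset L^{2}(\mathcal{O})$, define the functional $T\colon L^{*}\varphi\mapsto (G,\varphi)_{\mathcal{O}}$. It is well defined by injectivity of $L^{*}$, and bounded on $\mathcal{W}$ since $|T(L^{*}\varphi)| = |(G,\varphi)_{\mathcal{O}}|\le \|G\|_{\mathcal{O}}\|\varphi\|_{\mathcal{O}}\le \tfrac1k\|G\|_{\mathcal{O}}\,\|L^{*}\varphi\|_{\mathcal{O}}$. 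Extend $T$ to all of $L^{2}(\mathcal{O})$ with the same norm $\le \tfrac1k\|G\|_{\mathcal{O}}$ and represent it as $T(\cdot) = (\lambda,\cdot)_{\mathcal{O}}$ with $\lambda\in L^{2}(\mathcal{O})$, $\|\lambda\|_{\mathcal{O}}\le \tfrac1k\|G\|_{\mathcal{O}}$. Testing against $L^{*}\varphi$ for arbitrary $\varphi\in H^{1}(\mathcal{O})$ gives exactly $(\lambda,L^{*}\varphi)_{\mathcal{O}} = (G,\varphi)_{\mathcal{O}}$, which unwinds to the variational relation (\ref{var}); so $\lambda$ is a weak solution obeying (\ref{est}), and by restricting to $\varphi\in\mathcal{D}(\mathcal{O})$ it solves (\ref{linear}) in the pointwise-a.e.\ sense.

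For uniqueness, let $\lambda\in L^{2}(\mathcal{O})$ satisfy (\ref{var}) with $G=0$. Taking $\varphi\in\mathcal{D}(\mathcal{O})$ yields $L\lambda = 0$ in $\mathcal{D}'(\mathcal{O})$; since $k\lambda+\tfrac12\lambda\,\text{div}(\mathbf{v})\in L^{2}(\mathcal{O})$, this forces $\mathbf{v}\cdot\nabla\lambda\in L^{2}(\mathcal{O})$. The proof is then closed by the renormalization (chain-rule) property for transport equations with Lipschitz coefficients: mollifying $\lambda$ and invoking Friedrichs' commutator lemma---valid because $\mathbf{v}\in\mathbf{W}^{1,\infty}(\mathcal{O})$, with the tangency $\mathbf{v}\cdot\mathbf{n}=0$ controlling the behaviour near $\partial\mathcal{O}$---gives $\mathbf{v}\cdot\nabla(|\lambda|^{2}) = 2\,\text{Re}\,\big(\overline{\lambda}\,\mathbf{v}\cdot\nabla\lambda\big)$ in $L^{1}(\mathcal{O})$, whence $k|\lambda|^{2} + \tfrac12\,\text{div}(|\lambda|^{2}\mathbf{v}) = 0$ a.e.; integrating over $\mathcal{O}$ and using $\mathbf{v}\cdot\mathbf{n}=0$ kills the boundary term and leaves $k\|\lambda\|_{\mathcal{O}}^{2}=0$, so $\lambda=0$. (Alternatively, both uniqueness and an independent existence proof follow from the method of characteristics: the $C^{1}$ flow of $\mathbf{v}$ leaves $\overline{\mathcal{O}}$ invariant by tangency, $k+\tfrac12\text{div}(\mathbf{v})\ge c\,k>0$ by (\ref{v_a})(ii), and integrating the transported scalar ODE backward in time produces an explicit solution formula whose $L^{2}$-norm satisfies (\ref{est}); this is the route of \cite{dV,LaxPhil}.)

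I expect the only real obstacle to be the boundary analysis: justifying the integrations by parts and, above all, the renormalization step on the merely Lipschitz domain $\mathcal{O}$ with a drift $\mathbf{v}$ that is tangent---but not vanishing---on $\partial\mathcal{O}$. This is precisely where the $\mathbf{H}^{3}$-regularity of $\mathbf{v}$ and the structural hypothesis (\ref{v_a}) are needed, and where one may, if desired, simply invoke the stationary-transport results of \cite{dV,LaxPhil} rather than reproving them from scratch.
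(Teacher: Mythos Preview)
Your argument is correct, and it follows a genuinely different route from the paper's proof. The paper proceeds by \emph{elliptic regularization}: it adds $-\epsilon\Delta$ with homogeneous Dirichlet data, invokes convexity of $\mathcal{O}$ (via Grisvard) to get $\lambda_\epsilon\in H^2\cap H_0^1$, and then tests the regularized equation with $\Delta\lambda_\epsilon$ to obtain $\epsilon$-uniform $H^1$ bounds; it is precisely in this step that hypothesis~(\ref{v_a})(ii) and the full $\mathbf{H}^3$ regularity of $\mathbf{v}$ are consumed, and passing $\epsilon\to 0$ (first for $G\in H_0^1$, then by density) yields the weak solution. Your duality approach, by contrast, rests only on the $L^2$ identity $\mathrm{Re}\,(L\psi,\psi)_{\mathcal{O}}=k\|\psi\|_{\mathcal{O}}^2$ and so needs neither convexity nor condition~(ii) to produce existence together with the sharp bound~(\ref{est}); in this sense it is more elementary and proves slightly more than stated. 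What the paper's method buys in return is the intermediate regularity $\lambda\in H_0^1(\mathcal{O})$ for data $G\in H_0^1(\mathcal{O})$, which drops out of the uniform bound~(\ref{bvp6}) and may be useful downstream. On uniqueness the situation is reversed: the paper dispatches it in a single phrase (``inherent dissipativity''), whereas your renormalization argument via the Friedrichs commutator lemma actually supplies the missing details---and you are right to flag the Lipschitz boundary with tangential drift as the place requiring care.
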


\begin{proof}[Proof of Theorem \ref{dV}:] In large part, the present proof is
taken from that of Appendix I of \cite{dV} (on p.541)---see also \cite%
{LaxPhil}---which however was undertaken on the assumption that geometry $%
\mathcal{O}$ is smooth. Accordingly, adjustments are made here for general
convex domain $\mathcal{O}$, as well as for the perturbation $\frac{\lambda 
}{2}\func{div}(\mathbf{v})$.

Given $\epsilon >0$, we denote $\lambda _{\epsilon }$ to be the solution of
the following regularized boundary value problem:%
\begin{equation}
\left\{ 
\begin{array}{l}
-\epsilon \Delta \lambda _{\epsilon }+k\lambda _{\epsilon }+\mathbf{v}\cdot
\nabla \lambda _{\epsilon }+\frac{1}{2}\lambda _{\epsilon }\func{div}(%
\mathbf{v})=G\text{ \ in }\mathcal{O}\text{,} \\ 
\left. \lambda _{\epsilon }\right\vert _{\partial \mathcal{O}}=0.%
\end{array}%
\right.  \label{bvp}
\end{equation}%
Assume initially that data $G\in H_{0}^{1}(\mathcal{O})$. We note that the
Lax-Milgram Theorem insures the existence of the $H_{0}^{1}(\mathcal{O})$%
-function $\lambda _{\epsilon }$ which solves (\ref{bvp}): Indeed,
multiplying the left hand side of the equation in (\ref{bvp}) by solution
variable $\lambda _{\epsilon }$, integrating, and then integrating by parts,
we have 
\begin{eqnarray}
&&\epsilon \left\Vert \nabla \lambda _{\epsilon }\right\Vert _{\mathcal{O}%
}^{2}+\kappa \left\Vert \lambda _{\epsilon }\right\Vert _{\mathcal{O}}^{2}-%
\frac{1}{2}\int_{\mathcal{O}}\func{div}(\mathbf{v})\left\vert \lambda
_{\epsilon }\right\vert ^{2}d\mathcal{O}+\frac{1}{2}\int_{\mathcal{O}}\func{%
div}(\mathbf{v})\left\vert \lambda _{\epsilon }\right\vert ^{2}d\mathcal{O} 
\notag \\
&=&\epsilon \left\Vert \nabla \lambda _{\epsilon }\right\Vert _{\mathcal{O}%
}^{2}+\kappa \left\Vert \lambda _{\epsilon }\right\Vert _{\mathcal{O}}^{2}.
\label{bvp2}
\end{eqnarray}
Thus we infer $H_{0}^{1}(\mathcal{O})$-ellipticity for the bilinear form
associated with the PDE (\ref{bvp}).

Subsequently, since the bounded domain $\mathcal{O}$ is convex, we can invoke
Theorem 3.2.1.2, p.147, of \cite{grisvard} to have that solution $\lambda
_{\epsilon }\in H^{2}(\mathcal{O})\cap H_{0}^{1}(\mathcal{O})$. This extra
regularity allows for a multiplication of both sides of (\ref{bvp}) by term $%
\Delta \lambda _{\epsilon }$, and a subsequent integration and integration
by parts, so as to yield the relation%
\begin{equation}
-\epsilon \left\Vert \Delta \lambda _{\epsilon }\right\Vert _{\mathcal{O}%
}^{2}-\kappa \left\Vert \nabla \lambda _{\epsilon }\right\Vert _{\mathcal{O}%
}^{2}+\int_{\mathcal{O}}\Delta \lambda _{\epsilon }\left( \mathbf{v}\cdot
\nabla \lambda _{\epsilon }\right) d\mathcal{O+}\frac{1}{2}\int_{\mathcal{O}}%
\func{div}(\mathbf{v})\lambda _{\epsilon }\Delta \lambda _{\epsilon }%
\mathcal{=}\int_{\mathcal{O}}G\Delta \lambda _{\epsilon }d\mathcal{O}.
\label{bvp3}
\end{equation}%
To handle the third term on left hand side: Using classic vector field
identities for the wave equation---see e.g., \cite{morawetz}, \cite{chen},
or \cite{trigg} p.459---we have 
\begin{eqnarray}
\int_{\mathcal{O}}\Delta \lambda _{\epsilon }\left( \mathbf{v}\cdot \nabla
\lambda _{\epsilon }\right) d\mathcal{O} &\mathcal{=}&\int_{\partial 
\mathcal{O}}\frac{\partial \lambda _{\epsilon }}{\partial \mathbf{n}}\mathbf{%
v}\cdot \nabla \lambda _{\epsilon }d\partial \mathcal{O-}\int_{\mathcal{O}%
}\left( \nabla \mathbf{v}\nabla \lambda _{\epsilon }\right) \cdot \nabla
\lambda _{\epsilon }d\mathcal{O}  \notag \\
&&+\frac{1}{2}\int_{\mathcal{O}}\left\vert \nabla \lambda _{\epsilon
}\right\vert ^{2}\func{div}(\mathbf{v})d\mathcal{O}.  \label{bvp4}
\end{eqnarray}%
(In stating this relation, we are using assumption (i) of (\ref{v_a}).) With
respect to the first term on right hand side of (\ref{bvp4}): using
Proposition 4, p.702 of \cite{buffa}, and the fact that $\left. \lambda
_{\epsilon }\right\vert _{\partial \mathcal{O}}=0$, we have on $\partial 
\mathcal{O}$ 
\begin{eqnarray*}
\left. \nabla \lambda _{\epsilon }\right\vert _{\partial \mathcal{O}}
&=&\nabla _{\partial \mathcal{O}}(\left. \lambda _{\epsilon }\right\vert
_{\partial \mathcal{O}})+\mathbf{n}\frac{\partial \lambda _{\epsilon }}{%
\partial \mathbf{n}} \\
&=&\mathbf{n}\frac{\partial \lambda _{\epsilon }}{\partial \mathbf{n}}.
\end{eqnarray*}%
(Above, $\nabla _{\partial \mathcal{O}}(\left. \lambda _{\epsilon
}\right\vert _{\partial \mathcal{O}})\in \mathbf{L}^{2}(\partial \mathcal{O)}
$ denotes the tangential gradient of $\left. \lambda _{\substack{ \epsilon 
\\ }}\right\vert _{\partial \mathcal{O}}$; see \cite{necas} or p.701 of 
\cite{buffa}.) Applying this relation to (\ref{bvp4}), and considering $%
\left. \mathbf{v}\cdot \mathbf{n}\right\vert _{\partial \mathcal{O}}=0$, we
then have 
\begin{equation}
\int_{\mathcal{O}}\Delta \lambda _{\epsilon }\left( \mathbf{v}\cdot \nabla
\lambda _{\epsilon }\right) d\mathcal{O=}-\int_{\mathcal{O}}\left( \nabla 
\mathbf{v}\nabla \lambda _{\epsilon }\right) \cdot \nabla \lambda _{\epsilon
}d\mathcal{O}+\frac{1}{2}\int_{\mathcal{O}}\left\vert \nabla \lambda
_{\epsilon }\right\vert ^{2}\func{div}(\mathbf{v})d\mathcal{O}.
\label{bvp4.5}
\end{equation}

\smallskip

For the fourth term on the left hand side of (\ref{bvp3}): Using Green's
Theorem, we have 
\begin{eqnarray}
\left( \lambda _{\epsilon }\func{div}(\mathbf{v}),\Delta \lambda _{\epsilon
}\right) _{\mathcal{O}} &=&-\left( \nabla \lbrack \lambda _{\epsilon }\func{%
div}(\mathbf{v})],\nabla \lambda _{\epsilon }\right) _{\mathcal{O}}  \notag
\\
&=&-\int_{\mathcal{O}}\left\vert \nabla \lambda _{\epsilon }\right\vert ^{2}%
\func{div}(\mathbf{v})d\mathcal{O-}\left( \lambda _{\epsilon }\nabla \lbrack 
\func{div}(\mathbf{v})],\nabla \lambda _{\epsilon }\right) _{\mathcal{O}}.
\label{bvp5.51}
\end{eqnarray}%
A further integration by parts then yields%
\begin{equation}
\left( \lambda _{\epsilon }\func{div}(\mathbf{v}),\Delta \lambda _{\epsilon
}\right) _{\mathcal{O}}=-\int_{\mathcal{O}}\left\vert \nabla \lambda
_{\epsilon }\right\vert ^{2}\func{div}(\mathbf{v})d\mathcal{O-}\frac{1}{2}%
\int_{\mathcal{O}}\lambda _{\epsilon }^{2}\Delta \func{div}(\mathbf{v})d%
\mathcal{O}.  \label{bvp5.53}
\end{equation}

\medskip

\noindent Applying the relations (\ref{bvp4.5}) and (\ref{bvp5.53}) to (\ref{bvp3}),
we then have 
\begin{equation}
\epsilon \left\Vert \Delta \lambda _{\epsilon }\right\Vert _{\mathcal{O}%
}^{2}+\kappa \left\Vert \nabla \lambda _{\epsilon }\right\Vert _{\mathcal{O}%
}^{2}\mathcal{+}\int_{\mathcal{O}}\left( \nabla \mathbf{v}\nabla \lambda
_{\epsilon }\right) \cdot \nabla \lambda _{\epsilon }d\mathcal{O}+\frac{1}{4}%
\int_{\mathcal{O}}\lambda _{\epsilon }^{2}\Delta \func{div}(\mathbf{v})d%
\mathcal{O=-}\int_{\mathcal{O}}G\Delta \lambda _{\epsilon }d\mathcal{O}\text{%
.}  \label{bvp3.5}
\end{equation}

\medskip

Now, concerning the fourth term on left hand side:%
\begin{equation*}
\left\vert \int_{\mathcal{O}}\lambda _{\epsilon }^{2}\Delta \func{div}(%
\mathbf{v})d\mathcal{O}\right\vert \leq \left\Vert \lambda _{\epsilon
}^{2}\right\Vert _{\mathcal{O}}\left\Vert \Delta \func{div}(\mathbf{v}%
)\right\Vert _{\mathcal{O}};
\end{equation*}%
and subsequently applying H\"{o}lder's inequality with conjugates $p=3/2$
and $p^{\ast }=3$, we have then%
\begin{eqnarray}
\left\vert \int_{\mathcal{O}}\lambda _{\epsilon }^{2}\Delta \func{div}(%
\mathbf{v})d\mathcal{O}\right\vert &\leq &meas(\mathcal{O})^{\frac{1}{6}%
}\left\Vert \lambda _{\epsilon }\right\Vert _{L^{6}(\mathcal{O)}%
}^{2}\left\Vert \Delta \func{div}(\mathbf{v})\right\Vert _{\mathcal{O}} 
\notag \\
&\leq &C_{S}meas(\mathcal{O})^{\frac{1}{6}}\left\Vert \nabla \lambda
_{\epsilon }\right\Vert _{\mathcal{O}}^{2}\left\Vert \Delta \func{div}(%
\mathbf{v})\right\Vert _{\mathcal{O}},  \label{bvp3.6}
\end{eqnarray}%
where positive constant $C_{S}$ is that in (\ref{sob}).

\bigskip

Estimating the relation (\ref{bvp3.5}), by means of (\ref{bvp3.6}), we then
obtain 
\begin{equation*}
\epsilon \left\Vert \Delta \lambda _{\epsilon }\right\Vert _{\mathcal{O}%
}^{2}+\left( \kappa -\left[ \frac{C_{S}}{4}meas(\mathcal{O})^{\frac{1}{6}%
}\left\Vert \Delta \func{div}(\mathbf{v})\right\Vert _{\mathcal{O}%
}+\left\Vert \nabla \mathbf{v}\right\Vert _{L^{\infty }(\mathcal{O})}\right]
\right) \left\Vert \nabla \lambda _{\epsilon }\right\Vert _{\mathcal{O}%
}^{2}\leq \left\Vert \nabla G\right\Vert _{\mathcal{O}}\left\Vert \nabla
\lambda _{\epsilon }\right\Vert _{\mathcal{O}};
\end{equation*}%
and so after using assumption (\ref{v_a})(ii), we arrive at%
\begin{equation}
\epsilon \left\Vert \Delta \lambda _{\epsilon }\right\Vert _{\mathcal{O}%
}^{2}+\frac{k}{2}\left\Vert \nabla \lambda _{\epsilon }\right\Vert _{%
\mathcal{O}}^{2}\leq \left\Vert \nabla G\right\Vert _{\mathcal{O}}\left\Vert
\nabla \lambda _{\epsilon }\right\Vert _{\mathcal{O}}.  \label{bvp5}
\end{equation}%
From this estimate, we obtain 
\begin{equation}
\left\Vert \nabla \lambda _{\epsilon }\right\Vert _{\mathcal{O}}\leq \frac{2%
}{k}\left\Vert \nabla G\right\Vert _{\mathcal{O}}.  \label{bvp6}
\end{equation}%
In turn, applying this uniform bound to (\ref{bvp5}), we have also%
\begin{equation*}
\epsilon \left\Vert \Delta \lambda _{\epsilon }\right\Vert _{\mathcal{O}%
}\leq \sqrt{\frac{2\epsilon }{k}}\left\Vert \nabla G\right\Vert _{\mathcal{O}%
}.
\end{equation*}

\smallskip

Consequently, there exists a subsequence $\left\{ \lambda _{\epsilon
}\right\} $ and function $\lambda \in H_{0}^{1}(\mathcal{O})$ such that 
\begin{equation}
\begin{array}{l}
\text{(i) }\displaystyle \lim_{\epsilon \rightarrow 0}\lambda _{\epsilon }=\lambda \text{ 
\emph{weakly} in }H_{0}^{1}(\mathcal{O})\text{ and \emph{strongly} in }L^{2}(%
\mathcal{O}); \\ 
\text{(ii) }\displaystyle \lim_{\epsilon \rightarrow 0}\epsilon \Delta \lambda _{\epsilon
}=0%
\end{array}
\label{converge}
\end{equation}
(see e.g., Theorem 3.27(ii), p.87, of \cite{Mc}).

\smallskip

With the convergences above in hand, we multiply the (\ref{bvp}) by test
function $\varphi \in H^{1}(\mathcal{O})$, and integrate. (Recall that each $%
\lambda _{\epsilon }$ is a strong solution of (\ref{bvp}).) This gives the
relation 
\begin{equation}
\epsilon \int_{\mathcal{O}}\Delta \lambda _{\epsilon }\varphi d\mathcal{O}%
+k\int_{\mathcal{O}}\lambda _{\epsilon }\varphi d\mathcal{O-}\frac{1}{2}%
\int_{\mathcal{O}}\lambda _{\epsilon }\func{div}(\mathbf{v})\varphi d%
\mathcal{O-}\int_{\mathcal{O}}\lambda _{\epsilon }\mathbf{v}\cdot \nabla
\varphi d\mathcal{O}=\int_{\mathcal{O}}G\varphi d\mathcal{O}\text{ \ for
every }\varphi \in H^{1}(\mathcal{O}).  \label{var2}
\end{equation}%
Passing to the limit on left hand side, and invoking (\ref{converge}) we
have that strong $L^{2}$-limit $\lambda $ satisfies (\ref{var}).

\medskip

Moreover, from the relations (\ref{bvp}) and (\ref{bvp2}) for the
regularized problem, and the strong limit posted in (\ref{converge}), we
have the estimate%
\begin{equation}
k\left\Vert \lambda \right\Vert _{\mathcal{O}}\leq \left\Vert G\right\Vert _{%
\mathcal{O}}.  \label{est_f}
\end{equation}

\smallskip

In sum: we have justified the existence of a operator $\mathcal{L}_{0}$,
say, which satisfies, for given $G\in H_{0}^{1}(\mathcal{O})$, $\mathcal{L}%
_{0}(G)=\lambda $, where $\lambda \in L^{2}(\mathcal{O})$ solves (\ref{var}%
), and which yields the estimate 
\begin{equation}
\left\Vert \mathcal{L}_{0}G\right\Vert _{\mathcal{O}}\leq \frac{1}{k}%
\left\Vert G\right\Vert _{\mathcal{O}},  \label{inher}
\end{equation}%
after using (\ref{est_f}). An extension by continuity now yields the
solvability of equation (\ref{linear}) for given $L^{2}$-data $G$, and this
solvability is unique because of the inherent dissipativity in this
equation. Lastly, the estimate (\ref{est}) is just (\ref{inher}).\end{proof}


\begin{thebibliography}{99}
\bibitem{spectral} Aoyama, R. and Kagei, Y., 2016. Spectral properties of
the semigroup for the linearized compressible Navier-Stokes equation around
a parallel flow in a cylindrical domain. \emph{Advances in Differential
Equations}, 21(3/4), pp.265--300.

\bibitem{aubin} Aubin, J.P., 2011. \emph{Applied functional analysis} (Vol.
47). John Wiley \& Sons.

\bibitem{george1} Avalos, G. and Bucci, F., 2014. Exponential decay
properties of a mathematical model for a certain fluid-structure
interaction. In \emph{New Prospects in Direct, Inverse and Control Problems
for Evolution Equations} (pp.49--78). Springer International Publishing.

\bibitem{george2} Avalos, G. and Bucci, F., 2015. Rational rates of uniform
decay for strong solutions to a fluid-structure PDE system. \emph{Journal of
Differential Equations}, 258(12), pp.4398--4423.

\bibitem{clark} Avalos, G. and Clark, T., 2014. A Mixed Variational
Formulation for the Wellposedness and Numerical Approximation of a PDE Model
Arising in a 3-D Fluid-Structure Interaction, \emph{Evolution Equations and
Control Theory}, 3(4), pp.557--578.

\bibitem{dvorak} Avalos, G. and Dvorak, M., 2008. A New Maximality Argument
for a Coupled Fluid-Structure Interaction, with Implications for a
Divergence Free Finite Element Method, \emph{Applicationes Mathematicae},
35(3), pp.259--280.

\bibitem{preprint} Avalos, G. and Geredeli, P.G., 2017. Spectral analysis
and uniform decay rates for a compressible flow-structure PDE model, \emph{%
preprint}.

\bibitem{T1} Avalos G. and Triggiani R., 2007. The Coupled PDE System
Arising in Fluid-Structure Interaction, Part I: Explicit Semigroup Generator
and its Spectral Properties, \emph{Contemporary Mathematics}, {440},
pp.15--54.

\bibitem{T2} Avalos G. and Triggiani R., 2009. Semigroup Wellposedness in
The Energy Space of a Parabolic-Hyperbolic Coupled Stokes-Lam\'{e} PDE of
Fluid-Structure Interactions, \emph{Discrete and Continuous Dynamical Systems%
}, 2(3), pp.417--447.

\bibitem{BA62} Bisplinghoff, R.L. and Ashley, H., 2013. \emph{Principles of
aeroelasticity}. Courier Corporation.

\bibitem{bolotin} Bolotin, V.V., 1963. \emph{Nonconservative problems of the
theory of elastic stability}. Macmillan.

\bibitem{buffa} Buffa, A. and Geymonat, G., 2001. On traces of functions in $%
W^{2,p}(\Omega )$ for Lipschitz domains in R3. \emph{Comptes Rendus de l'Acad%
\'{e}mie des Sciences-Series I-Mathematics}, 332(8), pp.699--704.

\bibitem{buffa2} Buffa, A., Costabel, M. and Sheen, D., 2002. On traces for $%
\mathbf{H}(\func{curl},\Omega )$ in Lipschitz domains. \emph{Journal of
Mathematical Analysis and Applications}, 276(2), pp.845--867.

\bibitem{chen} Chen, G., 1979. Energy decay estimates and exact
boundary-value controllability for the wave-equation in a bounded domain. 
\emph{Journal de Math\'{e}matiques Pures et Appliqu\'{e}es}, 58(3),
pp.249--273.

\bibitem{chorin-marsden} Chorin, A.J. and Marsden, J.E., 1990. \emph{A
mathematical introduction to fluid mechanics} (Vol. 3). New York: Springer.

\bibitem{Chueshov} Chueshov, I., 1999, Introduction to the Theory of
Infinite-Dimensional Dissipative Systems. Acta, Kharkov, (in Russian);
English translation: 2002, Acta, Kharkov.

\bibitem{Igor-note} Chueshov, I., 2013, Personal communication.

\bibitem{Chu2013-comp} Chueshov, I., 2014. Dynamics of a nonlinear elastic
plate interacting with a linearized compressible viscous fluid. \emph{%
Nonlinear Analysis: Theory, Methods \& Applications}, 95, pp.650--665.

\bibitem{Chu2013-inviscid} Chueshov, I., 2014. Interaction of an elastic
plate with a linearized inviscid incompressible fluid. \emph{Communications
on Pure \& Applied Analysis}, 13(5), pp.1459--1778.

\bibitem{newigor} Chueshov, I., 2015. \emph{Dynamics of Quasi-Stable
Dissipative Systems}. New York: Springer.

\bibitem{tube} Chueshov, I. and Fastovska, T., 2016. On interaction of
circular cylindrical shells with a Poiseuille type flow. \emph{Evolution
Equations \& Control Theory}, 5(4), pp.605--629.

\bibitem{springer} Chueshov I. and Lasiecka I., 2010. \emph{Von Karman
Evolution Equations}. {Springer-Verlag}.

\bibitem{supersonic} Chueshov, I., Lasiecka, I. and Webster, J.T., 2013.
Evolution semigroups in supersonic flow-plate interactions. \emph{Journal of
Differential Equations}, 254(4), pp.1741--1773.

\bibitem{delay} Chueshov, I., Lasiecka, I. and Webster, J.T., 2014.
Attractors for Delayed, Nonrotational von Karman Plates with Applications to
Flow-Structure Interactions Without any Damping. \emph{Communications in
Partial Differential Equations}, 39(11), pp.1965--1997.

\bibitem{dcds} Chueshov, I., Lasiecka, I. and Webster, J.T., 2014.
Flow-plate interactions: Well-posedness and long-time behavior. \emph{%
Discrete \& Continuous Dynamical Systems-Series S}, 7(5), pp.925--965.

\bibitem{cr-full-karman} Chueshov, I. and Ryzhkova, I., 2013. Unsteady
interaction of a viscous fluid with an elastic shell modeled by full von
Karman equations. \emph{Journal of Differential Equations}, 254(4),
pp.1833--1862.

\bibitem{ChuRyz2012-pois} Chueshov, I. and Ryzhkova, I., 2013. On the
interaction of an elastic wall with a poiseuille-type flow. \emph{Ukrainian
Mathematical Journal}, 65(1), pp.158--177.

\bibitem{ChuRyz2011} Chueshov, I. and Ryzhkova, I., 2013. A global attractor
for a fluid-plate interaction model. \emph{Communications on Pure \& Applied
Analysis}, 12(4), pp.1635--1656.

\bibitem{berlin11} Chueshov, I. and Ryzhkova, I., 2011, September.
Well-posedness and long time behavior for a class of fluid-plate interaction
models. In \emph{IFIP Conference on System Modeling and Optimization} (pp.
328--337). Springer Berlin Heidelberg.

\bibitem{dV} da Veiga, H.B., 1985. \emph{Stationary Motions and
Incompressible Limit for Compressible Viscous Fluids}, \emph{Houston Journal
of Mathematics}, 13(4), pp.527--544.

\bibitem{dowell1} E. Dowell, 2004. \emph{A Modern Course in Aeroelasticity}. 
{Kluwer Academic Publishers}.

\bibitem{gw} Geredeli, P.G. and Webster, J.T., 2016. Qualitative results on
the dynamics of a Berger plate with nonlinear boundary damping. \emph{%
Nonlinear Analysis: Real World Applications}, 31, pp.227--256.

\bibitem{grisvard} Grisvard, P., 2011. \emph{Elliptic problems in nonsmooth
domains}. Society for Industrial and Applied Mathematics.

\bibitem{kato} Kato, T., 2013. \emph{Perturbation theory for linear operators%
} (Vol. 132). Springer Science \& Business Media.

\bibitem{kesavan} Kesavan, S., 1989. \emph{Topics in functional analysis and
applications}.

\bibitem{conequil2} Lasiecka, I. and Webster, J.T., 2016. Feedback
stabilization of a fluttering panel in an inviscid subsonic potential flow. 
\emph{SIAM Journal on Mathematical Analysis}, 48(3), pp.1848--1891.


\bibitem{LaxPhil} Lax, P.D. and Phillips, R.S., 1960. Local boundary
conditions for dissipative symmetric linear differential operators. \emph{\
Communications on Pure and Applied Mathematics}, 13(3), pp.427--455.

\bibitem{Mc} McLean, W.C.H., 2000. \emph{Strongly elliptic systems and
boundary integral equations}. Cambridge university press.

\bibitem{morawetz} Morawetz, C.S., 1966. Energy identities for the wave
equation, NYU Courant Institute, \emph{Math. Sci. Res. Rep. No.} IMM 346.

\bibitem{necas} Ne\v{c}as, 2012. Direct Methods in the Theory of Elliptic
Equations (translated by Gerard Tronel and Alois Kufner), Springer, New York.

\bibitem{pazy} Pazy, A., 2012. \emph{Semigroups of linear operators and
applications to partial differential equations} (Vol. 44). Springer Science
\& Business Media.


\bibitem{webster} Webster, J.T., 2011. Weak and strong solutions of a
nonlinear subsonic flow-structure interaction: Semigroup approach. \emph{%
Nonlinear Analysis: Theory, Methods \& Applications}, 74(10), pp.3123--3136.

\bibitem{trigg} Triggiani, R., 1989. Wave equation on a bounded domain with
boundary dissipation: an operator approach. \emph{Journal of Mathematical
Analysis and applications}, 137(2), pp.438--461.

\bibitem{valli} Valli, A., 1987. On the existence of stationary solutions to
compressible Navier-Stokes equations. In \emph{Annales de l'IHP Analyse non
lin\'{e}aire} (Vol. 4, No. 1, pp.99--113).
\end{thebibliography}
\end{document}